\theoremstyle{definition}
\newtheorem{theorem}{Theorem}[section]
\newtheorem{definition}[theorem]{Definition}
\newtheorem{proposition}[theorem]{Proposition}
\newtheorem{lemma}[theorem]{Lemma}
\newtheorem{remark}[theorem]{Remark}
\numberwithin{equation}{section}
\numberwithin{table}{section}
\newcommand{\NN}{\mathbb{N}}
\newcommand{\ZZ}{\mathbb{Z}}
\newcommand{\QQ}{\mathbb{Q}}
\newcommand{\RR}{\mathbb{R}}
\renewcommand{\AA}{\mathbb{A}}
\newcommand{\CC}{\mathbb{C}}
\newcommand{\HH}{\mathbb{H}}
\newcommand{\class}{\text{cls}}
\newcommand{\genus}{\text{gen}}
\newcommand{\mass}{m}
\newcommand{\ord}{\text{ord}}
\newcommand{\legendre}[2]{%
  \genfrac{(}{)}{}{}{#1}{#2}%
  \if\relax\detokenize{}\relax\else_{\!}\fi
}
\newcommand{\lcm}{\text{lcm}}
\renewcommand{\pmod}[1]{\text{~(mod }#1\text{)}}
\newcommand{\imag}{\operatorname{Im}}
\newcommand{\real}{\text{Re}}
\newcommand{\SL}{\text{SL}}
\newcommand{\dual}{\#}
\newcommand{\shiftinverse}[2]{#1(#2)}
\newcommand{\td}{\mathfrak{t}_d}
\newcommand{\Constant}{\Gamma}
\newif\ifdefs
\newif\ifdiscs
\newif\ifold
\newif\ifnew
\title{On finiteness theorems for sums of generalized polygonal numbers}
\author{Ben Kane}
\address{Department of Mathematics, University of Hong Kong, Pokfulam, Hong Kong}
\email{bkane@hku.hk}
\author{Zichen Yang}
\address{Department of Mathematics, University of Hong Kong, Pokfulam, Hong Kong}
\email{zichenyang.math@gmail.com}
\date{\today}
\keywords{Shifted lattices, sums of polygonal numbers, theta series}
\subjclass[2020]{11E25, 11F27, 11F30}
\thanks{The first author was supported by grants from the Research Grants Council of the Hong Kong SAR, China (project numbers HKU 17303618 and 17314122).}
\begin{document}
\begin{abstract}
In this paper, we consider mixed sums of generalized polygonal numbers. Specifically, we obtain a finiteness condition for universality of such sums; this means that it suffices to check representability of a finite subset of the positive integers in order to conclude that the sum of generalized polygonal numbers represents every positive integer. The sub-class of sums of generalized polygonal numbers which we consider is those sums of $m_j$-gonal numbers for which $\lcm(m_1-2,\dots,m_{r}-2)\leq \mathfrak{M}$ and we obtain a bound on the asymptotic growth of a constant $\Constant_{\mathfrak{M}}$ such that it suffices to check the representability condition for $n\leq \Constant_{\mathfrak{M}}$.
\end{abstract}
\maketitle

\section{Introduction}

A milestone in the arithmetic theory of quadratic forms is the Conway--Schneeberger $15$-theorem \cite{conway1999universal}, which states that a positive-definite integral quadratic form represents every positive integer if and only if it represents every positive integer up to $15$. Besides its elegant statement, it provides a satisfactory classification of \begin{it}universal\end{it} positive-definite integral quadratic forms, which are quadratic forms representing every positive integer. A theorem of this type is usually called a finiteness theorem.

Motivated by this theorem, it is natural to generalize the finiteness result to quadratic polynomials. In particular, we are interested in sums of generalized polygonal numbers. For an integer $m\geq3$ and $x\in\ZZ$, we define the $x$-th \begin{it}generalized $m$-gonal number\end{it} to be
$$
P_m(x)\coloneqq\frac{(m-2)x^2-(m-4)x}{2}.
$$
A \textit{sum of generalized polygonal numbers} is a function $F\colon\ZZ^r\to\ZZ$ of the form 
$$
F(x_1,\ldots,x_r)=a_1P_{m_1}(x_1)+\cdots+a_rP_{m_r}(x_r)
$$
with integer parameters $1\leq a_1\leq\cdots\leq a_r$ and $m_1,\ldots,m_r\geq3$. If $m_1=\cdots=m_r=m$ for some integer $m\geq3$, the sum $F$ is called a \textit{sum of $m$-gonal numbers}. We say that a positive integer $n$ is represented by a sum $F$, if there exist $x_1,\ldots,x_r\in\ZZ$ such that $n=F(x_1,\ldots,x_r)$. Such a tuple $(x_1,\ldots,x_r)\in\ZZ^r$ is called a \textit{representation} of $n$ by $F$. We denote $r_F(n)$ the number of representations of $n$ by $F$. If every positive integer is represented by $F$, we say that $F$ is \textit{universal}.

A number of studies are devoted to prove finiteness theorems for sums of $m$-gonal numbers. Indeed, by constructing Bhargava's escalator tree \cite{bhargava2000}, one can prove that for any integer $m\geq3$, there exists a minimal constant $\gamma_m>0$ such that every sum of $m$-gonal numbers is universal if and only if it represents every positive integers up to $\gamma_m$ \cite[Lemma 2.1]{kane2020universal}. For small values of the parameter $m$, the constant $\gamma_m$ has been determined. To name a few, Bosma and the first author \cite{bosma2013triangular} showed that $\gamma_3=\gamma_6=8$. The next case $\gamma_4=15$ is a consequence of Conway--Schneeberger's $15$-theorem. For $m=5$, Ju \cite{ju2020universal} showed that $\gamma_5=109$. For $m=7$, Kamaraj, the first author, and Tomiyasu provided an upper bound for $\gamma_7$ in \cite{kamaraj2022universal}. For $m=8$, Ju and Oh \cite{ju2018universal} proved that $\gamma_8=60$. In a recent paper, the first author and Liu \cite[Theorem 1.1(1)]{kane2020universal} studied the growth of $\gamma_m$ as $m\to\infty$. More precisely, they proved that for any real number $\varepsilon>0$, we have
\begin{equation}
\label{eqn::previousbound}
m-4\leq\gamma_m\ll_{\varepsilon}m^{7+\varepsilon}
\end{equation}
where the implied constant is effective. Later, Kim and Park \cite{kim2020finiteness} improved the upper bound by showing that the growth of $\gamma_m$ is at most linear, giving lower and upper bounds for $\gamma_m$ that are both linear, although the constant or proportionality for the upper bound is not explicitly known.

Generally, for a class $\mathcal{C}$ of sums of generalized polygonal numbers, one can construct the corresponding escalator tree $T_{\mathcal{C}}$ and try to prove a finiteness theorem with a constant $\Constant_{\mathcal{C}}>0$ depending on the class $\mathcal{C}$. For example, the classes considered above are those consisting of sums of $m$-gonal numbers for a fixed integer $m\geq3$. The most general class one could consider is the class $\mathcal{C}_{\infty}$ containing all sums of generalized polygonal numbers with arbitrary parameters $m_1\ldots,m_r\geq3$. However, such a uniform constant $\Constant_{\infty}$ does not exist because by (\ref{eqn::previousbound}) we have an arbitrarily large lower bound $\Constant_{\infty}\geq\gamma_m\geq m-4$.

It is thus natural to consider subclasses of $\mathcal{C}_{\infty}$. For any integer $\mathfrak{M}\geq1$, we consider the class $\mathcal{C}_{\mathfrak{M}}$ consisting of sums of generalized polygonal numbers with parameters $m_1,\ldots,m_r\geq3$ such that $\lcm(m_1-2,\ldots,m_r-2)\leq\mathfrak{M}$. The reason we consider $\lcm(m_1-2,\ldots,m_r-2)$ rather than an a priori more natural choice $\lcm(m_1,\ldots,m_r)$ will become apparent later on. For these classes, we have the following finiteness theorems.

\begin{theorem}
\label{thm::mainfinitenesstheorem1}
For any integer $\mathfrak{M}\geq1$, there exists a minimal constant $\Constant_{\mathfrak{M}}>0$ such that every sum of generalized polygonal numbers in $\mathcal{C}_{\mathfrak{M}}$ is universal if and only if it represents every positive integer up to $\Constant_{\mathfrak{M}}$. Moreover, for any real number $\varepsilon>0$, we have
$$
\Constant_{\mathfrak{M}}\ll_{\varepsilon}\mathfrak{M}^{43+\varepsilon},
$$
where the implied constant is ineffective.
\end{theorem}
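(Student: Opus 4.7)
The plan is to combine a Bhargava-style escalator-tree construction with analytic estimates for theta series on shifted lattices. I would first build the escalator tree $T_{\mathfrak{M}}$ for the class $\mathcal{C}_{\mathfrak{M}}$ in the standard way: the root is the empty sum, and each non-universal node $F$ has as children all enlargements $F+aP_m(x)$ with the condition $\lcm(m_1-2,\ldots,m_{r+1}-2)\le\mathfrak{M}$ preserved and the coefficient $a$ bounded by the truant $t(F)$ of $F$, chosen so that the child represents $t(F)$. Producing the theorem then reduces to showing that the truants $t(F)$ along every branch remain $\ll_{\varepsilon}\mathfrak{M}^{43+\varepsilon}$, since $\Constant_{\mathfrak{M}}$ is exactly the largest truant appearing anywhere in the tree.

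At each node $F=\sum_j a_j P_{m_j}(x_j)$ with $M=\lcm(m_1-2,\ldots,m_r-2)\le\mathfrak{M}$, I would convert the representation problem into one for a shifted lattice via the identity $8(m-2)P_m(x)=(2(m-2)x-(m-4))^2-(m-4)^2$. Setting $y_j=2(m_j-2)x_j-(m_j-4)$, the equation $F(x)=n$ becomes $Q(y)=8Mn+C$, where $Q(y)=\sum_j\frac{Ma_j}{m_j-2}\,y_j^{2}$ is an integral diagonal form (each $\frac{Ma_j}{m_j-2}$ is an integer because $M=\lcm(m_j-2)$, and this is precisely where the choice of $\lcm(m_j-2)$ over $\lcm(m_j)$ pays off), $C=\sum_j\frac{Ma_j(m_j-4)^2}{m_j-2}$ is an additive constant, and $y$ runs over the shifted lattice $L+\boldsymbol{\nu}$ cut out by $y_j\equiv-(m_j-4)\pmod{2(m_j-2)}$. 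The generating function $\sum_n r_F(n)q^n$ is thus, up to twisting, the theta series of $L+\boldsymbol{\nu}$, a modular form of weight $r/2$ on some $\Gamma_0(N)$ whose level $N$ is bounded by an explicit polynomial in $\mathfrak{M}$ and the coefficients $a_j$.

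The analytic core is to decompose $\theta_{L+\boldsymbol{\nu}}=E+S$ into its Eisenstein and cuspidal projections and compare Fourier coefficients. I would apply a Siegel-type lower bound $a_E(8Mn+C)\gg_{\varepsilon} N^{-A}n^{r/2-1-\varepsilon}$ valid for $n$ locally represented by the genus, which is the sole source of ineffectivity, together with a Deligne-based upper bound $a_S(8Mn+C)\ll_{\varepsilon} N^{B}n^{r/4+\varepsilon}$ coming from the Deligne bound on newform Fourier coefficients (or its half-integral weight substitute) together with explicit control of Petersson norms of the newform components of $S$. Once $r\ge 4$, comparing these two bounds shows that $F$ represents every $n$ past an explicit polynomial threshold in $N$, hence in $\mathfrak{M}$; the small-$r$ cases ($r\le 3$) are then handled by the combinatorial escalator arguments already in the literature on sums of $m$-gonal numbers. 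The exponent $43$ would emerge from summing the level-exponent $A+B$ from the crossover inequality, the polynomial dependence of $N$ on the $a_j$, and the effect of escalating through enough depth to reach the $r\ge 4$ regime. The main obstacle I anticipate is making the dependence of both the Eisenstein and cuspidal Fourier-coefficient bounds on the level $N$ and on the shift $\boldsymbol{\nu}$ fully explicit, since the classical theta-series literature usually addresses only unshifted integral lattices, and one must carefully track how bounds on Petersson norms of newforms translate into bounds on Fourier coefficients restricted to the arithmetic progression determined by $L+\boldsymbol{\nu}$.
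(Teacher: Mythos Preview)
Your overall strategy---escalator tree, conversion to shifted lattices via the completing-the-square identity, theta-series decomposition into Eisenstein plus cusp form, and comparison of their Fourier coefficients---matches the paper's approach. However, you have misidentified the source of the ineffectiveness. You attribute it to the Siegel-type Eisenstein lower bound, but in rank $r\ge 5$ that bound is entirely effective: the paper computes the local densities explicitly via Gauss-sum formulae and bounds them away from zero by effective constants. The cuspidal upper bound is likewise effective. The actual ineffectiveness is combinatorial and arises at depth~$3$ of the tree: there are finitely many ternary nodes that appear (numerically) to be universal, hence to be leaves, but whose universality cannot be established with current methods. If such a node is in fact not universal, its truant exists but is unknown; this unknown truant then bounds the coefficient $a_4$ of its children only by an implicit constant, which propagates into the level $N$ and hence into the analytic crossover threshold. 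Your sentence ``the small-$r$ cases ($r\le 3$) are then handled by the combinatorial escalator arguments already in the literature'' is exactly where the real obstruction lives, and it is not handled in the literature.

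A second gap is that you treat the Eisenstein lower bound as a black box of the form $a_E\gg_\varepsilon N^{-A}n^{r/2-1-\varepsilon}$ for locally represented $n$, but this requires showing that the product of local densities is bounded below uniformly over all nodes in the tree. The paper devotes substantial effort to this (explicit local-density formulae plus case analysis in their Propositions~4.5 and~4.6), and the argument genuinely uses structural features of the escalator tree---in particular, Legendre-symbol constraints forced by the fact that each node represents all integers below its truant---to rule out bad local behaviour. Without this, some $\beta_p$ could be zero or exponentially small. Finally, the paper applies the analytic comparison only from depth $r\ge 5$, not $r\ge 4$; depth~$4$ is handled by a further finite enumeration, which again inherits ineffectiveness from the unresolved ternary nodes.
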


\begin{remark}
\label{rmk::ineffective}
With the current technology, the ineffectiveness is inevitable because there are finitely many ternary sums appearing in the escalator tree that are highly likely to be universal sums but we have no idea to prove the universality of all of them. The technical difficulty was extensively summarized in \cite{ono1997ramanujan}. There are also potentially universal quaternary sums in the escalator tree. They are easier to deal with but we do not work out the details for these because the ineffectiveness would remain due to the lack of classification of universal ternary sums. Furthermore, it is worth noting that an additional difficulty when dealing with quaternary sums is the existence of infinitely many universal quaternary sums in the escalator tree. For example, assuming the truth of various generalized Riemann hypotheses, the first author proved that every positive integer except $23$ is represented by $P_4+2P_4+3P_3$ in \cite[Theorem 1.8]{kane2009two}. Therefore any child of $P_4+2P_4+3P_3$ in the escalator tree is universal. For details, see the construction of the escalator tree below.
\end{remark}

\begin{remark}
Since $\mathcal{C}_{1}$ is exactly the class of sums of triangular numbers, we know that $\Constant_{1}=8$. The class $\mathcal{C}_{2}$ is the class of mixed sums of triangular numbers and squares. Because ternary sums in $\mathcal{C}_{2}$ were classified in \cite{sun2007mixed,guo2007mixed,oh2009mixed}, the constant $\Constant_{2}$ is effective in principle. The second author is going to make it explicit in a forthcoming paper. The constant $\Constant_{\mathfrak{M}}$ is ineffective for any integer $\mathfrak{M}\geq3$.
\end{remark}

Remark \ref{rmk::ineffective} leads to the following conditional result.

\begin{theorem}
\label{thm::mainfinitenesstheorem2}
Assuming that the classification of universal ternary sums has been completed, the implied constants in Theorem \ref{thm::mainfinitenesstheorem1} can be made effective.
\end{theorem}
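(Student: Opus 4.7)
The plan is to revisit the proof of Theorem \ref{thm::mainfinitenesstheorem1} node-by-node on the escalator tree $T_{\mathcal{C}_{\mathfrak{M}}}$ and verify that, once the classification of universal ternary sums is available, every estimate used can be made explicit. Recall the structure of the argument: at each node $F$ of the tree, we either certify that $F$ is universal (so the branch terminates) or we exhibit the \emph{truant} $t(F)$ -- the smallest positive integer that $F$ fails to represent -- and then escalate $F$ by appending a new summand $a P_m(x)$ with $a\leq t(F)$ and $m-2 \mid \mathfrak{M}$. The bound $\Constant_{\mathfrak{M}}\ll_\varepsilon\mathfrak{M}^{43+\varepsilon}$ follows from a uniform (in $\mathfrak{M}$) control on the depth of the tree together with an analytic control on the truants. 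The only step in this pipeline that is ineffective in the proof of Theorem \ref{thm::mainfinitenesstheorem1} is the analytic treatment of ternary nodes, and it is precisely this step that the classification hypothesis replaces by a finite lookup.

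At any quaternary or higher-arity node $F$, the standard circle method produces a decomposition $\theta_F=E_F+g_F$, where the Eisenstein part $E_F$ has positive Fourier coefficients bounded below by an explicit product of local densities (effective in the level, hence in $\mathfrak{M}$), and the cusp part $g_F$ has Fourier coefficients bounded above through Deligne's estimate, which is \emph{effective} in both $n$ and the level. Thus for quaternary nodes the argument that $F$ represents every integer past an explicit threshold is already effective in Theorem \ref{thm::mainfinitenesstheorem1}; the ineffectiveness flagged in Remark \ref{rmk::ineffective} comes from the fact that, when a quaternary sum is not yet known to be universal, one must continue escalating, thereby re-encountering ternary sections whose universality status is unknown without classification.

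For ternary nodes, by contrast, the original proof relies on bounds of Duke--Iwaniec type on weight-$3/2$ cusp form coefficients, which go through Waldspurger's formula and, ultimately, through Siegel's lower bound for quadratic $L$-values at $s=1$; this is the source of the ineffective implied constant. Under the hypothesis of Theorem \ref{thm::mainfinitenesstheorem2} we bypass this entirely: for each ternary sum $F$ appearing in $T_{\mathcal{C}_{\mathfrak{M}}}$ we consult the classification to decide universality, and if $F$ is not universal we read off an explicit (and, crucially, effectively bounded) truant $t(F)$. Combining this with the effective quaternary analysis described above gives an explicit bound on every truant in the tree in terms of $\mathfrak{M}$, and hence an effective form of the inequality $\Constant_{\mathfrak{M}}\ll_\varepsilon\mathfrak{M}^{43+\varepsilon}$.

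The main obstacle in executing this plan is bookkeeping: one must re-examine each estimate in the proof of Theorem \ref{thm::mainfinitenesstheorem1}, verify that the only appeal to an ineffective ingredient is at the ternary stage, and confirm that the replacement by the classification preserves the polynomial-in-$\mathfrak{M}$ tracking (levels are divisors of $4\mathfrak{M}$, weights are $3/2$ or $2$, and the coefficients $a_j$ are controlled by the truants of ancestors). A subsidiary subtlety, already noted in Remark \ref{rmk::ineffective}, is the presence of infinitely many universal quaternary sums such as the descendants of $P_4+2P_4+3P_3$; however, once a parent is certified universal (effectively, via the classification of its ternary escalation branches or via the direct analytic bound), all of its children are automatically universal and form leaves, so this phenomenon only shortens the analysis rather than obstructing it.
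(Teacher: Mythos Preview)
Your high-level conclusion is right---once the ternary classification is known, the proof of Theorem~\ref{thm::mainfinitenesstheorem1} becomes effective---but your account of \emph{where} the ineffectiveness lives in that proof is inaccurate on several points, and this matters because the theorem is precisely a claim about the structure of that proof.

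First, the paper never applies Duke--Iwaniec, Waldspurger, or Siegel's lower bound to ternary nodes. Ternary nodes are handled entirely by the elementary/computational Proposition~\ref{thm::truant3}: a computer search isolates $3052$ candidates and bounds the truant of every other ternary node by $644$. The ineffectiveness is purely combinatorial: for each of those $3052$ candidates we do not know whether it is universal (a leaf) or, if not, what its truant is; consequently the bound on $a_4$ for their children, and hence the bound in Proposition~\ref{thm::truant4}(1), is implicit. No analytic ingredient of weight $3/2$ enters, so there is nothing to replace by a ``finite lookup'' in the way you describe.

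Second, the analytic comparison of Eisenstein and cuspidal parts is \emph{not} applied at depth~$4$ in the paper: Propositions~\ref{thm::almostallbound}, \ref{thm::nondyadicbound}, \ref{thm::dyadicbound}, and \ref{thm::nodelowerbound} all require $r\geq 5$. Quaternary nodes are treated by Proposition~\ref{thm::truant4}, again a finite computation, whose implicit constant in part~(1) is inherited from the unknown ternary truants via Lemma~\ref{thm::observation}. So your sentence ``for quaternary nodes the argument \ldots\ is already effective in Theorem~\ref{thm::mainfinitenesstheorem1}'' misstates what the paper does. Relatedly, your explanation that escalating a quaternary sum ``re-encounters ternary sections'' is backwards: escalation increases depth, and the dependence flows forward (ternary truants bound $a_4$; quaternary truants bound $a_5$; then the analytic method at depth~$5$ takes over).

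The paper's actual argument is the one-line observation at the end of the combined proof: the only implicit input to the depth-$5$ analytic bound is the constant controlling truants at depth $\leq 4$ (Propositions~\ref{thm::truant3} and~\ref{thm::truant4}), and that constant becomes explicit once the $3052$ ternary candidates are classified. You should also note, as Remark~\ref{rmk::ineffective} does, that there remain finitely many potentially universal quaternary nodes in Proposition~\ref{thm::truant4}(2); the paper asserts these are ``easier to deal with'' but does not carry out the details, so a complete effective proof still requires addressing them (e.g.\ by an effective $r=4$ analytic bound or a further finite check), which your proposal should acknowledge rather than absorb into an incorrect description of the existing proof.
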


In fact, a conjectural list of $3052$ universal ternary sums can be determined via a computer search, where sums involving $P_6$ are excluded because $P_3$ and $P_6$ represent the same set of integers. Although it seems impossible to prove the universality of all of them, there are $197$ ternary sums among them that are confirmed to be universal, by a non-exhaustive search of relevant literature and by algebraic methods. The confirmed cases are listed in Table \ref{tbl::confirmedcases} and the complete data of the conjectural list can be found in the thesis of the second author.

{\footnotesize\setlength{\tabcolsep}{1pt}
\begin{longtable}{llllllllll}
\caption{The list of ternary sums that are confirmed to be universal.}
\label{tbl::confirmedcases}\\
\toprule
$P_{3}+P_{3}+P_{3}$: &\cite{liouville1863nouveaux} & $P_{3}+P_{3}+P_{4}$: &\cite{sun2007mixed} & $P_{3}+P_{3}+P_{5}$: &\cite{sun2015universal} & $P_{3}+P_{3}+P_{7}$: &\cite{sun2015universal} & $P_{3}+P_{3}+P_{8}$: &\cite{sun2015universal}\\
$P_{3}+P_{3}+P_{10}$: &\cite{sun2015universal} & $P_{3}+P_{3}+P_{11}$ & & $P_{3}+P_{3}+P_{12}$: &\cite{ju2019ternary} & $P_{3}+P_{3}+2P_{3}$: &\cite{liouville1863nouveaux} & $P_{3}+P_{3}+2P_{4}$: &\cite{sun2007mixed}\\
$P_{3}+P_{3}+2P_{5}$: &\cite{sun2015universal} & $P_{3}+P_{3}+2P_{8}$ & & $P_{3}+P_{3}+3P_{5}$ & & $P_{3}+P_{3}+4P_{3}$: &\cite{liouville1863nouveaux} & $P_{3}+P_{3}+4P_{4}$: &\cite{sun2007mixed}\\
$P_{3}+P_{3}+4P_{5}$: &\cite{sun2015universal} & $P_{3}+P_{3}+5P_{3}$: &\cite{liouville1863nouveaux} & $P_{3}+P_{4}+P_{4}$: &\cite{sun2007mixed} & $P_{3}+P_{4}+P_{5}$: &\cite{sun2015universal} & $P_{3}+P_{4}+P_{7}$: &\cite{sun2015universal}\\
$P_{3}+P_{4}+P_{8}$: &\cite{sun2015universal} & $P_{3}+P_{4}+P_{9}$: &\cite{ju2019ternary} & $P_{3}+P_{4}+P_{10}$: &\cite{sun2015universal} & $P_{3}+P_{4}+P_{11}$: &\cite{sun2015universal} & $P_{3}+P_{4}+P_{12}$: &\cite{sun2015universal}\\
$P_{3}+P_{4}+2P_{3}$: &\cite{sun2007mixed} & $P_{3}+P_{4}+2P_{4}$: &\cite{sun2007mixed} & $P_{3}+P_{4}+2P_{5}$: &\cite{sun2015universal} & $P_{3}+P_{4}+2P_{7}$: &\cite{ju2019ternary} & $P_{3}+P_{4}+2P_{8}$: &\cite{sun2015universal}\\
$P_{3}+P_{4}+3P_{3}$: &\cite{guo2007mixed} & $P_{3}+P_{4}+3P_{4}$: &\cite{guo2007mixed} & $P_{3}+P_{4}+4P_{3}$: &\cite{sun2007mixed} & $P_{3}+P_{4}+4P_{4}$: &\cite{sun2007mixed} & $P_{3}+P_{4}+6P_{3}$: &\cite{guo2007mixed}\\
$P_{3}+P_{4}+8P_{3}$: &\cite{oh2009mixed} & $P_{3}+P_{4}+8P_{4}$: &\cite{sun2007mixed} & $P_{3}+P_{5}+P_{5}$: &\cite{sun2015universal} & $P_{3}+P_{5}+P_{7}$: &\cite{ju2019ternary} & $P_{3}+P_{5}+P_{8}$: &\cite{sun2015universal}\\
$P_{3}+P_{5}+P_{9}$: &\cite{sun2015universal} & $P_{3}+P_{5}+P_{11}$: &\cite{ge2016some} & $P_{3}+P_{5}+P_{12}$ & & $P_{3}+P_{5}+P_{13}$: &\cite{ju2019ternary} & $P_{3}+P_{5}+2P_{3}$: &\cite{sun2015universal}\\
$P_{3}+P_{5}+2P_{4}$: &\cite{sun2015universal} & $P_{3}+P_{5}+2P_{5}$ & & $P_{3}+P_{5}+2P_{7}$: &\cite{sun2015universal} & $P_{3}+P_{5}+2P_{8}$ & & $P_{3}+P_{5}+2P_{9}$: &\cite{ju2019ternary}\\
$P_{3}+P_{5}+3P_{3}$: &\cite{sun2015universal} & $P_{3}+P_{5}+3P_{4}$: &\cite{sun2015universal} & $P_{3}+P_{5}+4P_{3}$: &\cite{sun2015universal} & $P_{3}+P_{5}+4P_{4}$: &\cite{sun2015universal} & $P_{3}+P_{5}+4P_{5}$ &\\
$P_{3}+P_{5}+6P_{3}$: &\cite{sun2015universal} & $P_{3}+P_{5}+9P_{3}$: &\cite{ju2019ternary} & $P_{3}+P_{7}+P_{7}$: &\cite{sun2015universal} & $P_{3}+P_{7}+P_{8}$: &\cite{ju2019ternary} & $P_{3}+P_{7}+P_{10}$: &\cite{sun2015universal}\\
$P_{3}+P_{7}+2P_{3}$: &\cite{ju2019ternary} & $P_{3}+P_{7}+2P_{5}$: &\cite{ju2019ternary} & $P_{3}+P_{7}+2P_{7}$: &\cite{ju2019ternary} & $P_{3}+P_{7}+5P_{3}$ & & $P_{3}+P_{8}+P_{8}$ &\\
$P_{3}+P_{8}+2P_{3}$: &\cite{sun2015universal} & $P_{3}+P_{8}+2P_{4}$: &\cite{sun2015universal} & $P_{3}+P_{8}+2P_{5}$ & & $P_{3}+P_{8}+3P_{3}$ & & $P_{3}+P_{8}+3P_{4}$: &\cite{sun2015universal}\\
$P_{3}+P_{8}+4P_{3}$ & & $P_{3}+P_{8}+6P_{3}$: &\cite{sun2015universal} & $P_{3}+P_{9}+2P_{3}$: &\cite{ju2019ternary} & $P_{3}+P_{10}+2P_{3}$: &\cite{sun2015universal} & $P_{3}+P_{12}+2P_{3}$: &\cite{ju2019ternary}\\
$P_{3}+2P_{3}+2P_{3}$: &\cite{liouville1863nouveaux} & $P_{3}+2P_{3}+2P_{4}$: &\cite{sun2007mixed} & $P_{3}+2P_{3}+2P_{7}$: &\cite{sun2015universal} & $P_{3}+2P_{3}+2P_{8}$: &\cite{ju2019ternary} & $P_{3}+2P_{3}+3P_{3}$: &\cite{liouville1863nouveaux}\\
$P_{3}+2P_{3}+3P_{4}$: &\cite{guo2007mixed} & $P_{3}+2P_{3}+4P_{3}$: &\cite{liouville1863nouveaux} & $P_{3}+2P_{3}+4P_{4}$: &\cite{guo2007mixed} & $P_{3}+2P_{3}+4P_{5}$: &\cite{sun2015universal} & $P_{3}+2P_{4}+2P_{4}$: &\cite{sun2007mixed}\\
$P_{3}+2P_{4}+2P_{5}$: &\cite{sun2015universal} & $P_{3}+2P_{4}+4P_{3}$: &\cite{sun2007mixed} & $P_{3}+2P_{4}+4P_{5}$: &\cite{ju2019ternary} & $P_{3}+2P_{5}+2P_{5}$ & & $P_{3}+2P_{5}+3P_{3}$ &\\
$P_{3}+2P_{5}+3P_{4}$: &\cite{sun2015universal} & $P_{3}+2P_{5}+4P_{3}$: &\cite{sun2015universal} & $P_{3}+2P_{5}+4P_{4}$: &\cite{ju2019ternary} & $P_{3}+2P_{5}+6P_{3}$ & & $P_{3}+2P_{8}+4P_{5}$ &\\
$P_{4}+P_{4}+P_{5}$: &\cite{sun2015universal} & $P_{4}+P_{4}+P_{8}$: &\cite{sun2015universal} & $P_{4}+P_{4}+P_{10}$: &\cite{sun2015universal} & $P_{4}+P_{4}+2P_{3}$: &\cite{sun2007mixed} & $P_{4}+P_{4}+2P_{5}$ &\\
$P_{4}+P_{4}+2P_{8}$: &\cite{ju2020universal} & $P_{4}+P_{5}+P_{5}$: &\cite{sun2015universal} & $P_{4}+P_{5}+P_{10}$ & & $P_{4}+P_{5}+2P_{3}$ & & $P_{4}+P_{5}+2P_{4}$: &\cite{sun2015universal}\\
$P_{4}+P_{5}+2P_{5}$ & & $P_{4}+P_{5}+2P_{8}$ & & $P_{4}+P_{5}+3P_{3}$: &\cite{sun2015universal} & $P_{4}+P_{5}+4P_{5}$ & & $P_{4}+P_{5}+6P_{3}$ &\\
$P_{4}+P_{7}+2P_{3}$ & & $P_{4}+P_{7}+2P_{5}$ & & $P_{4}+P_{8}+P_{8}$: &\cite{sun2015universal} & $P_{4}+P_{8}+2P_{3}$ & & $P_{4}+P_{8}+2P_{5}$ &\\
$P_{4}+P_{8}+3P_{4}$: &\cite{ju2020universal} & $P_{4}+P_{10}+2P_{3}$ & & $P_{4}+P_{12}+2P_{4}$ & & $P_{4}+2P_{3}+2P_{3}$: &\cite{sun2007mixed} & $P_{4}+2P_{3}+2P_{4}$: &\cite{sun2007mixed}\\
$P_{4}+2P_{3}+2P_{5}$ & & $P_{4}+2P_{3}+2P_{8}$ & & $P_{4}+2P_{3}+4P_{3}$: &\cite{sun2007mixed} & $P_{4}+2P_{3}+4P_{4}$: &\cite{sun2007mixed} & $P_{4}+2P_{3}+4P_{5}$ &\\
$P_{4}+2P_{3}+5P_{3}$: &\cite{sun2007mixed} & $P_{4}+2P_{4}+4P_{3}$: &\cite{sun2007mixed} & $P_{4}+2P_{5}+2P_{5}$ & & $P_{4}+2P_{5}+3P_{3}$ & & $P_{4}+2P_{5}+6P_{3}$ &\\
$P_{4}+2P_{8}+4P_{5}$ & & $P_{5}+P_{5}+P_{5}$: &\cite{guy1994every} & $P_{5}+P_{5}+P_{10}$: &\cite{sun2015universal} & $P_{5}+P_{5}+2P_{3}$ & & $P_{5}+P_{5}+2P_{4}$ &\\
$P_{5}+P_{5}+2P_{5}$: &\cite{sun2015universal} & $P_{5}+P_{5}+2P_{8}$ & & $P_{5}+P_{5}+3P_{3}$ & & $P_{5}+P_{5}+3P_{5}$: &\cite{ge2016some} & $P_{5}+P_{5}+4P_{3}$ &\\
$P_{5}+P_{5}+4P_{5}$: &\cite{sun2015universal} & $P_{5}+P_{5}+5P_{5}$: &\cite{sun2015universal} & $P_{5}+P_{5}+6P_{5}$: &\cite{oh2011ternary} & $P_{5}+P_{5}+8P_{5}$: &\cite{oh2011ternary} & $P_{5}+P_{5}+9P_{5}$: &\cite{oh2011ternary}\\
$P_{5}+P_{5}+10P_{5}$: &\cite{oh2011ternary} & $P_{5}+P_{7}+3P_{3}$: &\cite{ge2016some} & $P_{5}+P_{8}+2P_{3}$ & & $P_{5}+P_{8}+2P_{5}$ & & $P_{5}+P_{8}+3P_{3}$ &\\
$P_{5}+P_{8}+3P_{4}$ & & $P_{5}+P_{9}+2P_{3}$: &\cite{ju2019ternary} & $P_{5}+P_{9}+3P_{3}$ & & $P_{5}+P_{10}+2P_{3}$ & & $P_{5}+P_{10}+3P_{3}$ &\\
$P_{5}+2P_{3}+2P_{3}$ & & $P_{5}+2P_{3}+2P_{8}$ & & $P_{5}+2P_{3}+3P_{3}$: &\cite{sun2015universal} & $P_{5}+2P_{3}+3P_{4}$: &\cite{sun2015universal} & $P_{5}+2P_{3}+4P_{5}$ &\\
$P_{5}+2P_{4}+3P_{3}$ & & $P_{5}+2P_{4}+4P_{3}$ & & $P_{5}+2P_{4}+6P_{4}$: &\cite{sun2015universal} & $P_{5}+2P_{5}+2P_{5}$: &\cite{sun2015universal} & $P_{5}+2P_{5}+3P_{3}$ &\\
$P_{5}+2P_{5}+3P_{5}$: &\cite{ge2016some} & $P_{5}+2P_{5}+4P_{5}$: &\cite{sun2015universal} & $P_{5}+2P_{5}+6P_{3}$ & & $P_{5}+2P_{5}+6P_{5}$: &\cite{ge2016some} & $P_{5}+2P_{5}+8P_{5}$: &\cite{oh2011ternary}\\
$P_{5}+2P_{8}+4P_{5}$ & & $P_{5}+3P_{3}+3P_{3}$ & & $P_{5}+3P_{3}+3P_{4}$: &\cite{sun2015universal} & $P_{5}+3P_{3}+4P_{3}$ & & $P_{5}+3P_{3}+5P_{5}$ &\\
$P_{5}+3P_{4}+6P_{3}$ & & $P_{5}+3P_{5}+3P_{5}$: &\cite{ge2016some} & $P_{5}+3P_{5}+4P_{5}$: &\cite{ge2016some} & $P_{5}+3P_{5}+6P_{5}$: &\cite{sun2015universal} & $P_{5}+3P_{5}+7P_{5}$: &\cite{oh2011ternary}\\
$P_{5}+3P_{5}+8P_{5}$: &\cite{oh2011ternary} & $P_{5}+3P_{5}+9P_{5}$: &\cite{ge2016some} & $P_{7}+P_{8}+2P_{3}$ & & $P_{8}+P_{8}+2P_{3}$ & & $P_{8}+P_{8}+2P_{4}$: &\cite{ju2020universal}\\
$P_{8}+P_{8}+2P_{5}$ & & $P_{8}+P_{10}+2P_{5}$ & & $P_{8}+2P_{3}+2P_{3}$ & & $P_{8}+2P_{3}+2P_{5}$ & & $P_{8}+2P_{3}+3P_{3}$ &\\
$P_{8}+2P_{3}+3P_{4}$: &\cite{sun2015universal} & $P_{8}+2P_{4}+2P_{5}$ & & $P_{8}+2P_{4}+3P_{4}$: &\cite{ju2020universal} & $P_{8}+2P_{4}+4P_{3}$ & & $P_{8}+2P_{5}+2P_{5}$ &\\
$P_{8}+2P_{5}+3P_{3}$ & & $P_{8}+2P_{5}+4P_{3}$ & & & & & & &\\
\bottomrule
\end{longtable}}

\begin{theorem}
\label{thm::ternaryuniversal}
Excluding sums involving generalized hexagonal numbers, there are at most $3052$ ternary universal sums. Among them, there are $197$ ternary sums that are confirmed to be universal, given in Table \ref{tbl::confirmedcases}.
\end{theorem}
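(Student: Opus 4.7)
The proof splits naturally into two tasks: (i) showing that there are at most $3052$ candidate universal ternary sums, and (ii) certifying the $197$ entries of Table \ref{tbl::confirmedcases}.

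For (i), I begin with the observation that every universal ternary sum $F = a_1 P_{m_1} + a_2 P_{m_2} + a_3 P_{m_3}$ must represent $1$; since $P_m(x) \geq 0$ for all $m \geq 3$ and $x \in \ZZ$, this forces one summand to equal $1$, and together with $a_1 \leq a_2 \leq a_3$ we conclude $a_1 = 1$. Requiring $F$ to represent $2, 3, 4, \ldots$ in turn imposes increasingly stringent constraints on $a_2, a_3$ and on the polygonal parameters: for large $m_i$, the summand $a_i P_{m_i}$ can contribute to small integers only via $x_i \in \{0, \pm 1\}$, which bounds $m_i$ in terms of the small integers still left uncovered. Running this depth-$3$ escalator explicitly yields a finite search region $S$ that contains every universal ternary sum.

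Next, I enumerate $S$ by computer, discarding each candidate that fails to represent some integer up to a screening threshold (a few hundred suffices in practice), and excluding sums involving $P_6$ via the identity $\{P_6(x) : x \in \ZZ\} = \{P_3(x) : x \in \ZZ\}$, under which each $P_6$ summand is representationally interchangeable with a $P_3$ summand. The resulting list has at most $3052$ entries. For (ii), the majority of the $197$ cases are settled by direct citation of the literature attached to each entry of Table \ref{tbl::confirmedcases}, while the remaining (uncited) entries are handled by elementary change-of-variable arguments or by reduction to known universal ternary or quaternary forms, all documented in the second author's thesis.

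The main obstacle, as signalled in Remark \ref{rmk::ineffective}, is establishing universality for the unconfirmed candidates among the $3052$. Unlike the quaternary setting, where the Siegel mass formula and Deligne's bounds on cusp form coefficients yield essentially uniform universality results, for ternary theta series the cuspidal contribution is of comparable magnitude to the Eisenstein part, and proving universality generally requires analytic input currently available only conditionally on generalized Riemann hypotheses. The gap between $3052$ and $197$ therefore reflects a fundamental barrier of current technology rather than a shortcoming of the enumeration.
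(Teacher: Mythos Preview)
Your treatment of part (i) matches the paper's approach: Proposition~\ref{thm::truant3} bounds the search region (via Lemma~\ref{thm::boundingtruant} with $B_1=B_2=161$, $B_3=78$), and a computer sweep over that region isolates the $3052$ candidates. One small caution: the paper screens up to $3000$ and finds the maximal truant among the discarded nodes is $644$, so ``a few hundred'' is borderline; you want a threshold of at least $645$ to be sure you have exactly $3052$ survivors rather than a slightly larger list.

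For part (ii), however, your account of the $72$ uncited entries diverges from what the paper actually does. You attribute them to ``elementary change-of-variable arguments or reduction to known universal ternary or quaternary forms,'' but the paper's mechanism is different and more uniform: for each of those $72$ sums one shows that the genus of the associated shifted lattice (Definition~\ref{thm::defineshiftedlattice}) contains a single class. By the Siegel--Weil formula this forces the cuspidal part $G_X$ to vanish identically, so $r_X(\mu n+\rho)=a_{E_X}(\mu n+\rho)$, and universality reduces to checking that the local densities $\beta_p$ are nonzero---a purely local computation. This one-class-genus argument is what the introduction calls ``algebraic methods,'' and it sidesteps the cuspidal obstruction you (correctly) flag in your final paragraph rather than working around it by ad hoc reductions. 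Your phrasing also leans on ``documented in the second author's thesis,'' which is not something you can invoke in a blind proof; the paper's argument is self-contained modulo the class-number-one verification.
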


We can avoid the existence of universal ternary and quaternary sums to obtain effective results, as well as better growth by restricting to subclasses of $\mathcal{C}_{\mathfrak{M}}$. For any integers $\mathfrak{m},\mathfrak{M}\in\ZZ$ such that $\mathfrak{M}\geq\mathfrak{m}-2\geq1$, we define the subclass $\mathcal{C}_{\mathfrak{m},\mathfrak{M}}$ as the class of sums of generalized polygonal numbers with parameters $m_1,\ldots,m_r\geq3$ such that $\lcm(m_1-2,\ldots,m_r-2)\leq\mathfrak{M}$ and $\min(m_1,\ldots,m_r)\geq\mathfrak{m}$. For these classes, we can establish effective finiteness theorems.

\begin{theorem}
\label{thm::mainfinitenesstheorem3}
For any integers $\mathfrak{m},\mathfrak{M}\in\ZZ$ such that $\mathfrak{M}\geq\mathfrak{m}-2\geq1$, there exists a minimal constant $\Constant_{\mathfrak{m},\mathfrak{M}}>0$ such that every sum of generalized polygonal numbers in $\mathcal{C}_{\mathfrak{m},\mathfrak{M}}$ is universal if and only if it represents every positive integer up to $\Constant_{\mathfrak{m},\mathfrak{M}}$. For any real number $\varepsilon>0$, we have the following upper bounds for the constant $\Constant_{\mathfrak{m},\mathfrak{M}}$:
\begin{enumerate}[leftmargin=*]
\item If $\mathfrak{m}\geq20$, then we have
$$
\Constant_{\mathfrak{m},\mathfrak{M}}\ll_{\varepsilon}\mathfrak{M}^{43+\varepsilon}.
$$
\item If $\mathfrak{m}\geq36$, then we have
$$
\Constant_{\mathfrak{m},\mathfrak{M}}\ll_{\varepsilon}\mathfrak{M}^{27+\varepsilon}.
$$
\end{enumerate}
For each case, the implied constant is effective.
\end{theorem}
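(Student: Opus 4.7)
The plan is to adapt the escalator-tree/modular-form strategy used for Theorem~\ref{thm::mainfinitenesstheorem1}, exploiting the hypothesis $\mathfrak{m}\geq 20$ (resp.\ $\mathfrak{m}\geq 36$) to avoid the low-rank leaves that were the source of ineffectiveness (resp.\ of the weaker exponent) there. First, following the standard construction, I would build Bhargava's escalator tree $T_{\mathfrak{m},\mathfrak{M}}$ for $\mathcal{C}_{\mathfrak{m},\mathfrak{M}}$: its root is the empty sum, and each non-universal node $F$ with smallest missing positive integer $t$ has children $F+aP_m(x)$ where $1\leq a\leq t$, $m\geq\mathfrak{m}$, and the augmented parameter set still has $\lcm$ at most $\mathfrak{M}$. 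The tree is finite, and universality of an arbitrary member of $\mathcal{C}_{\mathfrak{m},\mathfrak{M}}$ reduces to (i) universality of each leaf and (ii) representability of the finitely many escalator integers labelling the paths to those leaves.

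The key new step is a combinatorial analysis of low-rank nodes to exclude universal ternary (and, in part~(2), quaternary) leaves. Using the explicit values $P_m(0)=0$, $P_m(\pm 1)\in\{1,m-3\}$, $P_m(\pm 2)\in\{m,3m-8\}$, etc., the value set $P_m(\ZZ)$ for $m\geq 20$ has gaps large enough to force the escalator path to keep branching. A finite case analysis along these paths should show that no ternary leaf of $T_{\mathfrak{m},\mathfrak{M}}$ is universal once $\mathfrak{m}\geq 20$, and that no quaternary leaf is universal once $\mathfrak{m}\geq 36$; hence every leaf has rank at least $4$ in case~(1) and at least $5$ in case~(2). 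This eliminates the ineffective ternary obstruction of Remark~\ref{rmk::ineffective} in part~(1), and it ensures that the theta series of each leaf is a modular form of weight at least $5/2$ in part~(2), opening the door to sharper analytic estimates.

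For each leaf $F$ of rank $r$, completing the square expresses $\theta_F(\tau)=\sum_{n\geq 0}r_F(n)q^{n}$ (up to a power of $q$) as the theta series of a shifted lattice of rank $r$, and hence as a modular form of weight $r/2$ on a congruence subgroup whose level $N$ is bounded by a polynomial in $\mathfrak{M}$ (the escalator coefficients $a_i$ themselves being controlled polynomially in $\mathfrak{M}$). Decompose $\theta_F=E_F+C_F$ into its Eisenstein and cuspidal projections. A lower bound of order $n^{r/2-1}$, times a local density factor bounded below by an explicit power of $\mathfrak{M}$, for the $n$-th coefficient of $E_F$, together with an effective Deligne--Duke-type upper bound for the $n$-th coefficient of $C_F$ (via the Shimura lift in the half-integral weight case), then combine to yield an effective threshold $\Lambda(F)$ beyond which $r_F(n)>0$. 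Working out the exponents: in case $r\geq 4$ one obtains $\Lambda\ll_\varepsilon\mathfrak{M}^{43+\varepsilon}$, recovering the bound of Theorem~\ref{thm::mainfinitenesstheorem1} but now with an effective implied constant (since no ternary leaves remain); in case $r\geq 5$ the increased weight and the resulting sharper cusp-form bound give $\Lambda\ll_\varepsilon\mathfrak{M}^{27+\varepsilon}$.

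The hard part will be the low-rank case analysis of the second step: ruling out universal ternary sums under $\mathfrak{m}\geq 20$, and universal quaternary sums under $\mathfrak{m}\geq 36$, requires an explicit enumeration of the possible escalator paths together with concrete unrepresented integers at each candidate low-rank terminal node, and the sharpness of the thresholds $20$ and $36$ is dictated precisely by this enumeration. Once the low-rank cases are cleared, the analytic half is a direct specialization of the machinery deployed for Theorem~\ref{thm::mainfinitenesstheorem1} to the higher-rank regime, and the effective implied constants drop out because every surviving leaf lies in a range where effective lower bounds for local densities and effective upper bounds for cusp-form coefficients are both available.
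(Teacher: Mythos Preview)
Your overall strategy---escalator tree, use the lower bound on $\mathfrak{m}$ to rule out low-rank universal leaves, then run the analytic machinery effectively---matches the paper's. However, your depth thresholds are off by one in both cases, and this is not cosmetic.

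In case~(1) you propose to exclude only ternary leaves and work at rank~$\geq 4$. But the Eisenstein lower bound in Proposition~\ref{thm::nodelowerbound} (and the underlying Proposition~\ref{thm::almostallbound}) is stated and proved only for $r\geq 5$, so the paper's machinery does not apply at depth~$4$. The paper instead observes that when $\mathfrak{m}\geq 20$ every $P_{m_i}$ takes values only in $\{0,1\}\cup[17,\infty)$, so \emph{no} node of depth $\leq 4$ can represent~$16$; hence there are no leaves of depth $\leq 4$, all truants there are bounded by~$16$, and one lands directly at depth~$5$ with fully explicit parameters.

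In case~(2) you claim rank~$\geq 5$ (weight $\geq 5/2$) already yields the exponent $27+\varepsilon$. It does not: comparing Propositions~\ref{thm::nodelowerbound} and~\ref{thm::nodeupperbound} at the odd value $r=5$ reproduces the same $43+\varepsilon$ as in Theorem~\ref{thm::mainfinitenesstheorem1}. The improvement to $27+\varepsilon$ comes specifically from the stronger cusp-form bound for \emph{even} rank in Proposition~\ref{thm::nodeupperbound}(2). The paper therefore uses $\mathfrak{m}\geq 36$ to conclude that depth-$5$ nodes cannot represent~$32$ (values lie in $\{0,1\}\cup[33,\infty)$, so five of them sum to at most~$5$ or at least~$33$), pushes to depth~$6$, and invokes the even-$r$ estimate there.

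Finally, the ``finite case analysis along escalator paths'' you anticipate is unnecessary: the paper's exclusion of low-depth leaves is the single-line observation that $16$ (resp.~$32$) is unrepresented, which immediately bounds all low-depth truants and hence all coefficients $a_i$ effectively.
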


The rest of the paper is organized as follows. We begin by constructing the escalator tree and proving elementary properties of the escalator tree in Section \ref{sec::escalator}. To obtain the existence and the growth of the constants $\Constant_{\mathfrak{M}}$ and $\Constant_{\mathfrak{m},\mathfrak{M}}$, we have to study the representations of integers by nodes. In Section \ref{sec::shiftedlattice}, we convert the problem into the study of the number of representations by a shifted lattice, which splits as a sum of the Fourier coefficients of an Eisenstein series and a cusp form. In Section \ref{sec::Eisenstein} and Section \ref{sec::cuspidal}, we give estimates on the Eisenstein part and the cuspidal part, respectively. Finally, we prove the main theorem.

\section{Elementary Properties of the Escalator Tree}\label{sec::escalator}

We first construct the escalator tree for a given class $\mathcal{C}$ of sums of generalized polygonal numbers. The escalator tree $T_{\mathcal{C}}$ for universal sums of generalized polygonal numbers in $\mathcal{C}$ is a rooted tree constructed inductively as follows. We define the root to be $F=0$ with depth $0$ and then inductively construct the nodes of depth $r+1$ from the nodes of depth $r$ as follows. If a node of depth $r$ is universal, then it is a leaf of the tree. If a sum of generalized polygonal numbers $F$ is not universal, then we call the smallest positive integer $t(F)$ not represented by $F$ the \begin{it}truant\end{it} of $F$; for ease of notation, we write $t(F)\coloneqq\infty$ if $F$ is universal. The children of a node $F(x_1,\ldots,x_r)=\sum_{j=1}^r a_j P_{m_j}(x_j)$ with $t(F)<\infty$ are the sums of generalized polygonal numbers
\[
F(x_1,\cdots,x_r)+a_{r+1}P_{m_{r+1}}(x_{r+1})\in\mathcal{C}
\]
with $a_{r}\leq a_{r+1}\leq t(F)$, $m_{r+1}\neq6$, and an additional restriction that $m_{r}\leq m_{r+1}$ if $a_{r}=a_{r+1}$ to avoid repeated nodes. For any class $\mathcal{C}$, we define the set 
$$
\mathscr{T}_{\mathcal{C}}\coloneqq\left\{t(F)<\infty\mid F\in T_{\mathcal{C}}\right\}.
$$
By construction, this subset has the property that if a sum $F\in\mathcal{C}$ represents every integer in $\mathscr{T}_{\mathcal{C}}$, then $F$ is universal. We also see that $\mathscr{T}_{\mathcal{C}}$ is minimal in the sense that it is the smallest subset of $\NN$ with this property. Therefore, if $\mathscr{T}_{\mathcal{C}}$ is a finite set, we obtain a finiteness theorem by taking $\Gamma_{\mathcal{C}}$ to be the maximal integer contained in $\mathscr{T}_{\mathcal{C}}$. 

We denote $T_{\infty}$ the escalator tree for the class $\mathcal{C}_{\infty}$ of arbitrary sums of generalized polygonal numbers. For any integer $\mathfrak{M}\geq1$, we abbreviate $T_{\mathfrak{M}}$ for the tree $T_{\mathcal{C}_{\mathfrak{M}}}$. We will show in Theorem \ref{thm::finitetree} that $T_{\mathfrak{M}}$ is a finite tree. Therefore, the existence of the constant $\Constant_{\mathfrak{M}}>0$ in Theorem \ref{thm::mainfinitenesstheorem1} is established. Similarly, we abbreviate $T_{\mathfrak{m},\mathfrak{M}}$ for the tree $T_{\mathcal{C}_{\mathfrak{m},\mathfrak{M}}}$ for any integers $\mathfrak{m},\mathfrak{M}\in\ZZ$ such that $\mathfrak{M}\geq\mathfrak{m}-2\geq1$. Then the existence of the constant $\Constant_{\mathfrak{m},\mathfrak{M}}$ in Theorem \ref{thm::mainfinitenesstheorem3} follows immediately because $\Constant_{\mathfrak{m},\mathfrak{M}}\leq\Constant_{\mathfrak{M}}$. To study the growth of the constants $\Constant_{\mathfrak{M}}$ and $\Constant_{\mathfrak{m},\mathfrak{M}}$, it is clear that we have to study the truants of non-leaf nodes in the escalator tree $T_{\infty}$. 

We begin with an observation on the truants of sums. 
\begin{lemma}\label{thm::observation}
Fix $1\leq i\leq r$. Let $F'=a_1P_{m_{1}}+\cdots+a_iP_{m_i'}+\cdots+a_rP_{m_{r}}$ be a sum with $t(F')<\infty$. For any integer $m_i\geq4+\lfloor t(F')/a_i\rfloor$, the sum $F=a_1P_{m_{1}}+\cdots+a_iP_{m_i}+\cdots+a_rP_{m_{r}}$ has truant $t(F)=c\leq t(F')$, where $c$ is a constant depending on $a_j$ with $1\leq j\leq r$ and $m_{j}$ with $j\neq i$.
\end{lemma}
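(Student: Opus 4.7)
The plan is to exploit the following fact about generalized polygonal numbers: for any integer $m\geq 3$, one has $P_m(0)=0$, $P_m(1)=1$, and $P_m(x)\geq P_m(-1)=m-3$ for every integer $x\notin\{0,1\}$. A short case check using $P_m(-1)=m-3$, $P_m(2)=m$, $P_m(-2)=3m-8$, combined with the monotonicity of $P_m(x)$ in $m$ for fixed $x\notin\{0,1\}$ (the $m$-derivative is $x(x-1)/2>0$ on those $x$), establishes this. Applied to our hypothesis $m_i\geq 4+\lfloor t(F')/a_i\rfloor$, this forces
\[
a_iP_{m_i}(x_i)\geq a_i(m_i-3)\geq a_i\bigl(1+\lfloor t(F')/a_i\rfloor\bigr)>t(F')
\]
for every $x_i\in\ZZ\setminus\{0,1\}$.

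Next I would introduce $G\coloneqq\sum_{j\neq i}a_jP_{m_j}$, a sum of generalized polygonal numbers whose data depends only on $a_1,\ldots,a_r$ and on $m_j$ for $j\neq i$. The previous step forces any representation of an $n\leq t(F')$ by $F$ to have $x_i\in\{0,1\}$, and so $F$ represents such an $n$ if and only if $G$ represents $n$ or $G$ represents $n-a_i$ (with the convention that $G$ represents $0$ trivially). In particular, the subset of $\{1,2,\ldots,t(F')\}$ that is represented by $F$ depends only on $G$ and $a_i$, and is independent of $m_i$ throughout the admissible range.

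The final step is to verify $t(F)\leq t(F')$, so that the truant actually lies in the range just controlled. Since $F'$ does not represent $t(F')$, specializing $x_i=0$ in any would-be $F'$-representation shows that $G$ does not represent $t(F')$, and specializing $x_i=1$ shows that $G$ does not represent $t(F')-a_i$. Together with the previous paragraph, $F$ also fails to represent $t(F')$, whence $t(F)\leq t(F')$, and then $t(F)$ is a constant $c\leq t(F')$ depending only on $G$ and $a_i$, as claimed. The only mildly delicate piece is this final bootstrap: the earlier paragraphs only describe $F$ on integers up to $t(F')$, so one must separately propagate the failure of $F'$ at its truant down to $G$ and back up to $F$ in order to land inside the controlled range.
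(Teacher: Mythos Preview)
Your proof is correct and follows essentially the same approach as the paper's. Both arguments hinge on the inequality $a_iP_{m_i}(x_i)\geq a_i(m_i-3)>t(F')$ for $x_i\notin\{0,1\}$ to show that small representations are independent of $m_i$; your explicit introduction of $G$ and the characterization ``$F$ represents $n\leq t(F')$ iff $G$ represents $n$ or $n-a_i$'' just makes more visible what the paper compresses into the phrase ``for the same reason, the representations of integers $n\leq t(F')-1$ are independent of the choice of $m_i$.''
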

\begin{proof}
If $t(F)>t(F')$ for some $m_i\geq 4+\lfloor t(F')/a_i\rfloor$, then there exists $x_1,\ldots,x_r\in\ZZ$ such that
$$
a_1P_{m_{1}}(x_1)+\cdots+a_iP_{m_i}(x_i)+\cdots+a_rP_{m_{r}}(x_r)=t(F').
$$
Since $F'$ cannot represent $t(F')$, we see that $x_i\neq0,1$. Noticing that $m_i\geq4$, we have $P_{m_i}(x_i)\geq m_i-3$. Therefore, we have
$$
a_1P_{m_{1}}(x_1)+\cdots+a_iP_{m_i}(x_i)+\cdots+a_rP_{m_{r}}(x_r)\geq a_iP_{m_i}(x_i)\geq a_i(1+\lfloor t(F')/a_i\rfloor)>t(F'),
$$
which is a contradiction. This shows that $t(F)\leq t(F')$. For the same reason, the representations of integers $n\leq t(F')-1$ are independent of the choice of $m_i$. Thus, the truant $t(F)$ is a constant $c$ for any integer $m_i\geq4+\lfloor t(F')/a_i\rfloor$.
\end{proof}


As a consequence, we obtain the following useful enumeration of the nodes in $T_{\infty}$ of depth $r\leq3$.


\begin{proposition}
\label{thm::truant2}
Suppose that $a_1P_{m_1}+a_2P_{m_2}$ is a node in $T_{\infty}$. Then, $a_1=1$ and $1\leq a_2\leq 3$. For each choice of $a_2$, the truants are summarized in Table \ref{tbl::truant2}.
\end{proposition}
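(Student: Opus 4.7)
The plan is to apply Lemma~\ref{thm::observation} iteratively, reducing the determination of depth-$2$ truants to a finite computation that can be carried out by direct enumeration. First I will nail down the depth-$1$ truants, which bound $a_2$. Since the root $F = 0$ has $t(0) = 1$, any depth-$1$ node has $a_1 = 1$, and it takes the form $P_{m_1}$ with $m_1 \geq 3$ and $m_1 \neq 6$. Evaluating $P_m$ at $x \in \{0, \pm 1, \pm 2\}$ gives $t(P_3) = t(P_4) = 2$, $t(P_5) = 3$, and $t(P_7) = 2$. Applying Lemma~\ref{thm::observation} with $F' = P_7$ (the threshold $4 + \lfloor t(F')/1 \rfloor = 6$ being attained at $m = 7$) then yields $t(P_m) = 2$ for every admissible $m \geq 7$. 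Since the children of $P_{m_1}$ satisfy $1 \leq a_2 \leq t(P_{m_1})$, this forces $a_2 \leq 3$, with equality $a_2 = 3$ occurring only when $m_1 = 5$.

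To populate Table~\ref{tbl::truant2}, I will split by the value of $a_2 \in \{1, 2, 3\}$, observing the ordering convention $m_1 \leq m_2$ when $a_1 = a_2 = 1$. For each fixed pair $(a_2, m_1)$, Lemma~\ref{thm::observation} applied in the variable $m_2$ produces a threshold $M_0 = 4 + \lfloor t(F^{\star})/a_2 \rfloor$ (computed from a convenient base sum $F^{\star} = P_{m_1} + a_2 P_{m_2^{\star}}$) beyond which the truant $t(P_{m_1} + a_2 P_{m_2})$ stabilizes as a constant depending only on $m_1$ and $a_2$. The finitely many cases $m_2 < M_0$ are then handled directly by listing values of $P_{m_2}$ and checking which integers up to $M_0$ are represented. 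A symmetric application of the lemma in the variable $m_1$ restricts that range to finitely many values per $a_2$, so the full table is obtained after a finite enumeration.

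The bulk of the work is bookkeeping: one must track the excluded value $m = 6$ throughout and compute the correct stabilization threshold in each case. The main potential pitfall — rather than a genuine obstacle — is ensuring that the lemma's hypothesis $t(F^{\star}) < \infty$ holds for the chosen base sum $F^{\star}$; this can always be arranged by taking $m_2^{\star}$ small (for instance $m_2^{\star} = 3$), which keeps $F^{\star}$ simple enough that its truant is easily verified to be finite. No deeper machinery is required at this stage of the tree.
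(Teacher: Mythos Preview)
Your proposal is correct and follows essentially the same route as the paper: determine the depth-$1$ truants to bound $a_2$, then reduce the depth-$2$ table to a finite enumeration via Lemma~\ref{thm::observation}, computing the small-$m$ cases directly. The paper's proof is terser (it simply asserts the computation for $m_1,m_2\leq 10$ and invokes the lemma for the rest, using $F'=P_3+P_6$ as the illustrative example), but your more detailed two-stage application of the lemma in $m_2$ and then in $m_1$ is exactly what underlies that assertion.
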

\begin{proof}
One can calculate the truants of nodes with $m_1,m_2\leq10$. The calculation of the truants of other nodes follows from by Lemma \ref{thm::observation}. For example, to show that $t(P_3+P_{m_2})=5$ for any integer $m_2\geq9$, one applies Lemma \ref{thm::observation} with $i=2$ and $F'=P_3+P_6$.
\end{proof}

\begin{table}[htb]
\caption{Truants of nodes of depth $2$}
\begin{tabular}{|c|p{0.4cm}<{\centering}|p{0.4cm}<{\centering}|p{0.4cm}<{\centering}|p{0.4cm}<{\centering}|p{0.4cm}<{\centering}|c|p{0.4cm}<{\centering}|p{0.4cm}<{\centering}|p{0.4cm}<{\centering}|p{0.4cm}<{\centering}|p{0.4cm}<{\centering}|p{0.4cm}<{\centering}|c|c|}
\cline{2-15}
\multicolumn{1}{c|}{} & \multicolumn{6}{c|}{$a_2=1$} & \multicolumn{7}{c|}{$a_2=2$} & $a_2=3$\\
\hline
\diagbox{$m_2$}{$m_1$} & 3 & 4 & 5 & 7 & 8 & $\geq9$ & 3 & 4 & 5 & 7 & 8 & 9 & $\geq10$ & 5\\
\hline
3        & 5  & 8  & 9  & 9  & 12 & 5 & 4 & 5 & 10 & 5  & 4 & 4 & 4 & 6\\
\hline
4        & 8  & 3  & 20 & 3  & 3  & 3 & 4 & 5 & 6  & 5  & 4 & 4 & 4 & 6\\
\hline
5        & 9  & 20 & 11 & 10 & 4  & 4 & 9 & 7 & 8  & 12 & 6 & 7 & 6 & 9\\
\hline
7        & 9  & 3  & 10 & 3  & 3  & 3 & 4 & 5 & 6  & 5  & 4 & 4 & 4 & 6\\
\hline
8        & 12 & 3  & 4  & 3  & 3  & 3 & 4 & 5 & 6  & 5  & 4 & 4 & 4 & 6\\
\hline
$\geq9$  & 5  & 3  & 4  & 3  & 3  & 3 & 4 & 5 & 6  & 5  & 4 & 4 & 4 & 6\\
\hline
\end{tabular}
\label{tbl::truant2}
\end{table}

The following lemma is a generalization of Lemma \ref{thm::observation}, which is used to bound the truants of sums when varying multiple parameters.

\begin{lemma}
\label{thm::boundingtruant}
Fix $r\geq1$ and parameters $a_1,\ldots,a_r\in\ZZ$. Suppose that there exist integers $A_i$ and $B_i$ such that $B_i\geq A_i\geq3$ for any $1\leq i\leq r$ with the property: For any $1\leq i\leq r$ and for any integers $A_j\leq m_j\leq B_j$ for $1\leq j\neq i\leq r$, there exists a non-universal sum $F'=a_1P_{m_1}+\cdots+a_iP_{m_i'}+\cdots+a_rP_{m_r}$ with $A_i\leq m_i'\leq B_i$ such that $\left\lfloor t(F')/a_k\right\rfloor+3\leq B_k$ for any $1\leq k\leq r$. Then any sum $F$ of the form $a_1P_{m_1}+\cdots+a_rP_{m_r}$ with $m_i\geq A_i$ for any $1\leq i\leq r$ and $m_j\geq B_j+1$ for some $1\leq j\leq r$ has the truant
$$
t(F)\leq t_{\max}\coloneqq\max\{t(a_1P_{m_1}+\cdots+a_rP_{m_r})<\infty\mid A_i\leq m_i\leq B_i\text{ for }1\leq i\leq r\}.
$$
\end{lemma}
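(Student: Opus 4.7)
My plan is to reduce $F$ to a configuration inside the boxes $[A_i,B_i]$ via a controlled chain of applications of Lemma \ref{thm::observation}, where the hypothesis of the lemma enters only once, to produce an initial ``anchor'' sum whose truant controls everything afterwards. Set $S \coloneqq \{i : m_i \geq B_i + 1\}$, which is nonempty by assumption, and pick any $j_0 \in S$. For each $k \in S \setminus \{j_0\}$ choose an arbitrary value $m_k^{\ast} \in [A_k, B_k]$, and for $k \notin S$ set $m_k^{\ast} \coloneqq m_k$, so that $m_k^{\ast} \in [A_k, B_k]$ for every $k \neq j_0$.

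Applying the hypothesis with $i = j_0$ to the boxed tuple $(m_k^{\ast})_{k \neq j_0}$ produces a non-universal anchor sum $F^{(0)} = a_{j_0} P_{m_{j_0}^{\ast}} + \sum_{k \neq j_0} a_k P_{m_k^{\ast}}$ with $m_{j_0}^{\ast} \in [A_{j_0}, B_{j_0}]$, all of whose parameters lie in their boxes, and which satisfies $\lfloor t(F^{(0)})/a_k \rfloor + 3 \leq B_k$ for every $1 \leq k \leq r$. In particular, $t(F^{(0)}) \leq t_{\max}$ by the very definition of $t_{\max}$.

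Next, I enumerate $S$ as $\{j_0, j_1, \ldots, j_{s-1}\}$ with $s = |S|$, and inductively replace $m_{j_l}^{\ast}$ by the original $m_{j_l}$ of $F$, one index at a time, obtaining a chain $F^{(0)}, F^{(1)}, \ldots, F^{(s)} = F$. At step $l$, Lemma \ref{thm::observation} requires $m_{j_l} \geq 4 + \lfloor t(F^{(l-1)})/a_{j_l} \rfloor$. Because truants are non-increasing along the chain (each step being a single application of Lemma \ref{thm::observation}), $t(F^{(l-1)}) \leq t(F^{(0)})$, whence
$$
4 + \lfloor t(F^{(l-1)})/a_{j_l} \rfloor \leq 4 + \lfloor t(F^{(0)})/a_{j_l} \rfloor \leq B_{j_l} + 1 \leq m_{j_l},
$$
so Lemma \ref{thm::observation} does apply at each step. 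After $s$ replacements one arrives at $F$ itself, yielding $t(F) \leq t(F^{(0)}) \leq t_{\max}$ as desired.

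The main subtlety is that the hypothesis of the lemma cannot be applied directly to the configuration of $F$ when $|S| \geq 2$, since the coordinates of $F$ indexed by $S \setminus \{j_0\}$ lie outside their boxes. The trick is to first artificially move those coordinates into the boxes to manufacture the anchor $F^{(0)}$, and then to undo the modifications afterwards via Lemma \ref{thm::observation}; this is legitimate precisely because raising a polygonal parameter can only decrease the truant, so the uniform bound $\lfloor t(\,\cdot\,)/a_k \rfloor + 3 \leq B_k$ obtained for $F^{(0)}$ propagates to every intermediate sum $F^{(l)}$ and validates each application of Lemma \ref{thm::observation} along the way.
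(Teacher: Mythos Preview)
Your proof is correct and follows essentially the same approach as the paper. The paper argues by induction on $s=|S|$, carrying along the auxiliary inequality $\lfloor t(\cdot)/a_k\rfloor+3\le B_k$ at each step, whereas you unwind this induction into an explicit chain $F^{(0)}\to\cdots\to F^{(s)}=F$ anchored at a fully boxed configuration; both arguments invoke the hypothesis exactly once (for the base/anchor) and then apply Lemma~\ref{thm::observation} $s$ times via the same inequality $m_{j_l}\ge B_{j_l}+1\ge 4+\lfloor t(F^{(l-1)})/a_{j_l}\rfloor$.
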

\begin{proof}
Let $F$ be any sum of the form $a_1P_{m_1}+\cdots+a_rP_{m_r}$ with $m_i\geq A_i$ for any $1\leq i\leq r$ and $m_j\geq B_j+1$ for some $1\leq j\leq r$. Let $s$ be the number of indices $1\leq j\leq r$ such that $m_j\geq B_j+1$. By induction on the integer $s\geq1$, we prove the lemma jointly with the following inequality
$$
\left\lfloor\frac{t(F)}{a_k}\right\rfloor+3\leq B_k
$$
for any $1\leq k\leq r$. If $s=1$, without loss of generality, we assume that $m_1\geq B_1+1$. By the assumption of the lemma, there exists a sum $F'=a_1P_{m_1'}+\cdots+a_rP_{m_r}$ such that $t(F')\leq t_{\max}$ and $\left\lfloor t(F')/a_k\right\rfloor+3\leq B_k$ for any $1\leq k\leq r$. Using Lemma \ref{thm::observation}, we see that $t(F)\leq t(F')\leq t_{\max}$ and it follows that for any $1\leq k\leq r$, we have
$$
\left\lfloor\frac{t(F)}{a_k}\right\rfloor+3\leq\left\lfloor\frac{t(F')}{a_k}\right\rfloor+3\leq B_k,
$$
as desired. Next we prove the induction step. Assuming the lemma and the inequality hold for $s=k-1$, we prove them for $2\leq s=k\leq r$. Without loss of generality, we may assume that $m_1\geq B_1+1$. Then we choose any sum $F'=a_1P_{m_1'}+\cdots+a_rP_{m_r}$ with $A_1\leq m_1'\leq B_1$. By the inductive hypothesis, we have $t(F')\leq t_{\max}$ and 
$$
\left\lfloor\frac{t(F')}{a_k}\right\rfloor+3\leq B_k
$$
for any $1\leq k\leq r$. Again by Lemma \ref{thm::observation}, we can conclude that $t(F)\leq t(F')\leq t_{\max}$ and the sum $F$ satisfies the inequality for any $1\leq k\leq r$. This finishes the proof.
\end{proof}

Next we prove upper bounds on the truants of non-leaf nodes of depth $3$ and depth $4$. 

\begin{proposition}
\label{thm::truant3}
Excluding a set of $3052$ nodes of depth $3$ with $m_1,m_2\leq161$ and $m_3\leq78$, any other node $F$ of depth $3$ has truant $t(F)\leq644$. In particular, a ternary sum is universal only if it is among the $3052$ excluded nodes.
\end{proposition}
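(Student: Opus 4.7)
The plan is to invoke Lemma \ref{thm::boundingtruant} separately for each admissible triple of coefficients $(a_1,a_2,a_3)$, reducing the problem to a finite (but substantial) computer verification.

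First, I would enumerate the admissible coefficient triples. By Proposition \ref{thm::truant2}, every depth-$2$ node $F_2 = a_1 P_{m_1} + a_2 P_{m_2}$ in $T_{\infty}$ satisfies $a_1 = 1$ and $a_2 \in \{1,2,3\}$, with $t(F_2) \leq 20$ (the largest entry in Table \ref{tbl::truant2}). A depth-$3$ child appends $a_3 P_{m_3}$ with $a_2 \leq a_3 \leq t(F_2) \leq 20$, so only finitely many triples $(a_1,a_2,a_3)$ arise.

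Second, for each fixed triple I would apply Lemma \ref{thm::boundingtruant} with $r=3$, $A_1=A_2=A_3=3$, $B_1=B_2=161$, and $B_3=78$. To verify its hypothesis, one must exhibit, for each index $i\in\{1,2,3\}$ and each fixed choice of the other two parameters $m_j\in[A_j,B_j]$, at least one non-universal sum $F'$ with $m_i'\in[A_i,B_i]$ whose truant satisfies $\lfloor t(F')/a_k\rfloor+3\leq B_k$ for every $k$. In practice one picks $m_i'$ small, so that $F'$ has small truant; the verification is carried out by directly computing truants of all sums in the finite rectangle $[A_1,B_1]\times[A_2,B_2]\times[A_3,B_3]$ (respecting the exclusion $m_i\neq 6$). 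The same computation produces $t_{\max}=644$ as the largest finite truant occurring in the rectangle and simultaneously identifies the exceptional list of $3052$ sums whose truant exceeds $644$ (including the candidates for universality). Granting the hypothesis, Lemma \ref{thm::boundingtruant} then implies that any depth-$3$ node with some $m_j>B_j$ has truant at most $644$. Combining this with the direct computation inside the rectangle yields the proposition, and the final clause is immediate: a universal ternary sum has no finite truant, so it must lie in the $3052$-element exceptional list.

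The main obstacle is the scale of the computation. For each admissible $(a_1,a_2,a_3)$ one must compute the truant of every sum $a_1 P_{m_1}+a_2 P_{m_2}+a_3 P_{m_3}$ with parameters in the rectangle — on the order of millions of sums in total — each requiring a representability check up to at least $644$. Choosing the bounds $B_i$ just large enough that the hypothesis of Lemma \ref{thm::boundingtruant} holds for a well-chosen $F'$, while keeping the exceptional list small and the computation tractable, is the delicate technical point.
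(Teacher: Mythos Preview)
Your proposal is correct and follows essentially the same route as the paper: fix $A_1=A_2=A_3=3$, verify by computer that $B_1=B_2=161$, $B_3=78$ satisfy the hypothesis of Lemma~\ref{thm::boundingtruant} for every admissible coefficient triple, and then read off both the bound $644$ and the exceptional set of $3052$ nodes from the finite rectangular search. The only point worth noting is that the paper checks representability up to $3000$ (not merely up to $644$), which is what justifies calling the $3052$ exceptional nodes genuine candidates for universality rather than merely nodes with truant in $(644,3000]$.
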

\begin{proof}
With $A_1=A_2=A_3=3$, we search for the integers $B_1,B_2,B_3$ satisfying the assumptions of Lemma \ref{thm::boundingtruant} for each choice of integers $a_1,a_2,a_3$. By a computer program, we can verify that the integers $A_1=A_2=A_3=3$ and $B_1=B_2=161,B_3=78$ satisfy the assumptions for any possible choice of integers $a_1,a_2,a_3$ of nodes in the escalator tree $T_{\infty}$. By a computer program to check the representations by any node of depth $3$ with parameters $A_i\leq m_i\leq B_i$ for any $1\leq i\leq 3$ up to $3000$, we find that the truant of any node is bounded above by $644$ except $3052$ nodes representing every positive integer up to $3000$. Hence, the proposition follows from Lemma \ref{thm::boundingtruant}.
\end{proof}
\begin{remark}
Since we can not verify the universality of all of the excluded nodes, we only have an implicit upper bound on the truants of these nodes and the truants of their children by Lemma \ref{thm::observation}, which causes the ineffectiveness of the finiteness theorems.
\end{remark}

\begin{proposition}
\label{thm::truant4}
For the nodes of depth $4$, we have the following facts:
\begin{enumerate}[leftmargin=*]
\item Suppose that $m_i\leq1883$ for at least three indices $\{1\leq i\leq4\}$. The truant of a non-leaf node $a_1P_{m_1}+a_2P_{m_2}+a_3P_{m_3}+a_4P_{m_4}$ is bounded above by an implicit constant.
\item Suppose that $m_i\leq1883$ for at most two indices $\{1\leq i\leq4\}$. Excluding a set of finitely many potentially universal nodes, any other node $a_1P_{m_1}+a_2P_{m_2}+a_3P_{m_3}+a_4P_{m_4}$ is not universal and the truant is bounded above by $1880$.
\end{enumerate}
\end{proposition}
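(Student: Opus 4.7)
The plan is to mimic the proof of Proposition~\ref{thm::truant3}, applying Lemma~\ref{thm::boundingtruant} to each admissible quadruple of coefficients $(a_1,a_2,a_3,a_4)$ that can appear at depth~$4$ in $T_{\infty}$. Because every parent of a depth-$4$ node is a non-leaf depth-$3$ node, Proposition~\ref{thm::truant3} together with the escalator constraint $a_4\leq t(\text{parent})$ shows the set of admissible quadruples is finite: either the parent is one of the non-exceptional ternary sums and hence $a_4\leq 644$, or the parent is one of the $3052$ potentially universal ternary nodes, for which $a_1,a_2,a_3$ and $m_1,m_2,m_3$ are already confined to explicit finite ranges and $a_4$ is bounded by the (implicit but finite) truant of the parent. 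In either situation there are finitely many coefficient tuples to treat.

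For part~(1), three of the exponents $m_i$ lie in $[3,1883]$, and at most one, say $m_4$, exceeds $1883$. When $m_4\leq 1883$ the truant $t(F)$ is obtained directly from the finite search described in part~(2). Otherwise, we invoke Lemma~\ref{thm::observation} with $i=4$ and a reference sum $F'=a_1P_{m_1}+a_2P_{m_2}+a_3P_{m_3}+a_4P_{m_4'}$ with $m_4'\in[3,1883]$ chosen so that $F'$ is non-universal and $m_4\geq 4+\lfloor t(F')/a_4\rfloor$; such an $F'$ exists among the finitely many sums already enumerated. Lemma~\ref{thm::observation} then gives $t(F)\leq t(F')$. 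The bound that emerges is only implicit because this enumeration may involve one of the potentially universal ternary parents, whose exact truant is beyond current methods.

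For part~(2), at least two of the $m_i$ are $\geq 1884$, so we apply Lemma~\ref{thm::boundingtruant} with $A_i=3$ and $B_i=1883$ for every $i$. Its hypothesis reduces to the following finite computation: for each admissible quadruple and for each way of fixing three exponents in $[3,1883]$, we must produce a non-universal reference sum with the fourth exponent also in $[3,1883]$ whose truant $t(F')$ satisfies $\lfloor t(F')/a_k\rfloor+3\leq 1883$ for every $k$. A computer program that tests representations of integers up to a modest cutoff (large enough to detect any truant up to $1880$) identifies precisely those quadruples for which no such reference exists; these constitute the finite exceptional set of potentially universal depth-$4$ nodes. For every other quadruple, the maximum truant observed in the search is $1880$, and Lemma~\ref{thm::boundingtruant} propagates this bound to every depth-$4$ node with at least two exponents exceeding $1883$.

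The main obstacle is purely computational: the number of tuples $(a_1,\ldots,a_4,m_1,\ldots,m_4)$ over which we must verify Lemma~\ref{thm::boundingtruant}'s hypothesis is enormous, so the search must be organised carefully—iterating the reduction of Lemma~\ref{thm::observation} one coordinate at a time, and exploiting the symmetry between equal coefficients. The appearance of potentially universal ternary parents (see Remark~\ref{rmk::ineffective}) is precisely what forces the bound in part~(1) to remain implicit and is the ultimate source of the ineffectiveness flagged there.
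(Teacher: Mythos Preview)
Your treatment of part~(1) matches the paper's: a single application of Lemma~\ref{thm::observation} in the one possibly large coordinate, with ineffectiveness inherited from the unresolved ternary parents.

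Part~(2), however, has a genuine gap in the choice of parameters for Lemma~\ref{thm::boundingtruant}. With $A_i=3$, $B_i=1883$ for every $i$, the hypothesis (say for $i=4$) requires that for \emph{every} triple $(m_1,m_2,m_3)\in[3,1883]^3$ there be a non-universal reference $F'$ with $m_4'\in[3,1883]$ and small truant. But whenever $(a_1,a_2,a_3;m_1,m_2,m_3)$ happens to give a universal ternary sum---and Table~\ref{tbl::confirmedcases} exhibits many such---every extension by $a_4P_{m_4'}$ is again universal, so no reference exists and the hypothesis fails outright for that coefficient quadruple. You then declare the failing coefficient quadruples to be the ``finite exceptional set of potentially universal depth-$4$ nodes,'' but this misreads the statement: the proposition excludes finitely many specific \emph{nodes} (fixed values of all $m_i$), whereas each failing coefficient quadruple still carries infinitely many depth-$4$ sums as the large $m_i$ range over $[1884,\infty)$, most of which are certainly \emph{not} universal. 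Lemma~\ref{thm::boundingtruant} simply yields nothing for them under your parameters.

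The paper sidesteps this by taking $A_i=B_i=1883$ for each of the (at least two) indices with $m_i>1883$, and $A_i=3$, $B_i=1883$ only for the (at most two) small indices. Every reference sum appearing in the hypothesis check then carries at least two components with $m=1883$; in particular it is never a child of one of the $3052$ potentially universal ternaries of Proposition~\ref{thm::truant3} (all of which have every $m_j\leq 161$). This is what makes the hypothesis verifiable by a finite computer search, keeps the excluded set of nodes genuinely finite, and produces the explicit bound $1880$.
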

\begin{proof}
(1) Set $M=1883$. For any non-leaf node of depth $4$ with $m_i\leq M$ for at least three indices in $\{1\leq i\leq4\}$, the truant is bounded by an implicit constant by applying Lemma \ref{thm::observation}, including the potential children of excluded nodes of depth $3$.

\noindent (2) For any node of depth $4$ with $m_i\leq M$ for at most two indices in $\{1\leq i\leq4\}$, say $1\leq j,k\leq 4$, we can use a computer program to verify that the integers $A_i=3,B_i=M$ for $i=j,k$ and $A_i=B_i=M$ for $i\neq j,k$ satisfy the assumptions of Lemma \ref{thm::boundingtruant}. Thus, by a computer program, we see that the truant of any node with $m_i\leq M$ for at most two indices is bounded above by $1880$, excluding a set of finitely many potentially universal nodes.
\end{proof}
\begin{remark}
From the example in Remark \ref{rmk::ineffective}, in fact we exclude an infinite set of universal nodes with $m_i\leq 1883$ for exactly three indices.
\end{remark}

For dealing with nodes of higher depth, the naive enumeration is not practical. Thus we use analytic methods instead. To finish this section, we prove some properties that will be used later.

\begin{lemma}
\label{thm::threequarter}
Fix a prime number $p\geq3$. Suppose that $b_1,b_2\in\ZZ_p^{\times}$, $b_3\in\ZZ_p$, and $\alpha\in p\ZZ_p$ satisfying either $\alpha\in p^2\ZZ_p$ or $b_3\in\ZZ_p^{\times}$. If the polynomial $f(x_1,x_2,x_3)=b_1x_1^2+c_1x_1+b_2x_2^2+c_2x_2+\alpha(b_3x_3^2+c_3x_3)\in\ZZ_p[x_1,x_2,x_3]$ represents every class in $\ZZ/p^2\ZZ$, then 
$$
\legendre{-b_1b_2}{p}=1.
$$
\end{lemma}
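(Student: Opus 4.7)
The plan is to argue by contrapositive: assuming $-b_1b_2$ is a nonsquare modulo $p$, I would construct a residue class modulo $p^2$ that $f$ fails to represent. Since $p$ is odd and $b_1,b_2\in\ZZ_p^\times$, both $2b_1$ and $2b_2$ are units, so completing the square in $x_1$ and $x_2$ via $y_i=x_i+c_i/(2b_i)$ defines bijections of $\ZZ_p$. This reduces the study of $f$ modulo $p^2$ to the polynomial
$$
g(y_1,y_2,x_3)\coloneqq b_1y_1^2+b_2y_2^2+\alpha(b_3x_3^2+c_3x_3),
$$
up to an absorbed additive constant lying in $\ZZ_p$.

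Next I would process the $x_3$-term using the dichotomy in the hypothesis. If $\alpha\in p^2\ZZ_p$, the $x_3$-contribution vanishes modulo $p^2$ outright. If instead $b_3\in\ZZ_p^\times$, I would complete the square in $x_3$ as well to obtain $\alpha(b_3x_3^2+c_3x_3)=p\beta_0 y_3^2-\alpha c_3^2/(4b_3)$ with $\beta_0\coloneqq\alpha b_3/p\in\ZZ_p$. Either way, after one more translation by a fixed constant, the problem modulo $p^2$ reduces to whether
$$
h(y_1,y_2,y_3)\coloneqq b_1y_1^2+b_2y_2^2+p\beta_0 y_3^2
$$
represents every residue class, where $\beta_0\in\ZZ_p$ is arbitrary (and we set $\beta_0=0$ in the first subcase).

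The heart of the argument is to test classes of the shape $pk$ with $p\nmid k$. If $h(y_1,y_2,y_3)\equiv pk\pmod{p^2}$, then reducing modulo $p$ gives $b_1y_1^2\equiv -b_2y_2^2\pmod{p}$, and the nonresidue hypothesis $\left(\tfrac{-b_1b_2}{p}\right)=-1$ forces $y_1\equiv y_2\equiv 0\pmod{p}$. Hence $b_1y_1^2+b_2y_2^2\in p^2\ZZ_p$ and the congruence collapses to $\beta_0y_3^2\equiv k\pmod{p}$. But the image of the map $y_3\mapsto\beta_0y_3^2$ in $\ZZ/p\ZZ$ has cardinality at most $(p+1)/2$ (either $\{0\}$ when $p\mid\beta_0$, or $\{0\}$ together with $\beta_0$ times the nonzero squares otherwise), so since $p\geq 3$ I can select $k\in\{1,\ldots,p-1\}$ outside this image; the corresponding class modulo $p^2$ is then not represented, contradicting the universality assumption.

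The only mildly delicate point is to verify that the completing-the-square manipulations are valid $p$-adically across both subcases, which follows immediately from $p$ being odd and from the unit hypotheses on $b_1$, $b_2$, and (when needed) $b_3$. Everything else boils down to a short Legendre symbol computation together with the elementary observation that squares fill only half of $(\ZZ/p\ZZ)^\times$.
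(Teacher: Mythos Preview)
Your proof is correct and follows essentially the same route as the paper's: complete the square in $x_1,x_2$ (and in $x_3$ when $b_3\in\ZZ_p^\times$), test the class $pk$ for a suitably chosen $k$, and use the Legendre-symbol condition to force $y_1\equiv y_2\equiv 0\pmod p$. The only cosmetic difference is that you frame the argument as a contrapositive whereas the paper argues directly, picking $k=1$ in the first subcase and $k$ with $b_3k$ a nonsquare in the second, then reading off that $-b_1b_2$ is a square from the resulting relation $b_1w_1^2+b_2w_2^2\equiv 0\pmod p$ with one $w_i$ a unit.
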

\begin{proof}
First assume that $\alpha\in p^2\ZZ_p$. Replacing $x_i$ by $x_i-\frac{c_i}{2b_i}$ for $1\leq i\leq 2$, we see that $b_1x_1^2+b_2x_2^2$ represents every class in $\ZZ/p^2\ZZ$ by the assumption. In particular, there exist $w_1,w_2\in\ZZ_p$ such that $b_1w_1^2+b_2w_2^2\equiv p\pmod{p^2}$. Hence, we have $b_1w_1^2+b_2w_2^2\equiv0\pmod{p}$ such that either $w_1\in\ZZ_p^{\times}$ or $w_2\in\ZZ_p^{\times}$. Without loss of generality, assume that $w_1\in\ZZ_p^{\times}$. Then, we see that $(b_2w_2/w_1)^2\equiv-b_1b_2\pmod{p}$, which is the desired result.

Next assume that $b_3\in\ZZ_p^{\times}$ and $\alpha\in p\ZZ_p^{\times}$. Without loss of generality, we may assume that $\alpha=p$. Replacing $x_i$ by $x_i-\frac{c_i}{2b_i}$ for $1\leq i\leq 3$, we see that $b_1x_1^2+b_2x_2^2+pb_3x_3^2$ represents every class in $\ZZ/p^2\ZZ$ by the assumption. Thus there exist $w_1,w_2,w_3\in\ZZ_p$ such that $b_1w_1^2+b_2w_2^2+pb_3w_3^2\equiv kp\pmod{p^2}$ with $k\in\ZZ_p^{\times}$ such that $b_3k$ is a non-square modulo $p$. Hence we see that $b_1w_1^2+b_2w_2^2\equiv0\pmod{p}$ such that either $w_1\in\ZZ_p^{\times}$ or $w_2\in\ZZ_p^{\times}$, and the proof follows as in the previous case.
\end{proof}

\begin{lemma}
\label{thm::nondyadicgrowth}
Suppose that $p\geq3$ is a prime number. Let $F=a_1P_{m_1}+a_2P_{m_2}+a_3P_{m_3}$ be a node in $T_{\infty}$ such that $p\nmid a_1a_2(m_1-2)(m_2-2)$. If one of the following conditions holds
\begin{enumerate}[leftmargin=*]
\item $p^2\mid a_3$,
\item $p\mid a_3$, $p\nmid(m_3-2)$ and $t(F)\geq p^2$,
\end{enumerate}
then we have
$$
\legendre{-a_1a_2(m_1-2)(m_2-2)}{p}=1.
$$
\end{lemma}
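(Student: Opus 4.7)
The plan is to reduce the statement to Lemma~\ref{thm::threequarter} by (i) massaging $2F$ into the exact polynomial shape required by that lemma, and (ii) verifying that $F$ represents every class in $\ZZ/p^2\ZZ$. For (i), I clear the denominator in $P_m(x) = \frac{(m-2)x^2 - (m-4)x}{2}$ to obtain
\[
2F(x_1,x_2,x_3) = \sum_{i=1}^{3}\bigl(a_i(m_i-2)x_i^2 - a_i(m_i-4)x_i\bigr).
\]
Writing $a_3 = p^v u$ with $u \in \ZZ_p^{\times}$ and $v \geq 1$, I set $b_i = a_i(m_i-2)$, $c_i = -a_i(m_i-4)$ for $i=1,2$, and $b_3 = u(m_3-2)$, $c_3 = -u(m_3-4)$, $\alpha = p^v$. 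This puts $2F$ in the exact shape of the polynomial $f$ in Lemma~\ref{thm::threequarter}. The hypothesis $p \nmid a_1 a_2 (m_1-2)(m_2-2)$ gives $b_1, b_2 \in \ZZ_p^{\times}$. Under condition (1), $p^2 \mid a_3$ forces $v \geq 2$, so $\alpha \in p^2 \ZZ_p$. Under condition (2), either $v \geq 2$ (so again $\alpha \in p^2 \ZZ_p$) or $v = 1$, in which case $\alpha = p$ and $b_3 = u(m_3-2) \in \ZZ_p^{\times}$ since $p \nmid u(m_3-2)$. In every sub-case the hypotheses of Lemma~\ref{thm::threequarter} are met.

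For (ii), condition (2) is easy: the assumption $t(F) \geq p^2$ means that $F$ represents $1, 2, \dots, p^2-1$, and $F(0,0,0)=0$ covers the zero class, so $F$ hits every residue modulo $p^2$. Condition (1) is the less direct case, since $F$ itself need not have large truant. Here I invoke the escalator tree construction: the coefficient $a_3$ of the newly adjoined polygonal term satisfies $a_3 \leq t(F')$, where $F' = a_1 P_{m_1} + a_2 P_{m_2}$ is the parent of $F$. Because $a_3$ is a positive integer with $p^2 \mid a_3$, we have $a_3 \geq p^2$, hence $t(F') \geq p^2$, so $F'$ already represents every class in $\ZZ/p^2\ZZ$. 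The congruence $F \equiv F' \pmod{p^2}$, which uses $p^2 \mid a_3$, then transfers the property to $F$. Since $p \geq 3$, multiplication by $2$ is a bijection on $\ZZ/p^2\ZZ$, so $2F$ also represents every class.

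Combining (i) and (ii), Lemma~\ref{thm::threequarter} applied to $2F$ yields $\legendre{-b_1 b_2}{p} = 1$, and substituting $b_1 b_2 = a_1 a_2 (m_1-2)(m_2-2)$ gives the conclusion. I expect the only non-routine step to be case (1) in part (ii): the argument hinges on recognizing that the divisibility assumption $p^2 \mid a_3$ secretly encodes (via the escalator construction) a truant bound on the parent $F'$ that is strong enough to guarantee full surjectivity modulo $p^2$. Everything else is substitution and tracking of $p$-adic valuations.
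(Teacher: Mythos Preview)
Your proof is correct and follows essentially the same approach as the paper: reduce to Lemma~\ref{thm::threequarter} after checking that $F$ (equivalently $2F$) represents every residue class modulo $p^2$, using the escalator-tree bound $a_3\leq t(F')$ to handle condition~(1). The only cosmetic difference is that the paper phrases case~(1) as ``$t(F')\geq p^2$ hence $t(F)\geq p^2$'' (via setting $x_3=0$), whereas you transfer surjectivity mod $p^2$ from $F'$ to $F$ by the congruence $F\equiv F'\pmod{p^2}$; both arguments are equivalent.
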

\begin{proof}
If $p^2\mid a_3$, then the construction of $T_{\infty}$ implies that $t(a_1P_{m_1}+a_2P_{m_2})\geq p^2$. Hence $t(F)\geq p^2$ in both cases and thus $F$ represents every class in $\ZZ/p^2\ZZ$. The claim then follows by Lemma \ref{thm::threequarter}.
\end{proof}

\begin{lemma}
\label{thm::dyadicgrowth}
Let $F=P_{m_1}+a_2P_{m_2}+a_3P_{m_3}$ be a node in the tree $T_{\infty}$ with $4\mid m_1,m_2,m_3$. The following facts hold.
\begin{enumerate}[leftmargin=*]
\item If $a_2=1$ and $a_3=1$, then we have $t(F)\leq46$. If $t(F)\geq8$, then either $(m_1,m_2)=(4,8)$ or $(m_1,m_2,m_3)=(4,4,8)$.
\item If $a_2=1$ and $a_3=2$, then we have $t(F)\leq71$. If $t(F)\geq16$, then $(m_1,m_2)=(8,12)$ or $(m_1,m_2,m_3)\in \{(4,4,8),(4,12,4),(4,16,4),(8,8,4),(8,16,8)\}$.
\item If $a_2=1$ and $a_3=3$, then we have $t(F)\leq38$. If $t(F)\geq8$, then $(m_1,m_2)=(4,8)$.
\item If $a_2=2$, then we have $t(F)\leq 7$ if $a_3=2$, $t(F)\leq10$ if $a_3=3$, $t(F)\leq15$ if $a_3=4$, and $t(F)\leq12$ if $a_3=5$.
\end{enumerate}
\end{lemma}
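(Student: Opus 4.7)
The plan is to proceed along the lines of Propositions \ref{thm::truant3} and \ref{thm::truant4} by combining Lemma \ref{thm::boundingtruant} with a finite computer enumeration, exploiting the severe restrictions imposed by the divisibility $4\mid m_1,m_2,m_3$ together with the tree construction. First, inspection of Table \ref{tbl::truant2} shows that for any parent node $P_{m_1}+a_2P_{m_2}$ with $4\mid m_1,m_2$, the truant equals $3$ when $a_2=1$ and is at most $5$ when $a_2=2$. Since the escalator rule forces $a_2\leq a_3\leq t(P_{m_1}+a_2 P_{m_2})$, this immediately restricts to $a_3\in\{1,2,3\}$ in cases (1)--(3) and $a_3\in\{2,3,4,5\}$ in case (4), matching exactly the enumeration in the statement.

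For each such pair $(a_2,a_3)$, I would apply Lemma \ref{thm::boundingtruant} with $A_1=A_2=A_3=4$ and with $B_1,B_2,B_3$ chosen as sufficiently large multiples of $4$. Because the parent truants are at most $5$ and the target upper bounds on $t(F)$ are all under $72$, the $B_i$ can be taken modest, and a direct enumeration of all ternary sums $F$ with $4\mid m_i$ and $4\leq m_i\leq B_i$ verifies the explicit upper bounds $46$, $71$, $38$ in cases (1)--(3), and the bounds $7,10,15,12$ in case (4). The same enumeration simultaneously extracts the exact list of pairs $(m_1,m_2)$ or triples $(m_1,m_2,m_3)$ in the search window for which the truant meets or exceeds the threshold $8$, $16$, $8$, producing the exceptional tuples quoted in the statement. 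For parameters lying outside the search window, Lemma \ref{thm::observation} transfers the truant bound from a node already covered by the enumeration, so the claimed global upper bounds are inherited.

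The main obstacle will be the preparatory verification required by Lemma \ref{thm::boundingtruant}: for each pair $(a_2,a_3)$ and each index $i$, one must confirm that for every admissible tuple of the remaining parameters, there is a non-universal sum $F'$ obtained by replacing $m_i$ with some $m_i'\in[A_i,B_i]$ whose truant satisfies $\lfloor t(F')/a_k\rfloor+3\leq B_k$ for every $k$. Since all relevant truants are small, this reduces to computing the truants of finitely many ternary sums up to a modest cutoff, which is purely mechanical. Once these input conditions are checked, all four cases of the lemma follow simultaneously by reading off the output of the enumeration.
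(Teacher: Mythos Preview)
Your approach is valid but takes a heavier route than the paper. The paper's proof is a single observation: for $m\geq 4$ one has $P_m(x)\in\{0,1,m-3,m,3m-8\}$ when $x\in\{0,\pm 1,\pm 2\}$, and $P_m(x)\geq 3m-3$ otherwise. This reduces the whole lemma to a direct case analysis: to decide whether a given small $n$ is represented by $F$, one need only test combinations drawn from this explicit list of five values for each variable, and once $m_i$ is large enough the available values collapse to $\{0,1\}$. Both the truant bounds and the classification of exceptional tuples then fall out of the same hand calculation, with no appeal to Lemma~\ref{thm::boundingtruant} or to a computer search.

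Your route via Lemma~\ref{thm::boundingtruant} should also succeed, but two points require care. First, Lemma~\ref{thm::boundingtruant} as stated quantifies over all integers $A_j\leq m_j\leq B_j$, not merely multiples of $4$; you must either check its hypothesis over that larger range or note that its proof (which rests only on Lemma~\ref{thm::observation}) goes through verbatim when both hypothesis and conclusion are restricted to $4\mid m_i$. Second, the conclusion of Lemma~\ref{thm::boundingtruant} is only $t(F)\leq t_{\max}$, which delivers the coarse upper bounds ($46$, $71$, etc.) but not directly the finer claims of the form ``if $t(F)\geq 8$ then $(m_1,m_2)=\ldots$''. For those you need the constancy clause of Lemma~\ref{thm::observation}, applied separately for each fixed non-exceptional $(m_1,m_2)$, to propagate the sharp bound $t(F)\leq 7$ (respectively $\leq 15$) to all large $m_3$. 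Your final paragraph gestures at this, but the paper's explicit-values argument handles both the bounds and the classification in one stroke, which is why it can be dispatched in a single sentence.
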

\begin{proof}
This follows from a straightforward calculation that uses the fact that either $P_m(x)\in\{0,1,m-3,m,3m-8\}$ or $P_m(x)\geq 3m-3$. 
\end{proof}

\section{Representations by Shifted Lattices}
\label{sec::shiftedlattice}

For an integer $r\geq1$, a \textit{quadratic space} $V$ of dimension $r$ is a vector space over $\QQ$ of dimension $r$ equipped with a symmetric bilinear form $B\colon V\times V\to\QQ$. We can associate $V$ with a \textit{quadratic map} $Q\colon V\to\QQ$ defined by $Q(v)\coloneqq B(v,v)$ for any vector $v\in V$. A quadratic space $V$ is \textit{positive-definite} if $Q(v)>0$ for any non-zero vector $v\in V$. A \textit{lattice} $L$ in a quadratic space $V$ of dimension $r$ is a free $\ZZ$-submodule of $V$ of rank $r$. A lattice $L$ is \textit{integral} if $B(L,L)\subseteq\ZZ$. Set $L^{\dual}\coloneqq\{v\in V\mid2B(v,L)\subseteq\ZZ\}$. For an integral lattice $L$, the \textit{discriminant} is defined as $[L^{\dual}\colon L]$ and the \textit{level} is defined as the least positive integer $N$ such that $NQ(v)\in\ZZ$ for any vector $v\in L^{\dual}$.

A \textit{shifted lattice} $X$ in a quadratic space $V$ is a subset of $V$ of the form $L+\nu$, where $L$ is a lattice in $V$ and $\nu$ is a vector of $V$. A shifted lattice is \textit{positive-definite} if the underlying quadratic space $V$ is positive-definite. A shifted lattice $X$ is \textit{integral} if $B(X,X)\subseteq \ZZ$. The \textit{rank} of a shifted lattice $X=L+\nu$ is the rank of $L$ as a free $\ZZ$-module. For an integral shifted lattice $X$, a \textit{base lattice} of a shifted lattice $X$ is an integral lattice $L_X$ containing $X$. An integral shifted lattice always admits a base lattice because the lattice generated by $X$ is a base lattice of $X$. The \textit{discriminant}, the \textit{level}, and the \textit{conductor} of an integral shifted lattice $X$ relative to a base lattice $L_X$ of $X$ are defined as the discriminant of $L_X$, the level of $L_X$, and the least positive integer such that $ML_X\subseteq L$. For a shifted lattice $X=L+\nu$ of conductor $M$ and $a,d\in\ZZ$ with $ad\equiv 1\pmod{M}$, we let $\shiftinverse{X}{d}$ be the shifted lattice $L+a\nu$. Finally, we say that an element $n\in\QQ$ is \textit{represented} by a shifted lattice $X$ or equivalently the shifted lattice $X$ \textit{represents} an element $n\in\QQ$ if there is a vector $v\in X$ such that $Q(v)=n$. Such a vector $v\in X$ is called a \textit{representation} of $n$ by $X$. The \textit{number of representations} of $n$ by $X$ is denoted by $r_X(n)$.

Let $F$ be a sum of generalized polygonal numbers. In Section \ref{sec::Eisenstein}, we will construct a shifted lattice $X$ together with two integers $\mu\geq1,\rho\in\ZZ$ such that $r_F(n)=r_X(\mu n+\rho)$ for any integer $n\in\ZZ$. Therefore, to study the representations by $F$, it is equivalent to study the representations by the shifted lattice $X$. Let $\HH=\{\tau\in\CC\mid\imag(\tau)>0\}$ be the complex upper half-plane. For a positive-definite integral shifted lattice $X$, the theta series $\Theta_X\colon\HH\to\CC$ associated to $X$ is defined as
$$
\Theta_X(\tau)\coloneqq\sum_{v\in X}e^{2\pi i\tau Q(v)}=\sum_{n\in\ZZ}r_X(n)e^{2\pi i\tau n}.
$$
By \cite[Proposition 2.1]{shimura1973modular} and the theory of modular forms, the theta series $\Theta_X$ is a modular form, which splits into a sum of an Eisenstein series $E_X$ and a cusp form $G_X$. Let $a_{E_X}(n)$ and $a_{G_X}(n)$ denote the $n$-th Fourier coefficient of $E_X$ and $G_X$ for any integer $n\geq0$, respectively. We have 
\begin{equation}\label{eqn:rXnsplit}
r_X(n)=a_{E_X}(n)+a_{G_X}(n).
\end{equation}
By estimating the Fourier coefficients $a_{E_X}(n)$ and $a_{G_X}(n)$, we can show that $r_X(n)$ is positive.

To estimate the Eisenstein part, we apply Siegel's analytic theory of quadratic forms. Note that historically Siegel's papers \cite{siegel1935uber,siegel1936uber,siegel1937uber,siegel1951indefinite,siegel1951indefinite2} did not cover the cases of positive-definite shifted lattices. However, Siegel's results were later generalized to include positive-definite shifted lattices, for example, see \cite{van1949theory,weil1965,shimura2004inhomogeneous}.

The main tool is the Siegel--Minkowski formula, which interprets the Fourier coefficient of the Eisenstein part in terms of a product of local densities. We shall use the formulation given in \cite[(1.15)]{shimura2004inhomogeneous}. Suppose that the base lattice of the shifted lattice $X$ is denoted by $L_X$ and the quadratic map is denoted by $Q$. For $r\geq2$, we have
\begin{equation}
\label{eqn::siegelminkowski}
a_{E_X}(n)=\frac{c_r(2\pi)^{\frac{r}{2}}n^{\frac{r}{2}-1}}{[L_X^{\dual}\colon L_X]^{\frac{1}{2}}\Gamma(\frac{r}{2})}\prod_{p}\beta_p(n;X),
\end{equation}
where $\Gamma(x)$ is the usual Gamma function and we have $c_r\coloneqq1$ if $r\neq2$ and $c_2=\frac{1}{2}$; the quantities $\beta_p(n;X)$ are the local densities of $X$, which is defined as follows. Let $V_p$, $L_{X,p}$, and $X_p$ be the localizations of the quadratic space $V$, the base lattice $L_X$, and the shifted lattice $X$ at $p$, respectively. We choose the unique Haar measures $\mathrm{d}v$ on $V_p$ and $\mathrm{d}\sigma$ on $\QQ_p$ such that 
$$
\int_{L_{X,p}}\mathrm{d}v=\int_{\ZZ_p}\mathrm{d}\sigma=1.
$$
For any positive integer $n$ and any prime number $p$, the \textit{local density} $\beta_p(n;X)$ of a shifted lattice $X$ in the quadratic space $V$ is defined as the integral
$$
\beta_p(n;X)\coloneqq\int_{\QQ_p}\int_{X_p}e_p\big(\sigma(Q(v)-n)\big)\mathrm{d}v\mathrm{d}\sigma.
$$
where the function $e_p\colon\QQ_p\to\CC$ is defined as follows. For any $p$-adic number $\alpha\in\QQ_p$, we define $e_p(\alpha)\coloneqq e^{-2\pi ia}$ for some rational number $a\in\bigcup_{t=1}^{\infty}p^{-t}\ZZ$ such that $\alpha-a\in\ZZ_p$. To estimate the Fourier coefficient of the Eisenstein using (\ref{eqn::siegelminkowski}), it is clear that we have to evaluate the local densities, which is the main goal in the coming section.

\section{Formulae for local densities and a lower bound on the Eisenstein part}\label{sec::Eisenstein}

In this section, we derive explicit formulae for local densities $\beta_p(n;X)$ by similar arguments in \cite[Section 2]{yang1998} and apply them to bound the Eisenstein part.

\subsection{An Explicit Formula for Non-dyadic Local Densities}\label{sec:nondyadic}

Fix a prime number $p\geq3$. Suppose that we have an integral shifted lattice $X=L+\nu$ in a quadratic space $V$ with the associated quadratic map denoted by $Q$ and choose a base lattice $L_X$. By Jordan canonical form theorem \cite[Theorem 5.2.4 and Section 5.3]{kitaoka1999arithmetic}, there exists a basis $\{\eta_1,\ldots,\eta_r\}$ of the localization $L_p$ such that the Gram matrix of $L_p$ is a diagonal matrix with entries $A_1,\ldots,A_r\in\ZZ_p$ and $\nu=s_1\eta_1+\cdots+s_r\eta_r$ with rational numbers $s_1,\ldots,s_r\in\QQ_p$. Then, we have
\begin{equation}
\label{eqn::localdensitystartupp}
\begin{aligned}
\beta_p(n;X)&=\int_{\QQ_p}\int_{X_p}e_p\big(\sigma(Q(v)-n)\big)\mathrm{d}v\mathrm{d}\sigma\\
&=p^{-\ord_p([L_X\colon L])}\int_{\QQ_p}\int_{\ZZ_p^r}e_p\bigg(\sigma\bigg(Q\bigg(\sum_{i=1}^{r}(x_i+s_i)\eta_i\bigg)-n\bigg)\bigg)\mathrm{d}x_1\cdots\mathrm{d}x_r\mathrm{d}\sigma\\
&=p^{-\ord_p([L_X\colon L])}\int_{\QQ_p}\int_{\ZZ_p^r}e_p\bigg(\sigma\bigg(\sum_{i=1}^{r}\left(A_ix_i^2+2A_is_ix_i+A_is_i^2\right)-n\bigg)\bigg)\mathrm{d}x_1\cdots\mathrm{d}x_r\mathrm{d}\sigma,
\end{aligned}
\end{equation}
Thus, it boils down to evaluating the integral
$$
I_p(n;\phi)\coloneqq\int_{\QQ_p}\int_{\ZZ_p^r}e_p\big(\sigma\left(\phi(\mathbf{x})-n\right)\big)\mathrm{d}\mathbf{x}\mathrm{d}\sigma,
$$
where $\mathbf{x}\coloneqq(x_1,\ldots,x_r)$ and $\phi(\mathbf{x})=\sum_{i=1}^{r}(b_ix_i^2+c_ix_i)$ with $b_i,c_i\in\ZZ_p$ for $1\leq i\leq r$.

First, we evaluate the integral
$$
G_p(\sigma;b,c)\coloneqq\int_{\ZZ_p}e_p\left(\sigma(bx^2+cx)\right)\mathrm{d}x,
$$
for $\sigma\in\QQ_p$ and $b,c\in\ZZ_p$.

\begin{lemma}
\label{thm::gaussintegral}
Set $\mathfrak{t}\coloneqq\min(\ord_p(b),\ord_p(c))$. Suppose that $\sigma=up^{t}\in\ZZ_p$ such that $u\in\ZZ_p^{\times}$ and $t\in\ZZ$. Then we have
$$
G_p(\sigma;b,c)=
\begin{cases}
\displaystyle 1,&\text{if }t+\mathfrak{t}\geq0,\\
\displaystyle 0,&\text{if }t+\mathfrak{t}<0,\ord_p(b)>\ord_p(c),\\
\displaystyle e_p\left(-\frac{\sigma c^2}{4b}\right)\cdot\gamma_p(\sigma b)\cdot p^{\frac{t+\mathfrak{t}}{2}},&\text{if }t+\mathfrak{t}<0,\ord_p(b)\leq\ord_p(c),\\
\end{cases}
$$
where for $x=u_xp^{t_x}$ with $u_x\in\ZZ_p^{\times}$ and $t_x\in\ZZ$, we define
$$
\gamma_p(x)=
\begin{dcases}
\displaystyle1,&\text{if }t_x\text{ is even},\\
\displaystyle\varepsilon_p^3\cdot\legendre{u_x}{p},&\text{if }t_x\text{ is odd},
\end{dcases}
$$
and
$$
\varepsilon_p\coloneqq
\begin{dcases}
1, &\text{if }p\equiv1\pmod{4},\\
i, &\text{if }p\equiv3\pmod{4}.
\end{dcases}
$$
\end{lemma}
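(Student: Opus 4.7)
The plan is to reduce $G_p(\sigma;b,c)$ to a finite exponential sum over a residue ring modulo a power of $p$ and to evaluate it by direct inspection, a translation/vanishing argument, or the classical quadratic Gauss sum formula. When $t+\mathfrak{t}\geq 0$ one has $\sigma b,\sigma c\in\ZZ_p$, so $\sigma(bx^2+cx)\in\ZZ_p$ for every $x\in\ZZ_p$; the integrand is identically $1$ and $G_p(\sigma;b,c)=1$.

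For the remaining cases set $\mathfrak{s}\coloneqq-(t+\mathfrak{t})>0$ and partition $\ZZ_p=\bigsqcup_{y\bmod p^{\mathfrak{s}}}(y+p^{\mathfrak{s}}\ZZ_p)$. The valuation bounds $\ord_p(\sigma b)\geq-\mathfrak{s}$ and $\ord_p(\sigma c)\geq-\mathfrak{s}$ (together with $p$ odd) force the cross terms arising from expanding $\sigma(b(y+p^{\mathfrak{s}}z)^2+c(y+p^{\mathfrak{s}}z))-\sigma(by^2+cy)$ to lie in $\ZZ_p$, so the integrand is constant on each coset and
\begin{equation*}
G_p(\sigma;b,c)=p^{-\mathfrak{s}}\sum_{y\bmod p^{\mathfrak{s}}}e_p\bigl(\sigma(by^2+cy)\bigr).
\end{equation*}
When $\ord_p(b)>\ord_p(c)$ one has the strict inequality $\ord_p(\sigma b)\geq 1-\mathfrak{s}>-\mathfrak{s}=\ord_p(\sigma c)$. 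Applying the bijection $y\mapsto y+p^{\mathfrak{s}-1}$ of $\ZZ/p^{\mathfrak{s}}\ZZ$, the $\sigma b$-cross terms again fall into $\ZZ_p$, while the linear shift contributes the overall factor $e_p(\sigma c\,p^{\mathfrak{s}-1})$, a nontrivial $p$-th root of unity since $\ord_p(\sigma c\,p^{\mathfrak{s}-1})=-1$. The sum then equals a nontrivial multiple of itself, so it must vanish.

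When $\ord_p(b)\leq\ord_p(c)$, the identification $\mathfrak{t}=\ord_p(b)$ and the oddness of $p$ place $c/(2b)\in\ZZ_p$; the translation $x\mapsto x-c/(2b)$ preserves $\ZZ_p$ and completes the square to give
\begin{equation*}
G_p(\sigma;b,c)=e_p\!\left(-\frac{\sigma c^2}{4b}\right)\int_{\ZZ_p}e_p(\sigma b\,x^2)\,\mathrm{d}x.
\end{equation*}
Writing $\sigma b=u_\alpha p^{-\mathfrak{s}}$ with $u_\alpha\in\ZZ_p^{\times}$ and redoing the coset decomposition, the remaining integral becomes $p^{-\mathfrak{s}}\sum_{y\bmod p^{\mathfrak{s}}}e_p(\sigma b\,y^2)$, which is the complex conjugate of the classical quadratic Gauss sum of modulus $p^{\mathfrak{s}}$ in the variable $y$ (the conjugation originating from the sign convention $e_p(\alpha)=e^{-2\pi i a}$). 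The standard evaluation for odd $p$ gives $p^{\mathfrak{s}/2}$ when $\mathfrak{s}$ is even and $\overline{\varepsilon_p}\left(\frac{u_\alpha}{p}\right)p^{\mathfrak{s}/2}=\varepsilon_p^{3}\left(\frac{u_\alpha}{p}\right)p^{\mathfrak{s}/2}$ when $\mathfrak{s}$ is odd; combined with the prefactor $p^{-\mathfrak{s}}$ this is precisely $\gamma_p(\sigma b)\,p^{(t+\mathfrak{t})/2}$.

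The main technical obstacle lies in the bookkeeping of sign conventions. The choice $e_p(\alpha)=e^{-2\pi i a}$ effects a complex conjugation relative to the usual quadratic Gauss sum, which is precisely why $\gamma_p$ is defined via $\varepsilon_p^{3}=\overline{\varepsilon_p}$ rather than $\varepsilon_p$; simultaneously one must track the parity of $\mathfrak{s}=-(t+\mathfrak{t})$ and the propagation of the Legendre symbol of $u_\alpha$ through the coset decomposition so that the closed-form expression emerges with exactly the prefactor stated.
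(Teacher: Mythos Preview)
Your proof is correct and follows essentially the same approach as the paper. The paper argues directly on the $p$-adic integral (translating $x\mapsto x+p^{\mathfrak{s}-1}$ for the vanishing case and $x\mapsto x-c/(2b)$ to complete the square), whereas you first reduce to a finite exponential sum over $\ZZ/p^{\mathfrak{s}}\ZZ$ and then perform the same translations; for the final Gauss integral the paper simply cites \cite[Lemma~2.1(1)]{yang1998}, while you spell out the classical quadratic Gauss sum evaluation and the $\varepsilon_p^{3}=\overline{\varepsilon_p}$ bookkeeping explicitly.
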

\begin{proof}
The first case is obvious. For the second case, replacing $x$ by $x+p^{-t-\mathfrak{t}-1}$ in the integral, we have $G_p(\sigma;b,c)=e_p(\sigma cp^{-t-\mathfrak{t}-1})\cdot G_p(\sigma;b,c)$, where the extra factors containing $b$ disappear because $e_p(x)=1$ for $x\in\ZZ_p$. Since $e_p(\sigma cp^{-t-\mathfrak{t}-1})\neq1$, the integral vanishes. For the last case, replacing $x$ by $x-\frac{c}{2b}$ in the integral, we have
$$
G_p(\sigma;b,c)=e_p\left(-\frac{\sigma c^2}{4b}\right)\cdot\int_{\ZZ_p}e_p(\sigma bx^2)\mathrm{d}x.
$$
Applying \cite[Lemma 2.1(1)]{yang1998}, we obtain the desired result.
\end{proof}

For the sake of simplicty, we define $\infty$ to be a formal symbol with the properties $t\leq\infty$ and $t+\infty=\infty+t=\infty$ for any integer $t\in\ZZ$. We set a convention that taking the minimum among an empty set outputs the formal symbol $\infty$.

\begin{theorem}
\label{thm::formulalocaldensityp}
Let $p\geq3$ be an odd prime number. Suppose that $n\in\ZZ_p$ and $\phi(\mathbf{x})=\sum_{i=1}^{r}(b_ix_i^2+c_ix_i)$ with $b_i,c_i\in\ZZ_p$ for $1\leq i\leq r$. For $1\leq i\leq r$, we define $t_i\coloneqq\min(\ord_p(b_i),\ord_p(c_i))$. We set
\begin{align*}
D_p\coloneqq\{1\leq i\leq r\mid\ord_p(b_i)>\ord_p(c_i)\},\\
N_p\coloneqq\{1\leq i\leq r\mid\ord_p(b_i)\leq\ord_p(c_i)\},
\end{align*}
and set $\td\coloneqq\min\{t_i\mid i\in D_p\}$. We further define
$$
\mathfrak{n}\coloneqq n+\sum_{i\in N_p}\frac{c_i^2}{4b_i}.
$$
If $\mathfrak{n}\neq0$, we assume that $\mathfrak{n}=\mathfrak{u}_np^{\mathfrak{t}_n}$ with $\mathfrak{u}_n\in\ZZ_p^{\times}$ and $\mathfrak{t}_n\in\ZZ$. Otherwise we set $\mathfrak{t}_n\coloneqq\infty$. For an integer $t\in\ZZ$, we define 
$$
\mathcal{L}_p(t)\coloneqq\{i\in N_p\mid t_i-t<0\text{ and odd}\},~\ell_p(t)\coloneqq|\mathcal{L}_p(t)|.
$$
Then, we have
$$
I_p(n;\phi)=1+\left(1-\frac{1}{p}\right)\sum_{\substack{1\leq t\leq\min(\td,\mathfrak{t}_n)\\ \ell_p(t)\text{ even}}}\delta_p(t)p^{\tau_p(t)}+\delta_p(\mathfrak{t}_n+1)\omega_pp^{\tau_p(\mathfrak{t}_n+1)},
$$
where for any integer $t\in\ZZ$, we define
$$
\delta_p(t)\coloneqq\varepsilon_p^{3\ell_p(t)}\prod_{i\in \mathcal{L}_p(t)}\legendre{u_i}{p},~\tau_p(t)\coloneqq t+\sum_{\substack{i\in N_p\\t_i<t}}\frac{t_i-t}{2},
$$
with $u_i\coloneqq p^{-\ord_p(b_i)}b_i\in\ZZ_p^{\times}$ and define
$$
\omega_p\coloneqq
\begin{dcases}
\displaystyle0,&\text{if }\mathfrak{t}_n\geq \td,\\
\displaystyle-\frac{1}{p},&\text{if }\mathfrak{t}_n<\td\text{ and }\ell_p(\mathfrak{t}_n+1)\text{ is even},\\
\displaystyle\varepsilon_p\legendre{\mathfrak{u}_n}{p}\frac{1}{\sqrt{p}},&\text{if }\mathfrak{t}_n<\td\text{ and }\ell_p(\mathfrak{t}_n+1)\text{ is odd}.
\end{dcases}
$$
\end{theorem}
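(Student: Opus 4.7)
The plan is to apply Fubini to write
$$
I_p(n;\phi) = \int_{\QQ_p} e_p(-\sigma n) \prod_{i=1}^{r} G_p(\sigma; b_i, c_i) \, d\sigma,
$$
and then to evaluate this integral slice by slice along the decomposition $\QQ_p = \ZZ_p \sqcup \bigsqcup_{t \geq 1} p^{-t}\ZZ_p^{\times}$. For $\sigma \in \ZZ_p$ every factor $G_p(\sigma; b_i, c_i)$ equals $1$ by the first case of Lemma \ref{thm::gaussintegral} and $e_p(-\sigma n) = 1$, giving the leading constant $1$. For $\sigma = u p^{-t}$ with $u \in \ZZ_p^{\times}$ and $t \geq 1$, the second case of Lemma \ref{thm::gaussintegral} forces $G_p(\sigma; b_i, c_i) = 0$ as soon as $t > t_i$ for some $i \in D_p$, so nonzero contributions are confined to $1 \leq t \leq \td$.

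Within this range the factors indexed by $D_p$ and by those $i \in N_p$ with $t_i \geq t$ are all $1$, while the third case of Lemma \ref{thm::gaussintegral} evaluates the remaining factors as $e_p(-\sigma c_i^2/(4b_i)) \gamma_p(\sigma b_i) p^{(t_i-t)/2}$ for $i \in N_p$ with $t_i < t$. The crucial observation is that for $i \in N_p$ one has $c_i^2/(4b_i) \in p^{t_i}\ZZ_p$, so when $t_i \geq t$ the corresponding would-be $e_p$ factor is already trivial; this lets me extend the $e_p$-product harmlessly over the entire index set $N_p$ and absorb it into $e_p(-\sigma n)$ to get $e_p(-\sigma \mathfrak{n})$. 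Writing $\sigma b_i = u u_i p^{t_i - t}$ for $i \in N_p$ and applying the definition of $\gamma_p$ collapses the sign factors into $\delta_p(t) \legendre{u}{p}^{\ell_p(t)}$, while the powers of $p$ combine into $p^{\tau_p(t) - t}$. After the change of variables $u = p^t \sigma$, which scales Haar measure by a factor of $p^t$, each slice reduces to
$$
\delta_p(t) \, p^{\tau_p(t)} \int_{\ZZ_p^{\times}} e_p(-p^{-t} u \mathfrak{n}) \legendre{u}{p}^{\ell_p(t)} \, du.
$$

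The final step is to evaluate this inner integral by breaking $\ZZ_p^{\times}$ into residue classes modulo $p$. When $\ell_p(t)$ is even the Legendre factor disappears and a short calculation gives $1 - 1/p$ if $t \leq \mathfrak{t}_n$, $-1/p$ if $t = \mathfrak{t}_n + 1$, and $0$ otherwise; when $\ell_p(t)$ is odd the integral vanishes unless $t = \mathfrak{t}_n + 1$, in which case the classical quadratic Gauss sum $\sum_{a=1}^{p-1} \legendre{a}{p} e^{2\pi i a/p} = \varepsilon_p \sqrt{p}$ produces the factor $\varepsilon_p \legendre{\mathfrak{u}_n}{p}/\sqrt{p}$. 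Summing over $1 \leq t \leq \td$ slots the even-$\ell_p(t)$ contributions with $t \leq \mathfrak{t}_n$ into the main sum (pulling out the global factor $1 - 1/p$) and merges the two cases at $t = \mathfrak{t}_n + 1$ into the single correction $\delta_p(\mathfrak{t}_n+1) \omega_p p^{\tau_p(\mathfrak{t}_n+1)}$, with $\omega_p$ falling exactly into the three-case definition in the statement (the $0$ case occurring when $\mathfrak{t}_n + 1 > \td$, so the boundary term lies outside the nonvanishing range). The only analytic input is the classical Gauss-sum identity; the main delicate bookkeeping is verifying that extending the $e_p$-product to all of $N_p$ is legitimate, which is exactly the role of the containment $c_i^2/(4b_i) \in p^{t_i}\ZZ_p$ for $i \in N_p$.
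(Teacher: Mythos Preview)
Your proposal is correct and follows essentially the same approach as the paper's proof: both factor $I_p(n;\phi)$ via Fubini into a product of the $G_p(\sigma;b_i,c_i)$, slice $\QQ_p$ as $\ZZ_p\sqcup\bigsqcup_{t\geq 1}p^{-t}\ZZ_p^{\times}$, use Lemma~\ref{thm::gaussintegral} on each factor, extend the $e_p$-product over all of $N_p$ via the observation $c_i^2/(4b_i)\in p^{t_i}\ZZ_p$ to replace $n$ by $\mathfrak{n}$, and then evaluate the remaining integral over $\ZZ_p^{\times}$ by the parity of $\ell_p(t)$. The only cosmetic difference is that the paper invokes \cite[Lemma~2.4]{yang1998} for the last step where you write out the classical Gauss-sum computation directly.
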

\begin{proof}
By Lemma \ref{thm::gaussintegral}, we see that
\begin{align*}
I_p(n;\phi)&=\int_{\QQ_p}e_p(-\sigma n)\prod_{i=1}^{r}G_p(\sigma;b_i,c_i)\mathrm{d}\sigma\\
&=1+\sum_{1\leq t\leq \td}p^{t}\int_{\ZZ_p^{\times}}e_p(-p^{-t}\sigma n)\prod_{i=1}^{r}G_p(p^{-t}\sigma,b_i,c_i)\mathrm{d}\sigma\\
&=1+\sum_{1\leq t\leq \td}\delta_p(t)p^{\tau_p(t)}\int_{\ZZ_p^{\times}}\legendre{\sigma}{p}^{\ell_p(t)}e_p\Bigg(-p^{-t}\sigma\bigg(n+\sum_{\substack{i\in N_p\\t_i<t}}\frac{c_i^2}{4b_i}\bigg)\Bigg)\mathrm{d}\sigma.
\end{align*}
By definition, we have
$$
n+\sum_{\substack{i\in N_p\\t_i<t}}\frac{c_i^2}{4b_i}=\mathfrak{n}-\sum_{\substack{i\in N_p\\t_i\geq t}}\frac{c_i^2}{4b_i}\equiv \mathfrak{n}\pmod{p^t\ZZ_p}.
$$
Thus, by \cite[Lemma 2.4]{yang1998}, we can conclude that
\begin{align*}
I_p(n;\phi)&=1+\sum_{\substack{1\leq t\leq \td\\ \ell_p(t)\text{ even}}}\left(\chi_{p^t\ZZ_p}(\mathfrak{n})-\frac{1}{p}\chi_{p^{t-1}\ZZ_p}(\mathfrak{n})\right)\delta_p(t)p^{\tau_p(t)}+\sum_{\substack{1\leq t\leq \td\\ \ell_p(t)\text{ odd}}}\chi_{p^{t-1}\ZZ_p^{\times}}(\mathfrak{n})\delta_p(t)\omega_pp^{\tau_p(t)}\\
&=1+\left(1-\frac{1}{p}\right)\sum_{\substack{1\leq t\leq\min(\td,\mathfrak{t}_n)\\ \ell_p(t)\text{ even}}}\delta_p(t)p^{\tau_p(t)}+\delta_p(\mathfrak{t}_n+1)\omega_pp^{\tau_p(\mathfrak{t}_n+1)},
\end{align*}
where we denote $\chi_S(x)$ the indicator function of a subset $S\subseteq\QQ_p$ here and throughout.
\end{proof}

\subsection{An Explicit Formula for Dyadic Local Densities}\label{sec:dyadic}

The dyadic case $p=2$ is similar but slightly more complicated because the Jordan canonical form is not necessarily diagonal in general. Although a formula for diagonal cases is sufficient for our applications, we shall prove a formula in general for the completeness of the result. By Jordan canonical form theorem \cite[Theorem 5.2.5 and Section 5.3]{kitaoka1999arithmetic}, there exists a basis $\{\eta_1,\ldots,\eta_r\}$ of the localization $L_2$ such that $\nu=s_1\eta_1+\cdots+s_r\eta_r$ with rational numbers $s_1,\ldots,s_r\in\QQ_2$ and the Gram matrix of $L_2$ is a block-diagonal matrix with $1\times1$ blocks with entries $A_1,\ldots,A_{r_1}\in\ZZ_2$ and $2\times2$ blocks
$$
B_1
\begin{pmatrix}
0 & 1\\
1 & 0
\end{pmatrix}
,\ldots,B_{r_2}
\begin{pmatrix}
0 & 1\\
1 & 0
\end{pmatrix}
,C_1
\begin{pmatrix}
2 & 1\\ 
1 & 2
\end{pmatrix}
,\ldots,C_{r_3}
\begin{pmatrix}
2 & 1\\ 
1 & 2
\end{pmatrix},
$$
with $B_j,C_k\in\ZZ_2$ for $1\leq j\leq r_2$ and $1\leq k\leq r_3$ satisfying $r=r_1+2(r_2+r_3)$. Arguing in the same way as in the non-dyadic cases, we have
\begin{equation}
\label{eqn::localdensitystartup2}
\beta_2(n;X)=2^{-\ord_2([L_X\colon L])}I_2(n;\phi)
\end{equation}
and it boils down to the calculation of the following integral
\begin{align*}
I_2(n;\phi)\coloneqq\int_{\QQ_2}\int_{\ZZ_2^{r_1}\times\ZZ_2^{2r_2}\times\ZZ_2^{2r_3}}e_2\big(\sigma\left(\phi(\mathbf{x},\mathbf{y},\mathbf{z})-n\right)\big)\mathrm{d}\mathbf{x}\mathrm{d}\mathbf{y}\mathrm{d}\mathbf{z}\mathrm{d}\sigma,
\end{align*}
where $\mathbf{x}\coloneqq(x_1,\ldots,x_{r_1})$, $\mathbf{y}\coloneqq(y_{1,1},y_{1,2},\ldots,y_{r_2,1},y_{r_2,2})$, $\mathbf{z}\coloneqq(z_{1,1},z_{1,2}\ldots,z_{r_3,1},z_{r_3,2})$, and $\phi(\mathbf{x},\mathbf{y},\mathbf{z})\in\ZZ_2[\mathbf{x},\mathbf{y},\mathbf{z}]$ is a polynomial of the form
\begin{align*}
\phi(\mathbf{x},\mathbf{y},\mathbf{z})=\sum_{i=1}^{r_1}(b_ix_i^2+c_ix_i)&+\sum_{j=1}^{r_2}(b_j'y_{j,1}y_{j,2}+c_j'y_{j,1}+d_j'y_{j,2})\\
&+\sum_{k=1}^{r_3}\left(b_k''z_{k,1}^2+b_k''z_{k,1}z_{k,2}+b_k''z_{k,2}^2+c_k''z_{k,1}+d_k''z_{k,2}\right).
\end{align*}

First, we evaluate the following Gauss integrals
\begin{align*}
G_2(\sigma;b,c)\coloneqq&\int_{\ZZ_2}e_2\left(\sigma(bx^2+cx)\right)\mathrm{d}x,\\
G_2'(\sigma;b,c,d)\coloneqq&\int_{\ZZ_2^2}e_2\left(\sigma(by_1y_2+cy_1+dy_2)\right)\mathrm{d}y_1\mathrm{d}y_2,\\
G_2''(\sigma;b,c,d)\coloneqq&\int_{\ZZ_2^2}e_2\left(\sigma(b(z_1^2+z_2^2+z_1z_2)+cz_1+dz_2)\right)\mathrm{d}z_1\mathrm{d}z_2,
\end{align*}
for $\sigma\in\QQ_2$ and $b,c,d\in\ZZ_2$.

\begin{lemma}
\label{thm::gaussintegral21}
Set $\mathfrak{t}\coloneqq\min(\ord_2(b),\ord_2(c))$. Suppose that $\sigma=u2^{t}\in\ZZ_2$ such that $u\in\ZZ_2^{\times}$ and $t\in\ZZ$. Then we have
$$
G_2(\sigma;b,c)=
\begin{dcases}
1, &\text{if }t+\mathfrak{t}\geq0,\\
0, &\text{if }t+\mathfrak{t}<0,\ord_2(b)>\ord_2(c),\\
1, &\text{if }t+\mathfrak{t}=-1,\ord_2(b)=\ord_2(c),\\
0, &\text{if }t+\mathfrak{t}<-1,\ord_2(b)=\ord_2(c),\\
0, &\text{if }t+\mathfrak{t}=-1,\ord_2(b)<\ord_2(c),\\
e_2\left(\frac{uu_b}{8}-\frac{\sigma c^2}{4b}\right)\legendre{2}{uu_b}^{1+t+\mathfrak{t}}2^{\frac{1+t+\mathfrak{t}}{2}},&\text{if }t+\mathfrak{t}<-1,\ord_2(b)<\ord_2(c),
\end{dcases}
$$
where $u_b\coloneqq 2^{-\ord_2(b)}b\in\ZZ_2^{\times}$ and we extend the Legendre symbol to $\QQ_2$ via the Hilbert symbol by 
$$
\legendre{2}{x}\coloneqq
\begin{dcases}
(2,x)_2,&\text{if }x\in\ZZ_2^{\times},\\
0,&\text{if }x\in\QQ_2\setminus\ZZ_2^{\times}.
\end{dcases}
$$
\end{lemma}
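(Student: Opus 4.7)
The proof is a case analysis that parallels Lemma 4.1 but must cope with the extra dyadic subtleties. I would first dispatch the trivial case $t+\mathfrak{t}\geq 0$: here $\sigma(bx^2+cx)\in\ZZ_2$ for every $x\in\ZZ_2$, so $e_2\equiv 1$ on the integrand and $G_2(\sigma;b,c)=1$.

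For all four cases with $t+\mathfrak{t}<0$, I would apply the measure-preserving substitution $x\mapsto x+2^{s}$ on $\ZZ_2$, with $s\coloneqq-(t+\mathfrak{t}+1)\geq 0$. Expanding the quadratic yields
\[
G_2(\sigma;b,c)=e_2(2^{2s}\sigma b+2^{s}\sigma c)\int_{\ZZ_2}e_2\bigl(\sigma(bx^2+cx)+2^{s+1}\sigma b\cdot x\bigr)\,dx,
\]
and a short computation with $2$-adic valuations shows that $2^{s+1}\sigma b\in\ZZ_2$, so the linear factor in $x$ disappears and $G_2=e_2(2^{2s}\sigma b+2^{s}\sigma c)\cdot G_2$. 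In each of the vanishing cases (2), (4), and (5), the constant $2^{2s}\sigma b+2^{s}\sigma c$ has $2$-adic order $-1$ (the respective assumptions $\ord_2(b)>\ord_2(c)$ in (2), $\ord_2(b)=\ord_2(c)$ together with $t+\mathfrak{t}<-1$ in (4), and $\ord_2(b)<\ord_2(c)$ together with $t+\mathfrak{t}=-1$ in (5) all force this), so that $e_2$ of it equals $-1$ and $G_2=0$. In the exceptional case (3), however, $\sigma b$ and $\sigma c$ both have $2$-adic order $-1$ and their sum lies in $\ZZ_2$, so the translation is inconclusive. Instead I would argue directly: writing $\sigma b=b_0/2$, $\sigma c=c_0/2$ with $b_0,c_0$ odd, the integrand becomes $e_2((b_0x^2+c_0x)/2)$, and the identity $b_0x^2+c_0x\equiv x(x+1)\equiv 0\pmod{2}$ for all $x\in\ZZ_2$ gives $G_2=1$.

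For the Gauss-integral case (6), since $\ord_2(c/(2b))=\ord_2(c)-\ord_2(b)-1\geq 0$, the substitution $x\mapsto x-c/(2b)$ preserves $\ZZ_2$ and completes the square, yielding
\[
G_2(\sigma;b,c)=e_2\!\left(-\frac{\sigma c^2}{4b}\right)\int_{\ZZ_2}e_2(\sigma b\,x^2)\,dx.
\]
The remaining pure quadratic Gauss integral can be rewritten as the finite sum $2^{-k}\sum_{x=0}^{2^k-1}e^{-2\pi i(uu_b)x^2/2^k}$ with $k=-(t+\mathfrak{t})\geq 2$, and then evaluated via the classical dyadic quadratic Gauss-sum formula, which produces the eighth-root phase $e_2(uu_b/8)$, the Hilbert-symbol factor $(2,uu_b)_2^{1+t+\mathfrak{t}}$, and the power $2^{(1+t+\mathfrak{t})/2}$ stated in the lemma. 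The main obstacle is precisely this last evaluation: the answer depends on the parity of $t+\mathfrak{t}$ and on the residue of $uu_b\pmod{8}$, so one has to check all four residue classes of $uu_b$ modulo $8$ (or invoke reciprocity for $2$-adic quadratic Gauss sums) in order to match the unified closed-form expression.
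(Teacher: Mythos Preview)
Your argument is correct. For case (1), case (2), and case (6) it coincides with the paper's route (the paper refers back to the proof of Lemma~\ref{thm::gaussintegral} and invokes Yang's dyadic Gauss-integral evaluation in place of your classical Gauss-sum computation). The genuine difference is in the two $\ord_2(b)=\ord_2(c)$ cases. The paper handles (3) and (4) simultaneously by writing $x=2^k y+r$ with $k=\lfloor(1-t-\mathfrak{t})/2\rfloor$, integrating out $y$ to pick up a factor $\chi_{\ZZ_2}(2^k\sigma c)$, and observing that this indicator is nonzero exactly when $t+\mathfrak{t}=-1$ (in which case $k=0$ and the remaining sum collapses to $1$). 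You instead extend the translation trick $x\mapsto x+2^{s}$ to case (4), checking that the quadratic contribution $2^{2s}\sigma b$ lands in $\ZZ_2$ precisely because $t+\mathfrak{t}\le -2$, and treat case (3) by the bare-hands observation $b_0x^2+c_0x\equiv x(x+1)\equiv 0\pmod 2$. Your treatment is a little more uniform across the three vanishing cases and avoids the finite coset sum; the paper's coset decomposition handles (3) and (4) in one stroke but at the cost of introducing $k$ and the indicator function. Both arguments are short, and neither has a real advantage over the other.
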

\begin{proof}
Applying \cite[Lemma 4.3(1)]{yang1998}, the proof is essentially the same as the proof of Lemma \ref{thm::gaussintegral}, except that the cases when $\ord_2(b)=\ord_2(c)$ require a different treatment. Assume that $\ord_2(b)=\ord_2(c)$. It is clear that $G_2(\sigma;b,c)=1$ if $t+\mathfrak{t}\geq0$. If $t+\mathfrak{t}<0$, we put
$$
k\coloneqq\left\lfloor\frac{1-t-\mathfrak{t}}{2}\right\rfloor.
$$
Then, we have
\begin{align*}
G_2(\sigma;b,c)&=\frac{1}{2^k}\sum_{r\in\ZZ/2^k\ZZ}e_2(\sigma(br^2+cr))\int_{\ZZ_2}e_2(2^k\sigma x(c+2br))\mathrm{d}x\\
&=\frac{\chi_{\ZZ_2}(\sigma2^kc)}{2^k}\sum_{r\in\ZZ/2^k\ZZ}e_2(\sigma(br^2+cr)).
\end{align*}
Noticing that $\chi_{\ZZ_2}(\sigma2^kc)\neq0$ if and only if $t+\mathfrak{t}=-1$, we obtain the desired results.
\end{proof}

\begin{lemma}
\label{thm::gaussintegral22}
Set $\mathfrak{t}\coloneqq\min(\ord_2(b),\ord_2(c),\ord_2(d))$. Suppose that $\sigma=u2^{t}\in\ZZ_2$ such that $u\in\ZZ_2^{\times}$ and $t\in\ZZ$. Then we have
$$
G_2'(\sigma;b,c,d)=
\begin{dcases}
1, &\text{if }t+\mathfrak{t}\geq0,\\
0, &\text{if }t+\mathfrak{t}<0,\ord_2(b)>\min(\ord_2(c),\ord_2(d)),\\
e_2\left(-\frac{\sigma cd}{b}\right)2^{t+\mathfrak{t}},&\text{if }t+\mathfrak{t}<0,\ord_2(b)\leq\min(\ord_2(c),\ord_2(d)).
\end{dcases}
$$
\end{lemma}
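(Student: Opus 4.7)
The plan is to mirror the three-case structure of Lemma~\ref{thm::gaussintegral}, replacing the completion of the square (which doesn't apply to the hyperbolic form $by_1y_2$) by a diagonalizing substitution $y_i\mapsto y_i + \text{(constant)}$.

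\textbf{Case $t+\mathfrak{t}\geq 0$ (trivial case).} Here $\sigma(by_1y_2+cy_1+dy_2)\in 2^{t+\mathfrak{t}}\ZZ_2\subseteq\ZZ_2$ for every $(y_1,y_2)\in\ZZ_2^2$, so the integrand equals $1$ identically and $G_2'(\sigma;b,c,d)=1$.

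\textbf{Case $t+\mathfrak{t}<0$ with $\ord_2(b)>\min(\ord_2(c),\ord_2(d))$ (vanishing case).} By the symmetry $(y_1,c)\leftrightarrow(y_2,d)$ of the integrand, we may assume $\mathfrak{t}=\ord_2(c)<\ord_2(b)$. Integrating over $y_1$ first via Fubini gives
$$
\int_{\ZZ_2}e_2\bigl(\sigma y_1(by_2+c)\bigr)\mathrm{d}y_1 = \chi_{\ZZ_2}\bigl(\sigma(by_2+c)\bigr).
$$
Since $\ord_2(by_2)\geq\ord_2(b)>\ord_2(c)$, we have $\ord_2(by_2+c)=\mathfrak{t}$ for \emph{every} $y_2\in\ZZ_2$, whence $\ord_2(\sigma(by_2+c))=t+\mathfrak{t}<0$ and the indicator vanishes identically.

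\textbf{Case $t+\mathfrak{t}<0$ with $\ord_2(b)\leq\min(\ord_2(c),\ord_2(d))$ (hyperbolic change of variables).} Now $\mathfrak{t}=\ord_2(b)$ and both $c/b,d/b\in\ZZ_2$. The key algebraic identity is
$$
by_1y_2+cy_1+dy_2 = b\Bigl(y_1+\tfrac{d}{b}\Bigr)\Bigl(y_2+\tfrac{c}{b}\Bigr)-\tfrac{cd}{b},
$$
so the translation $y_1\mapsto y_1-d/b$, $y_2\mapsto y_2-c/b$ preserves $\ZZ_2^2$ and yields
$$
G_2'(\sigma;b,c,d)=e_2\Bigl(-\tfrac{\sigma cd}{b}\Bigr)\int_{\ZZ_2^2}e_2(\sigma b y_1y_2)\,\mathrm{d}y_1\mathrm{d}y_2.
$$
Integrating the inner integral over $y_1$ gives $\chi_{\ZZ_2}(\sigma b y_2)=\chi_{2^{-(t+\mathfrak{t})}\ZZ_2}(y_2)$, and then integrating over $y_2\in\ZZ_2$ produces the measure of $2^{-(t+\mathfrak{t})}\ZZ_2\cap\ZZ_2=2^{-(t+\mathfrak{t})}\ZZ_2$, which equals $2^{t+\mathfrak{t}}$. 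Combining these yields the claimed formula.

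The calculation is entirely straightforward once the hyperbolic substitution is identified; the only conceptual point worth flagging is that unlike in Lemma~\ref{thm::gaussintegral} there is no subtle sub-case distinction ($\ord_2(b)=\min(\ord_2(c),\ord_2(d))$ is handled uniformly), and no Gauss-sum constants appear because the hyperbolic plane has trivial Weil index at $2$. The main potential pitfall is justifying the Fubini step in the vanishing case and the measure-preserving nature of the translation in the third case, both of which are immediate once one checks the relevant quantities lie in $\ZZ_2$.
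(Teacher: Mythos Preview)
Your proof is correct and follows essentially the same approach as the paper: both argue the first case is trivial, handle the second case by integrating one variable to obtain an indicator $\chi_{\ZZ_2}(\sigma(by_i+\text{const}))$ that vanishes identically, and treat the third case by a measure-preserving translation that reduces to the pure hyperbolic form. The only cosmetic difference is that in the third case the paper reuses the one-variable integral from the second case and applies a single substitution $y_1\mapsto y_1-d/b$, whereas you apply the symmetric two-variable translation and then integrate; the computations are equivalent.
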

\begin{proof}
The first case is obvious. For the second case, by the symmetry, we may assume that $\ord_2(c)\geq\ord_2(d)$. Therefore, we have
$$
G_2'(\sigma;b,c,d)=\int_{\ZZ_2}e_2(\sigma cy_1)\int_{\ZZ_2}e_2(\sigma(by_1+d)y_2)\mathrm{d}y_1\mathrm{d}y_2=\int_{\ZZ_2}e_2(\sigma cy_1)\chi_{\ZZ_2}(\sigma(by_1+d))\mathrm{d}y_1.
$$
The indicator function vanishes identically because $\ord_2(b)>\ord_2(d)$ and $t+\mathfrak{t}<0$. Therefore we obtain the desired result. For the third case, we apply the change of the variable $y_1\to y_1-\frac{d}{b}$ to the the second line of the above equation,
$$
G_2'(\sigma;b,c,d)=e_2\left(-\frac{\sigma cd}{b}\right)2^{t+\mathfrak{t}}\int_{\ZZ_2}e_2(\sigma2^{-t-\mathfrak{t}}cy_1)\mathrm{d}y_1=e_2\left(-\frac{\sigma cd}{b}\right)2^{t+\mathfrak{t}},
$$
as desired.
\end{proof}

\begin{lemma}
\label{thm::gaussintegral23}
Set $\mathfrak{t}\coloneqq\min(\ord_2(b),\ord_2(c),\ord_2(d))$. Suppose that $\sigma=u2^{t}\in\ZZ_2$ such that $u\in\ZZ_2^{\times}$ and $t\in\ZZ$. Then we have 
$$
G_2''(\sigma;b,c,d)=
\begin{dcases}
1, &\text{if }t+\mathfrak{t}\geq0,\\
0, &\text{if }t+\mathfrak{t}<0,\ord_2(b)>\min(\ord_2(c),\ord_2(d)),\\
(-1)^{t+\mathfrak{t}}e_2\left(\frac{\sigma(c^2+d^2-cd)}{3b}\right)2^{t+\mathfrak{t}},&\text{if }t+\mathfrak{t}<0, \ord_2(b)\leq\min(\ord_2(c),\ord_2(d)).
\end{dcases}
$$
\end{lemma}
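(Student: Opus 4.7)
The proof follows the same template as Lemmas \ref{thm::gaussintegral21} and \ref{thm::gaussintegral22}, treating each case separately. The first case is immediate: when $t+\mathfrak{t}\geq 0$, we have $\sigma\phi(z_1,z_2)\in\ZZ_2$ for every $(z_1,z_2)\in\ZZ_2^2$, so the integrand is identically $1$ and the integral equals $1$.

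For the second case, where $t+\mathfrak{t}<0$ and $\ord_2(b)>\min(\ord_2(c),\ord_2(d))$, I would regard the integrand as a quadratic in $z_1$, namely
$$
b(z_1^2+z_2^2+z_1z_2)+cz_1+dz_2 = b z_1^2 + (bz_2+c)z_1 + (bz_2^2+dz_2),
$$
and integrate in $z_1$ first, obtaining an inner integral of the form $G_2(\sigma;b,bz_2+c)$. The symmetry exchanging $(z_1,z_2)$ and $(c,d)$ allows us to assume $\ord_2(c)\leq\ord_2(d)$, in which case the hypothesis forces $\ord_2(b)>\ord_2(c)=\mathfrak{t}$, and hence $\ord_2(bz_2+c)=\ord_2(c)<\ord_2(b)$ for every $z_2\in\ZZ_2$. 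Case 2 of Lemma \ref{thm::gaussintegral21} then gives $G_2(\sigma;b,bz_2+c)=0$ uniformly in $z_2$, so the whole double integral vanishes.

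For the third case, where $t+\mathfrak{t}<0$ and $\ord_2(b)\leq\min(\ord_2(c),\ord_2(d))$, I would complete the square. The unique critical point of $\phi$ is
$$
z_0 = \left(\frac{d-2c}{3b},\frac{c-2d}{3b}\right),
$$
and both coordinates lie in $\ZZ_2$ because $3\in\ZZ_2^{\times}$ and the hypothesis $\ord_2(b)\leq\min(\ord_2(c),\ord_2(d))$ forces $(d-2c)/b,(c-2d)/b\in\ZZ_2$. The translation $z\mapsto z+z_0$ is therefore a measure-preserving bijection of $\ZZ_2^2$, and a direct expansion (together with the critical-point identity $c\alpha+d\beta=-2b(\alpha^2+\alpha\beta+\beta^2)$) gives
$$
\phi(z_1+\alpha,z_2+\beta) = b(z_1^2+z_1z_2+z_2^2) - \frac{c^2+d^2-cd}{3b}.
$$
The integral thus factors as a unimodular constant (whose phase is the claimed exponential factor) times the pure two-variable Gauss integral
$$
J(\sigma b)\coloneqq\int_{\ZZ_2^2}e_2\bigl(\sigma b(z_1^2+z_1z_2+z_2^2)\bigr)\,dz_1\,dz_2.
$$

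The main obstacle is the evaluation of $J(\sigma b)$, since the form $R(z_1,z_2)=z_1^2+z_1z_2+z_2^2$ is the norm form of the unramified quadratic extension $\ZZ_2[\omega]$ (with $\omega$ a primitive cube root of unity); it is anisotropic and cannot be diagonalized over $\ZZ_2$, which is precisely why a separate dyadic Gauss integral is needed in Jordan form. I would evaluate $J(\sigma b)$ either by a direct appeal to Yang's two-variable dyadic Gauss integral in \cite[Lemma 4.3]{yang1998}, or by partitioning $\ZZ_2^2$ into the four cosets modulo $(2\ZZ_2)^2$: the substitution $z=2w$ on the $(2\ZZ_2)^2$ piece yields the recursion $\tfrac{1}{4}J(4\sigma b)$, while on the three off-origin cosets one writes $R(z)$ explicitly in terms of the translated variable $w$ (for instance $R(1+2w_1,2w_2)=1+4w_1+2w_2+4R(w)$) and evaluates the resulting linear-plus-quadratic exponential integrals by further applications of the one-variable lemmas. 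Iterating the recursion until $4^k\sigma b$ lies in $\ZZ_2$ and telescoping the off-origin contributions produces the value $(-1)^{t+\mathfrak{t}}2^{t+\mathfrak{t}}$, where the sign reflects the fact that $R\equiv z_1+z_2+z_1z_2\pmod{2}$ is nonzero on exactly three of the four residue classes of $(\ZZ/2\ZZ)^2$.
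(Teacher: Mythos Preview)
Your argument is correct and follows essentially the same route as the paper: in case~2 you integrate one variable first and invoke Lemma~\ref{thm::gaussintegral21}, and in case~3 you translate to the critical point $\bigl(\tfrac{d-2c}{3b},\tfrac{c-2d}{3b}\bigr)$ and reduce to the pure integral $\int_{\ZZ_2^2}e_2(\sigma b(z_1^2+z_1z_2+z_2^2))\,dz_1dz_2$, which the paper dispatches by a direct citation of \cite[Lemma~4.4]{yang1998} (not Lemma~4.3 as you wrote). Your optional coset-recursion sketch for $J(\sigma b)$ is a valid alternative derivation of that cited result, but the paper does not carry it out; also note that your own displayed identity gives the constant term $-\tfrac{c^2+d^2-cd}{3b}$, so the exponential factor you produce is $e_2\bigl(-\sigma(c^2+d^2-cd)/(3b)\bigr)$, which you should reconcile explicitly with the sign appearing in the statement rather than asserting they match.
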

\begin{proof}
The first case is obvious. For the second case, we may assume that $\ord_2(c)\geq\ord_2(d)$. Therefore by Lemma \ref{thm::gaussintegral21}, we have
\begin{align*}
G_2''(\sigma;b,c,d)&=\int_{\ZZ_2}e_2(\sigma(bz_1^2+cz_1))\int_{\ZZ_2}e_2(\sigma(bz_2^2+(bz_1+d)z_2))\mathrm{d}z_2\mathrm{d}z_1\\
&=\int_{\ZZ_2}e_2(\sigma(bz_1^2+cz_1))G_2(\sigma;b,bz_1+d)\mathrm{d}z_1=\chi_{\ZZ_2}(\sigma d)\int_{\ZZ_2}e_2(\sigma(bz_1^2+cz_1))\mathrm{d}z_1.
\end{align*}
The indicator function vanishes identically because $t+\mathfrak{t}<0$. So we obtain the desired results. For the third case, we apply the changes of variables $z_1\to z_1+\frac{d-2c}{3b}$ and $z_2\to z_2+\frac{c-2d}{3b}$. Then we have
\begin{align*}
G_2''(\sigma;b,c,d)&=e_2\left(\frac{\sigma(c^2+d^2-cd)}{3b}\right)\int_{\ZZ_2^2}e_2(\sigma b(z_1^2+z_2^2+z_1z_2))\mathrm{d}z_1\mathrm{d}z_2\\
&=(-1)^{t+\mathfrak{t}}e_2\left(\frac{\sigma(c^2+d^2-cd)}{3b}\right)2^{t+\mathfrak{t}},
\end{align*}
where the last equality follows from \cite[Lemma 4.4]{yang1998}.
\end{proof}

\begin{theorem}
\label{thm::formulalocaldensity2}
Suppose that $n\in\ZZ_2$ and $\phi(\mathbf{x},\mathbf{y},\mathbf{z})\in\ZZ_2[\mathbf{x},\mathbf{y},\mathbf{z}]$ is a polynomial of the form
\begin{align*}
\phi(\mathbf{x},\mathbf{y},\mathbf{z})=\sum_{i=1}^{r_1}(b_ix_i^2+c_ix_i)&+\sum_{j=1}^{r_2}(b_j'y_{j,1}y_{j,2}+c_j'y_{j,1}+d_j'y_{j,2})\\
&+\sum_{k=1}^{r_3}\left(b_k''z_{k,1}^2+b_k''z_{k,1}z_{k,2}+b_k''z_{k,2}^2+c_k''z_{k,1}+d_k''z_{k,2}\right).
\end{align*}
For $1\leq i\leq r_1$, we define $t_i\coloneqq\min(\ord_2(b_i),\ord_2(c_i))$. For $1\leq j\leq r_2$, we define $t_j'\coloneqq\min(\ord_2(b_j'),\ord_2(c_j'),\ord_2(d_j'))$. For $1\leq k\leq r_3$, we define $t_k''\coloneqq\min(\ord_2(b_k''),\ord_2(c_k''),\ord_2(d_k''))$. We set
\begin{align*}
D_2\coloneqq&\{1\leq i\leq r_1\mid\ord_2(b_i)>\ord_2(c_i)\},\\
E_2\coloneqq&\{1\leq i\leq r_1\mid\ord_2(b_i)=\ord_2(c_i)\},\\
N_2\coloneqq&\{1\leq i\leq r_1\mid\ord_2(b_i)<\ord_2(c_i)\},\\
D_2'\coloneqq&\{1\leq j\leq r_2\mid\ord_2(b_j')>\min(\ord_2(c_j'),\ord_2(d_j'))\},\\
N_2'\coloneqq&\{1\leq j\leq r_2\mid\ord_2(b_j')\leq\min(\ord_2(c_j'),\ord_2(d_j'))\},\\
D_2''\coloneqq&\{1\leq k\leq r_3\mid\ord_2(b_k'')>\min(\ord_2(c_k''),\ord_2(d_k''))\},\\
N_2''\coloneqq&\{1\leq k\leq r_3\mid\ord_2(b_k'')\leq\min(\ord_2(c_k''),\ord_2(d_k''))\},
\end{align*}
and set
$$
\td\coloneqq\min\Big\{\{t_i\mid i\in D_2\}\cup\{t_i+1\mid i\in E_2\}\cup\{t_j'\mid j\in D_2'\}\cup\{t_k''\mid k\in D_2''\}\Big\}.
$$
We further set
$$
\mathfrak{n}\coloneqq n+\sum_{i\in N_2}\frac{c_i^2}{4b_i}+\sum_{j\in N_2'}\frac{c_j'd_j'}{b_j'}+\sum_{k\in N_2''}\frac{c_k''^2+d_k''^2-c_k''d_k''}{3b_k''}.
$$
If $\mathfrak{n}\neq0$, assume that $\mathfrak{n}=\mathfrak{u}_n2^{\mathfrak{t}_n}$ such that $\mathfrak{u}_n\in\ZZ_2^{\times}$ and $\mathfrak{t}_n\in\ZZ$. Otherwise we set $\mathfrak{t}_n\coloneqq\infty$. For an integer $t\in\ZZ$, we define 
$$
\mathcal{L}_2(t)\coloneqq\{i\in N_2\mid t_i-t<0\text{ and odd}\},~\ell_2(t)\coloneqq|\mathcal{L}_2(t)|,
$$
and 
$$
\mathcal{L}_2''(t)\coloneqq\{k\in N_2''\mid t_k''-t<0\text{ and odd}\},~\ell_2''(t)\coloneqq|\mathcal{L}_2''(t)|.
$$
Then, we have
$$
I_2(n;\phi)=1+\left(1-\frac{1}{2}\right)\sum_{\substack{1\leq t\leq\min(\td,\mathfrak{t}_n+3)\\t\neq t_i+1\text{ for }i\in N_2}}\delta_2(t)2^{\tau_2(t)},
$$
where for any integer $t\in\ZZ$, we define
$$
\xi_2(t)\coloneqq\sum_{\substack{i\in N_2\\t_i+1<t}}u_i-2^{3-t}\mathfrak{n},
$$
with $u_i\coloneqq2^{-\ord_2(b_i)}b_i$, define
$$
\delta_2(t)\coloneqq
\begin{dcases}
(-1)^{\ell_2''(t)}\chi_{4\ZZ_2}(\xi_2(t))e_2\left(\frac{\xi_2(t)}{8}\right)\legendre{2}{\prod_{i\in\mathcal{L}_2(t-1)}u_i},&\text{if }\ell_2(t-1)\text{ is even},\\
(-1)^{\ell_2''(t)}\legendre{2}{\xi_2(t)\prod_{i\in\mathcal{L}_2(t-1)}u_i}\frac{1}{\sqrt{2}}, &\text{if }\ell_2(t-1)\text{ is odd},
\end{dcases}
$$
and define
$$
\tau_2(t)\coloneqq t+\frac{1}{2}\sum_{\substack{i\in N_2\\t_i+1<t}}(1+t_i-t)+\sum_{\substack{j\in N_2'\\t_j'<t}}(t_j'-t)+\sum_{\substack{k\in N_2''\\t_k''<t}}(t_k''-t).
$$
\end{theorem}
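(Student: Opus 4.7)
The plan is to mirror the strategy used for Theorem~\ref{thm::formulalocaldensityp}, now using the three dyadic Gauss integrals $G_2$, $G_2'$, $G_2''$ from Lemmas~\ref{thm::gaussintegral21}--\ref{thm::gaussintegral23} in place of the single $G_p$. By Fubini's theorem, $I_2(n;\phi)$ factors as
$$
I_2(n;\phi)=\int_{\QQ_2}e_2(-\sigma n)\prod_{i=1}^{r_1}G_2(\sigma;b_i,c_i)\prod_{j=1}^{r_2}G_2'(\sigma;b_j',c_j',d_j')\prod_{k=1}^{r_3}G_2''(\sigma;b_k'',c_k'',d_k'')\,\mathrm{d}\sigma.
$$
First I would decompose $\QQ_2=\ZZ_2\sqcup\bigsqcup_{t\geq 1}2^{-t}\ZZ_2^{\times}$; the $\ZZ_2$-piece contributes $1$ since each Gauss integral equals $1$ at $\sigma\in\ZZ_2$, producing the leading term of the claimed formula. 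On each annulus $2^{-t}\ZZ_2^{\times}$, substituting $\sigma\mapsto 2^{-t}\sigma$ produces an outer factor of $2^{t}$ and reduces matters to integrating over $\ZZ_2^{\times}$ an explicit product of dyadic Gauss values.

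Next I would substitute the piecewise formulae from Lemmas~\ref{thm::gaussintegral21}--\ref{thm::gaussintegral23}. For the integrand to be nonzero at a given $t\geq 1$, one needs $t\leq t_i$ for $i\in D_2$, $t\leq t_i+1$ for $i\in E_2$, $t\leq t_j'$ for $j\in D_2'$, and $t\leq t_k''$ for $k\in D_2''$; the joint constraint collapses to $t\leq\td$. Moreover, the fifth case of Lemma~\ref{thm::gaussintegral21} shows that for $i\in N_2$ the factor $G_2(2^{-t}\sigma;b_i,c_i)$ vanishes exactly when $t=t_i+1$, which accounts for the exclusion in the summation range. Collecting moduli, the $N_2$-, $N_2'$-, and $N_2''$-blocks contribute $2^{(1+t_i-t)/2}$, $2^{t_j'-t}$, and $2^{t_k''-t}$ respectively whenever $t_i+1<t$, $t_j'<t$, or $t_k''<t$, and combining these with the outer $2^{t}$ produces the exponent $\tau_2(t)$.

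The remaining integrand over $\ZZ_2^{\times}$ reduces to $(-1)^{\ell_2''(t)}\chi(\sigma)e_2(-2^{-t}\sigma\mathfrak{n})$ multiplied by fixed Hilbert-symbol phases, where $\chi$ is the quadratic character assembled from the $N_2$-blocks via the extended Legendre symbol $\legendre{2}{\cdot}$, and $\mathfrak{n}$ emerges after completing the square in each $N_2$-, $N_2'$-, and $N_2''$-block. Applying the dyadic analogue of \cite[Lemma 2.4]{yang1998} then produces the cutoff $t\leq\mathfrak{t}_n+3$, reflecting that nontrivial characters of $\ZZ_2^{\times}$ first appear at conductor $8$ rather than $p$, and simultaneously assembles the Hilbert-symbol phases into the closed form $\delta_2(t)$. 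The main technical obstacle will be the bookkeeping: the three families of phase factors $e_2(-\sigma c_i^2/(4b_i))$, $e_2(-\sigma c_j'd_j'/b_j')$, and $e_2(\sigma(c_k''^2+d_k''^2-c_k''d_k'')/(3b_k''))$ must be consolidated into the single $e_2(\xi_2(t)/8)$ factor, and the shifted $\chi_{4\ZZ_2}$ cutoff appearing in $\delta_2(t)$ must be matched against the dyadic orthogonality relation, both requiring careful tracking of the $8$-adic structure of $\ZZ_2^{\times}$ and repeated use of Hilbert-symbol identities that replace the simpler Legendre-symbol manipulations of the non-dyadic case.
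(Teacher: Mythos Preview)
Your proposal is correct and follows essentially the same approach as the paper: factor $I_2(n;\phi)$ via Fubini, decompose $\QQ_2$ into $\ZZ_2$ and the annuli $2^{-t}\ZZ_2^{\times}$, insert Lemmas~\ref{thm::gaussintegral21}--\ref{thm::gaussintegral23} to obtain the range $1\leq t\leq\td$ with the exclusion $t\neq t_i+1$, collect the moduli into $2^{\tau_2(t)}$, and evaluate the remaining $\ZZ_2^{\times}$-integral using \cite[Lemma~4.3(2)]{yang1998} (which is precisely the dyadic analogue of \cite[Lemma~2.4]{yang1998} you invoke). One small clarification: the phases $e_2(uu_{b_i}/8)$ from the last case of Lemma~\ref{thm::gaussintegral21} are not ``fixed'' but depend on $\sigma$ through its unit part $u$, and it is exactly their combination with $e_2(-2^{-t}\sigma\mathfrak{n})$ that produces the single factor $e_2(\sigma\xi_2(t)/8)$ appearing inside the $\ZZ_2^{\times}$-integral; the paper then reads off both the cutoff $t\leq\mathfrak{t}_n+3$ and the two cases of $\delta_2(t)$ directly from that lemma.
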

\begin{proof}
By Lemma \ref{thm::gaussintegral21}, Lemma \ref{thm::gaussintegral22}, and Lemma \ref{thm::gaussintegral23}, it is easy to see that
$$
I_2(n;\phi)=1+\sum_{\substack{1\leq t\leq\min(\td,\mathfrak{t}_n+3)\\t\neq t_i+1\text{ for }i\in N_2}}(-1)^{\ell_2''(t)}\legendre{2}{\prod_{i\in\mathcal{L}_2(t-1)}u_i}2^{\tau_2(t)}\int_{\ZZ_2^{\times}}\legendre{2}{\sigma}^{\ell_2(t-1)}e_2\left(\frac{\sigma\xi_2}{8}\right)\mathrm{d}\sigma.
$$
If $t>\mathfrak{t}_n+3$, then $\xi_2(t)\not\in\ZZ_2$. Then the integral vanishes by \cite[Lemma 4.3(2)]{yang1998}. Suppose that $t\leq \mathfrak{t}_n+3$ and $\ell_2(t-1)$ is even, we have
$$
\int_{\ZZ_2^{\times}}e_2\left(\frac{\sigma\xi_2(t)}{8}\right)\mathrm{d}\sigma=\frac{1}{2}\chi_{4\ZZ_2}(\xi_2(t))e_2\left(\frac{\xi_2(t)}{8}\right).
$$ 
Suppose that $t\leq \mathfrak{t}_n+3$ and $\ell_2(t-1)$ is odd. Again by \cite[Lemma 4.3(2)]{yang1998}, we have
$$
\int_{\ZZ_2^{\times}}\legendre{2}{\sigma}e_2\left(\frac{\sigma\xi_2(t)}{8}\right)\mathrm{d}\sigma=\frac{1}{2\sqrt{2}}\legendre{2}{\xi_2(t)},
$$
as desired.
\end{proof}

\subsection{Lower Bounds on the Eisenstein Part}
\label{sec::localdensitybound}

Let $F=\sum_{i=1}^{r}a_iP_{m_i}$ be a sum of generalized polygonal numbers. We are going to construct a shifted lattice $X$ together with integers $\mu\geq1,\rho\in\ZZ$ such that 
\begin{equation}
\label{eqn::splitting}
r_F(n)=r_X(\mu n+\rho)=a_{E_X}(\mu n+\rho)+a_{G_X}(\mu n+\rho),
\end{equation}
for any integer $n\in\ZZ$. The choice of such a shifted lattice $X$ with the integers $\mu$ and $\rho$ is not necessarily unique. Throughout this paper, we determine them as follows.

\begin{definition}
\label{thm::defineshiftedlattice}
Suppose that $F=\sum_{i=1}^{r}a_iP_{m_i}$ is a sum of generalized polygonal numbers. We set
$$
\Lambda\coloneqq\lcm(m_1-2,\ldots,m_r-2),~\mu\coloneqq8\Lambda,~\rho\coloneqq\sum_{i=1}^{r}\frac{a_i(m_i-4)^2\Lambda}{m_i-2}.
$$
For any $1\leq i\leq r$, we set
$$
\alpha_i\coloneqq\frac{a_i\Lambda}{m_i-2},~\mu_i\coloneqq2(m_i-2),~\rho_i\coloneqq 4-m_i.
$$
Let $V$ be a quadratic space of dimension $r$ with a basis $\{e_1,\ldots,e_r\}$ such that the Gram matrix with respect to the basis is a diagonal matrix with entries $\alpha_1,\ldots,\alpha_r$. \textit{The shifted lattice $X$ corresponding to the sum $F$} is defined to be
$$
X\coloneqq L+\nu\coloneqq\bigoplus_{i=1}^{r}\ZZ\langle\mu_ie_i\rangle+\sum_{i=1}^{r}\rho_ie_i,
$$
and the base lattice $L_X$ is defined to be the lattice generated by the basis $\{e_1,\ldots,e_r\}$. It is straightforward to verify that (\ref{eqn::splitting}) holds for the shifted lattice $X$. 
\end{definition}

In this subsection, we are going to bound the Fourier coefficient $a_{E_X}(\mu n+\rho)$ of the Eisenstein part in (\ref{eqn::splitting}) by Siegel-Minkowski formula and the explicit formulae for local densities. Plugging the data of the shifted lattice and the base lattice constructed in Definition \ref{thm::defineshiftedlattice} into (\ref{eqn::localdensitystartupp}) and (\ref{eqn::localdensitystartup2}), we have
\begin{equation}
\label{eqn::nodelocaldensitystartup}
\beta_p(\mu n+\rho;X)=p^{-\ord_p\left(\prod_{i}\mu_i\right)}I_p(8\Lambda n;4\Lambda\phi)=p^{-\ord_p\left(\prod_{i}\mu_i\right)+\ord_p(4\Lambda)}I_p(2n;\phi),
\end{equation}
where the quadratic polynomial is given by 
\begin{equation}
\label{eqn::localdensityphi}
\phi(\mathbf{x})\coloneqq\sum_{i=1}^{r}(a_i(m_i-2)x_i^2-a_i(m_i-4)x_i).
\end{equation}

It is relatively easy to bound the product of the local densities for $p\nmid2\prod_ia_i(m_i-2)$.

\begin{proposition}
\label{thm::almostallbound}
Suppose that $F=\sum_{i=1}^{r}a_iP_{m_i}$ is a sum of generalized polygonal numbers with $r\geq5$. Let $X,\mu,\rho$ be the shifted lattice and the integers constructed in Definition \ref{thm::defineshiftedlattice} corresponding to $F$. Then we have
$$
\prod_{p\nmid2\prod_{i}a_i(m_i-2)}\beta_p(\mu n+\rho;X)\sim 1,
$$
where $a(n)\sim b(n)$ means $b(n)\ll a(n)\ll b(n)$ for any functions $a,b\colon\NN\to\RR$.
\end{proposition}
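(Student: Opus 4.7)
The plan is to apply the explicit non-dyadic formula from Theorem \ref{thm::formulalocaldensityp} and prove that, for each prime $p$ outside the excluded set, $\beta_p(\mu n + \rho; X) = 1 + O(p^{-3/2})$ with an \emph{absolute} implied constant (depending only on the hypothesis $r \geq 5$). Once this is established, the Euler product converges absolutely, and both the upper and lower bounds for $\prod_p \beta_p$ will follow by comparing term-by-term with convergent products of the form $\prod_p(1 \pm Cp^{-3/2})$.

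First I will simplify (\ref{eqn::nodelocaldensitystartup}). For $p \nmid 2 \prod_i a_i(m_i - 2)$, each $\mu_i = 2(m_i-2)$ is a $p$-adic unit, and the same holds for $4\Lambda$ because $p \nmid m_i - 2$ for every $i$ forces $p \nmid \Lambda = \lcm(m_1 - 2, \ldots, m_r - 2)$. Hence the prefactor $p^{-\ord_p(\prod_i \mu_i) + \ord_p(4\Lambda)}$ equals $1$ and $\beta_p(\mu n + \rho; X) = I_p(2n; \phi)$ for $\phi$ as in (\ref{eqn::localdensityphi}).

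Next I will apply Theorem \ref{thm::formulalocaldensityp} with $b_i = a_i(m_i - 2) \in \ZZ_p^\times$ and $c_i = -a_i(m_i - 4) \in \ZZ_p$. Since every $b_i$ is a unit, $D_p = \emptyset$, $N_p = \{1, \ldots, r\}$, $t_i = 0$ for every $i$, and $\td = \infty$; consequently $\tau_p(t) = t(2-r)/2 \leq -3t/2$ for $t \geq 1$. Using $|\delta_p(t)| = 1$, $|\omega_p| \leq p^{-1/2}$, and the fact that $\mathfrak{n} \in \ZZ_p$ forces $\mathfrak{t}_n \geq 0$, a direct estimate of the main sum and the tail term yields
\begin{equation*}
\bigl|I_p(2n; \phi) - 1\bigr| \leq \sum_{t \geq 1} p^{-3t/2} + p^{-1/2} \cdot p^{-3/2} \leq 3p^{-3/2}
\end{equation*}
uniformly in $F$ and $n$, for every prime $p \geq 3$.

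With this uniform estimate in hand, the upper bound $\prod_p \beta_p \ll \prod_p (1 + 3p^{-3/2}) \ll 1$ is immediate. For the lower bound, $3 p^{-3/2} < 1$ already at $p = 3$ (in fact $3 \cdot 3^{-3/2} = 1/\sqrt{3}$), so every factor $1 - 3p^{-3/2}$ is positive and $\prod_p(1 - 3p^{-3/2})$ converges to a positive absolute constant, giving $\prod_p \beta_p \gg 1$. The main obstacle is securing the \emph{absolute} implied constant in the error bound: this is precisely where $r \geq 5$ enters, since for $r \leq 4$ one has $\tau_p(1) \geq -1$, so $\sum_p |\beta_p - 1|$ cannot be controlled by a convergent $p$-series uniformly in $F$, and a separate, more delicate argument would be needed to recover a lower bound on the Euler product.
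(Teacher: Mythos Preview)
Your proof is correct and follows essentially the same strategy as the paper: reduce $\beta_p$ to $I_p(2n;\phi)$, invoke Theorem~\ref{thm::formulalocaldensityp} with all $t_i=0$ and $\td=\infty$, and bound the resulting sum by a convergent $p$-series to control the Euler product. The only real difference is that the paper first eliminates the linear terms and puts $\phi$ into Jordan form $x_1^2+\cdots+x_{r-1}^2+Dx_r^2$ before applying the formula, which (after a slightly more careful case split on the parity of $\ell_p(t)$) yields the sharper bound $|I_p-1|\le p^{-2}$; you work directly with the original $\phi$ and obtain the cruder but still summable $|I_p-1|\le 3p^{-3/2}$, which is enough for the stated $\sim 1$ conclusion.
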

\begin{proof}
Suppose that $p$ is a prime number such that $p\nmid2\prod_ia_i(m_i-2)$. Then we can eliminate the linear terms in the quadratic polynomial $\phi$ by applying a linear change of variables. Moreover by Jordan canonical form theorem \cite[Theorem 5.2.4 and Section 5.3]{kitaoka1999arithmetic}, we may further assume that 
$$
\phi(\mathbf{x})=x_1^2+\cdots+x_{r-1}^2+Dx_r^2
$$
where $D\coloneqq\prod_ia_i(m_i-2)\in\ZZ_p^{\times}$. Apply Theorem \ref{thm::formulalocaldensityp} to $I_p(2n;\phi)$, we have $t_i=0$ and $i\in N_p$ for every $1\leq i\leq r$ and $\tau_p(t)=\left(1-\frac{r}{2}\right)t$ for every integer $t\geq 1$. Hence it follows from a straightforward calculation that
$$
1-p^{-2}\leq I_p(2n;\phi)\leq1+p^{-2},
$$
provided that $r\geq5$. Finally, bounding against special values of the zeta function, we obtain the desired asymptotics.
\end{proof}

It remains to bound the local densities for prime numbers $p$ such that $p\mid2\prod_ia_i(m_i-2)$. 

\ifnew
\begin{proposition}
\label{thm::nondyadicbound}
Fix an odd prime number $p$. Suppose that $F=\sum_{i=1}^{r}a_iP_{m_i}$ is a node of depth $r\geq5$ in the escalator tree $T_{\infty}$. Let $X,\mu,\rho$ be the shifted lattice and the integers constructed in Definition \ref{thm::defineshiftedlattice} corresponding to $F$. For any $p$-adic integer $n\in\ZZ_p$, we have
$$
2(1-p^{-\frac{1}{2}})p^{-\max(\ord_p(a_i))}\leq\frac{\beta_p(\mu n+\rho;X)}{p^{\ord_p(\Lambda)-\ord_p\left(\prod_{i}(m_i-2)\right)}}\leq3,
$$
where $\Lambda\coloneqq\lcm(m_1-2,\ldots,m_r-2)$.
\end{proposition}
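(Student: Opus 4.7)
The plan is to pass from $\beta_p(\mu n+\rho;X)$ to the integral $I_p(2n;\phi)$ via (\ref{eqn::nodelocaldensitystartup}), identify the parameters $N_p$, $D_p$, $t_i$, $\td$ appearing in Theorem \ref{thm::formulalocaldensityp}, and then carry out a geometric-type estimate on the resulting sum, with the lower bound handled by invoking Lemma \ref{thm::nondyadicgrowth} to control signs.

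First I would simplify the prefactor in (\ref{eqn::nodelocaldensitystartup}). Because $p$ is odd, $\ord_p(\prod_i\mu_i)=\ord_p(\prod_i(m_i-2))$ and $\ord_p(4\Lambda)=\ord_p(\Lambda)$, so the normalized quantity in the statement is exactly $I_p(2n;\phi)$ with $\phi$ as in (\ref{eqn::localdensityphi}). It therefore suffices to show
\[
2(1-p^{-1/2})\,p^{-\max_i\ord_p(a_i)}\leq I_p(2n;\phi)\leq 3.
\]
Applying Theorem \ref{thm::formulalocaldensityp} with $b_i=a_i(m_i-2)$ and $c_i=-a_i(m_i-4)$, and using that $p$ is odd (so $p\mid(m_i-2)\Rightarrow p\nmid(m_i-4)$), I would identify $N_p=\{i\colon p\nmid(m_i-2)\}$, $D_p=\{i\colon p\mid(m_i-2)\}$, $t_i=\ord_p(a_i)$ for all $i$, and $\td=\min\{\ord_p(a_i)\colon i\in D_p\}$.

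Next I would analyze the piecewise linear exponent $\tau_p(t)=t-tN_p(t)/2+\tfrac{1}{2}\sum_{i\in N_p,\,t_i<t}t_i$, whose slope steps down from $1$ through $1/2,0,-1/2,\ldots,1-|N_p|/2$ at the sorted values of the $t_i$'s with $i\in N_p$. Letting $s_1\leq s_2\leq\cdots$ denote these sorted values, the maximum of $\tau_p(t)$ is attained on the flat stretch where $N_p(t)=2$ and equals $(s_1+s_2)/2\leq\max_i\ord_p(a_i)$. For the upper bound I would split into two regimes: if $|N_p|\geq 5$ the slope eventually reaches $1-|N_p|/2\leq -3/2$, so the sum $\sum_t p^{\tau_p(t)}$ is dominated by a geometric tail of ratio $\leq p^{-3/2}\leq 3^{-3/2}$; if $|N_p|\leq 4$ then $|D_p|\geq 1$ so $\td$ is finite and the sum is truncated, while the slope is already negative past $s_3$. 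Combining the resulting bound on $(1-1/p)\,\Sigma$ with the contribution $|\omega_p|\,p^{\tau_p(\mathfrak{t}_n+1)}\leq p^{-1/2}\cdot p^{(s_1+s_2)/2}$ of the $\omega_p$-correction, the worst case is that $s_1=s_2=0$ (true as soon as some $a_i=1$ lies in $N_p$), giving $I_p(2n;\phi)\leq 1+(1-1/p)\cdot\tfrac{1}{1-p^{-3/2}}+p^{-1/2}$, which is bounded above by $3$ for every odd $p$.

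The lower bound is the main obstacle, because the signed character sums $\delta_p(t)$ can in principle align destructively with the $\omega_p$-term. To push through the bound $2(1-p^{-1/2})p^{-\max_i\ord_p(a_i)}$, I would use Lemma \ref{thm::nondyadicgrowth}: for any pair of indices $i,j$ with $p\nmid a_ia_j(m_i-2)(m_j-2)$ whose complementary summand satisfies the hypotheses of that lemma, one has $\legendre{-a_ia_j(m_i-2)(m_j-2)}{p}=1$, which forces the leading Jacobi-symbol factor in $\delta_p(t)$ at the critical value of $t$ to be $+1$. Combined with the prefactor $(1-1/p)>0$, this shows that at the transition value of $t$ where $\tau_p(t)$ is smallest positive (namely $t=s_2$, where $p^{\tau_p(t)}=p^{-M}$ with $M=\max_i\ord_p(a_i)$ in the relevant worst case), the corresponding summand is nonnegative, while all other terms together with the $\omega_p$-correction lose at most the factor $p^{-1/2}$ and at worst one additional negative term. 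Summing the main term $1$, the guaranteed positive term $(1-1/p)p^{-M}$, and the extremal negative contribution $-p^{-1/2}\cdot p^{-M}$ (from the $\omega_p$-term at the same scale) yields $I_p(2n;\phi)\geq (1-p^{-1/2})\cdot 2p^{-M}=2(1-p^{-1/2})p^{-\max_i\ord_p(a_i)}$, as required. The hardest step in writing this out rigorously is bookkeeping the sign of $\delta_p(t)$ at each $t\leq\min(\td,\mathfrak{t}_n)$ simultaneously across the different configurations of $(N_p,D_p)$, which is the obstacle I would expect to spend the most effort on.
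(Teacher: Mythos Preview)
Your reduction to bounding $I_p(2n;\phi)$ and the identification of $t_i=\ord_p(a_i)$, $N_p=\{i:p\nmid(m_i-2)\}$, $D_p=\{i:p\mid(m_i-2)\}$ are correct, and you are right that Lemma~\ref{thm::nondyadicgrowth} is the key input for controlling signs. However, the upper bound as you have written it has a genuine gap.

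The problem is the ``plateau'' of $\tau_p$. With $s_1\leq s_2\leq\cdots$ the sorted values of $\{t_i:i\in N_p\}$, the function $\tau_p(t)$ has slope $0$ on the interval $s_2< t\leq s_3$ and is constantly equal to $(s_1+s_2)/2$ there; your geometric-tail estimate only kicks in once the slope becomes negative. If $|N_p|\leq 2$ there is no $s_3$ at all, and even for $|N_p|\in\{3,4\}$ the contribution of this flat stretch is $(1-p^{-1})(s_3-s_2)\,p^{(s_1+s_2)/2}$, which is not bounded by anything in your argument. Your displayed bound $1+(1-p^{-1})/(1-p^{-3/2})+p^{-1/2}$ implicitly assumes $s_1=\cdots=s_5=0$, which is exactly the case where there is no plateau; in the genuinely delicate cases (e.g.\ $s_1=s_2=0$, $s_3=2$) the sum picks up two terms of size $1$ before decaying, and one has to check by hand that the total stays below $3$.

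The paper closes this gap not by a sharper geometric estimate but by a structural case split coming from the escalator tree. From Table~\ref{tbl::truant2} one has $a_1=1$ and $a_2\leq 3$, which forces $t_1=t_2=0$; if $\td\geq 1$ this in turn forces $1,2\in N_p$, so $s_1=s_2=0$. Then $t_{\lambda(3)}\geq 1$ implies $p\mid a_3$, and since $a_3\leq 20$ one gets $\ord_p(a_3)\leq 2$, hence $s_3\leq t_3\leq 2$. This bounds the plateau length by $2$, and the paper then enumerates six explicit configurations of $(\td,t_{\lambda(3)},t_{\lambda(4)})$ and bounds $I_p$ in each one directly. Lemma~\ref{thm::nondyadicgrowth} enters precisely in the two configurations where the plateau has length $1$ or $2$ and only two indices lie in $N_p$ below it: there it pins down $\delta_p(1)=+1$ (and $\delta_p(2)=+1$), which is what makes both the upper and lower bounds work simultaneously. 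Your proposal invokes the lemma only for the lower bound and at the wrong scale (at $t=s_2$ rather than on the plateau $t\leq s_3$); without the case analysis you cannot verify that its hypotheses are actually met, nor that the resulting sign information suffices.
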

\begin{proof}
We apply Theorem \ref{thm::formulalocaldensityp} to $I_p(2n;\phi)$ with $\phi$ defined in (\ref{eqn::localdensityphi}). Following the notation in Theorem \ref{thm::formulalocaldensityp}, we have $t_i=\ord_p(a_i)$ for any $1\leq i\leq r$. Moreover, we have $i\in D_p$ if $p\mid(m_i-2)$ and $i\in N_p$ if $p\nmid(m_i-2)$. Reindexing by a permutation $\lambda\in S_r$, we may assume that $t_{\lambda(1)}\leq t_{\lambda(2)}\leq\cdots\leq t_{\lambda(r)}$ and $\lambda(i)\leq\lambda(j)$ if $t_{\lambda(i)}=t_{\lambda(j)}$ for any $1\leq i\leq j\leq r$. Now, we show that one of the following condition
\begin{enumerate}[leftmargin=*]
\item $\td=0$;
\item $\td\geq1$, $t_{\lambda(1)}=t_{\lambda(2)}=t_{\lambda(3)}=0$;
\item $\td=1$, $t_{\lambda(1)}=t_{\lambda(2)}=0,t_{\lambda(3)}=1$;
\item $\td\geq2$, $t_{\lambda(1)}=t_{\lambda(2)}=0,t_{\lambda(3)}=t_{\lambda(4)}=1$;
\item $\td\geq2$, $t_{\lambda(1)}=t_{\lambda(2)}=0,t_{\lambda(3)}=1,t_{\lambda(4)}\geq2$, $(-a_{\lambda(1)}a_{\lambda(2)}(m_{\lambda(1)}-2)(m_{\lambda(2)}-2)/p)=1$;
\item $\td\geq2$, $t_{\lambda(1)}=t_{\lambda(2)}=0,t_{\lambda(3)}=2$, $(-a_{\lambda(1)}a_{\lambda(2)}(m_{\lambda(1)}-2)(m_{\lambda(2)}-2)/p)=1$.
\end{enumerate}

Clearly $\td\geq0$ by definition. If $\td\geq0$, then condition (1) holds. So we may assume that $\td\geq1$. In this case, we have $p\nmid a_1a_2$. Indeed, by Table \ref{tbl::truant2}, we have $p\nmid a_1$, and while if $p\mid a_2$, then $p=3$ and $m_1=5$. Therefore $p\mid a_2$ implies that $\td=0$, which is a contradiction. Hence we have $p\nmid a_1a_2$, and then $\lambda(i)=i$ and $t_{\lambda(i)}=0$ for any $i\in\{1,2\}$. If $t_{\lambda(3)}=0$, then condition (2) holds. So we may assume that $t_{\lambda(3)}\geq 1$ and in particular $p\mid a_3$. In this case, Table \ref{tbl::truant2} implies that $3\leq p\leq 19$ and $\ord_p(a_3)\leq 2$. Assuming that $\ord_p(a_3)=1$, then we have $\lambda(3)=3$ and $t_{\lambda(3)}=1$. If $p\mid (m_3-2)$, then condition (3) holds, while if $t_{\lambda(4)}=1$, then either condition (3) or condition (4) holds. Now assume that $p\nmid (m_3-2)$ and $t_{\lambda(4)}\geq2$. Then $\td\geq2$ and by Lemma \ref{thm::nondyadicgrowth}, we see that condition (5) holds. So we have exhausted all of these cases with $\ord_p(a_3)=1$. Assuming that $\ord_3(a_3)=2$, we see that condition (6) holds by Lemma \ref{thm::nondyadicgrowth}.

Next we bound $I_p(2n;\phi)$ using Theorem \ref{thm::formulalocaldensityp}. If condition (1) holds, then we have $I_p(2n;\phi)=1$. If condition (2) holds, we see that $\lambda(i)\in N_p$ for $1\leq i\leq3$ and therefore $\tau_p(t)\leq-\frac{t}{2}$ for any integer $t\geq1$. Thus, we have
$$
|I_p(2n;\phi)-1|\leq\left(1-p^{-1}\right)\sum_{1\leq t\leq \mathfrak{t}_n}p^{-\frac{t}{2}}+|\omega_p|\cdot p^{-\frac{\mathfrak{t}_n+1}{2}}\leq\left(1-p^{-1}\right)\sum_{1\leq t\leq \mathfrak{t}_n+1}p^{-\frac{t}{2}}\leq p^{-\frac{1}{2}}+p^{-1}.
$$
Suppose that condition (3) holds next. If $\mathfrak{t}_n=0$, then we have $1-p^{-1}\leq I_p(2n;\phi)\leq 1+p^{-1}$. If $\mathfrak{t}_n\geq1$, then we have $|I_p(2n;\phi)-1|\leq1-p^{-1}$. 

If condition (4) holds, we have $\lambda(i)\in N_p$ for $1\leq i\leq 4$. It follows that $\tau_p(t)\leq1-t$ for any integer $t\geq1$. There are four different cases we need to consider separately. First, if $\td\neq\infty$ and $\mathfrak{t}_n<\td$, then we have
$$
|I_p(2n;\phi)-1|\leq\left(1-p^{-1}\right)\sum_{1\leq t\leq \mathfrak{t}_n}p^{1-t}+|\omega_p|\cdot p^{-\mathfrak{t}_n}\leq1-p^{-\mathfrak{t}_n+1}\leq1-p^{-\td}.
$$
Second, if $\td\neq\infty$ and $\mathfrak{t}_n\geq \td$, then the first sum runs over $1\leq t\leq \td$ and $\omega_p=0$. Similar arguments yield that $|I_p(2n;\phi)-1|\leq1-p^{-\td}$. Third, if $\td=\infty$ and $\mathfrak{t}_n\leq t_{\lambda(5)}$, then we have 
\begin{align*}
|I_p(2n;\phi)-1|\leq \left(1-p^{-1}\right)\sum_{1\leq t\leq \mathfrak{t}_n}p^{1-t} + p^{-\frac{1}{2}-\mathfrak{t}_n}=1-(1-p^{-\frac{1}{2}})p^{-\mathfrak{t}_n}\leq 1-(1-p^{-\frac{1}{2}})p^{-t_{\lambda(5)}}.
\end{align*}
Lastly, if $\td=\infty$ and $\mathfrak{t}_n\geq t_{\lambda(5)}+1$, then we have $\tau_p(t)\leq\frac{1}{2}-t$ for any integer $t\geq t_{\lambda(5)}+1$. Then, we have
\begin{align*}
|I_p(2n;\phi)-1|&\leq\left(1-p^{-1}\right)\bigg(\sum_{1\leq t\leq t_{\lambda(5)}}p^{1-t}+\sum_{t_{\lambda(5)}+1\leq t\leq \mathfrak{t}_n}p^{\frac{1}{2}-t}\bigg)+p^{-\mathfrak{t}_n-1}\\
&=1-p^{-t_{\lambda(5)}}+p^{-t_{\lambda(5)}-\frac{1}{2}}-p^{-\mathfrak{t}_n-\frac{1}{2}}+p^{-\mathfrak{t}_n-1}\leq1-\left(1-p^{-\frac{1}{2}}\right)p^{-t_{\lambda(5)}}.
\end{align*}

If condition (5) holds, then we see that $\delta_p(1)=1$. If $\mathfrak{t}_n=0$, then we have $I_p(2n,\phi)=1-p^{-1}$. If $\mathfrak{t}_n\geq1$, then we have $\tau_p(t)\leq\frac{1-t}{2}$ for any integer $t\geq1$ and we conclude that 
\begin{align*}
\left|I_p(2n;\phi)-2+p^{-1}\right|&\leq\left(1-p^{-1}\right)\sum_{2\leq t\leq \mathfrak{t}_n} p^{\frac{1-t}{2}} + p^{-\frac{1+\mathfrak{t}_n}{2}}\\
&=p^{-\frac{1}{2}}\left(1+p^{-\frac{1}{2}}\right)\left(1-p^{-\frac{\mathfrak{t}_n}{2}}\right)+ p^{-\frac{1+\mathfrak{t}_n}{2}}\leq p^{-\frac{1}{2}}+p^{-1}.
\end{align*}

Finally we suppose that condition (6) holds. Then we have $\delta_p(1)=\delta_p(2)=1$. If $\mathfrak{t}_n=0$, then we have $I_p(2n;\phi)=1-p^{-1}$. If $\mathfrak{t}_n=1$, then we have $|I_p(2n;\phi)-1|=1-p^{-1}$. If $\mathfrak{t}_n\geq2$ and $\td=2$, then we have $I_p(2n;\phi)= 3-2p^{-1}$. If $\mathfrak{t}_n\geq2$ and $\td\geq3$, then $\tau_p(t)\leq 1-\frac{t}{2}$ for $t\geq 3$ and we have 
\begin{align*}
\left|I_p(2n;\phi)-3+2p^{-1}\right|&\leq\left(1-p^{-1}\right)\sum_{3\leq t\leq \mathfrak{t}_n} p^{1-\frac{t}{2}} +p^{-\frac{\mathfrak{t}_n}{2}}\\
&=\left(p^{-\frac{1}{2}}+p^{-1}\right)\left(1-p^{1-\frac{\mathfrak{t}_n}{2}}\right)+p^{-\frac{\mathfrak{t}_n}{2}}\leq p^{-\frac{1}{2}}+p^{-1}.
\end{align*}
Combining the calculations with (\ref{eqn::nodelocaldensitystartup}), we obtain the desired results.
\end{proof}
\fi

\ifnew
\begin{proposition}
\label{thm::dyadicbound}
Suppose that $F=\sum_{i=1}^{r}a_iP_{m_i}$ is a node of depth $r\geq5$ in the escalator tree $T_{\infty}$. Let $X,\mu,\rho$ be the shifted lattice and the integers constructed in Definition \ref{thm::defineshiftedlattice} corresponding to $F$. For any $2$-adic integer $n\in\ZZ_2$, we have
$$
2^{-\max(\ord_2(a_i))-1}\leq\frac{\beta_2(\mu n+\rho;X)}{2^{\ord_2(\Lambda)-\ord_p\left(\prod_{i}2(m_i-2)\right)}}\leq5,
$$
where $\Lambda\coloneqq\lcm(m_1-2,\ldots,m_r-2)$.
\end{proposition}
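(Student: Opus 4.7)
The plan is to mirror the structure of the proof of Proposition \ref{thm::nondyadicbound}, with the odd-prime formula replaced by the dyadic formula from Theorem \ref{thm::formulalocaldensity2}. Since the polynomial $\phi$ in (\ref{eqn::localdensityphi}) is diagonal, we set $r_1 = r$ and $r_2 = r_3 = 0$ in Theorem \ref{thm::formulalocaldensity2}, so the sums indexed by $N_2'$, $D_2'$, $N_2''$, $D_2''$ are empty and only the sets $D_2$, $E_2$, $N_2$ play a role. Combining (\ref{eqn::nodelocaldensitystartup}) with a short calculation shows that the asserted inequality is equivalent to $2^{-\max_i \ord_2(a_i) - 3} \leq I_2(2n;\phi) \leq 5/4$, which is what I aim to prove.

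First, I would identify the partition $\{D_2, E_2, N_2\}$ from the parity of the $m_i$: because $(m_i-2) - (m_i-4) = 2$, an odd $m_i$ puts $i$ into $E_2$ with $t_i = \ord_2(a_i)$; an $m_i \equiv 0 \pmod{4}$ puts $i$ into $N_2$ with $t_i = \ord_2(a_i) + 1$; and $m_i \equiv 2 \pmod{4}$ (necessarily $m_i \geq 10$, since $m_i = 6$ is excluded) puts $i$ into $D_2$ with $t_i = \ord_2(a_i) + 1$. Since $F$ has depth $r \geq 5$, Table \ref{tbl::truant2} forces $a_1 = 1$ and $a_2 \in \{1,2,3\}$, while Lemma \ref{thm::dyadicgrowth} sharply restricts the depth-$3$ sub-nodes in which all $m_i$ are divisible by $4$. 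Iterating these constraints through Lemma \ref{thm::boundingtruant} yields that at least two indices have $t_i \leq 1$ and that $\max_i \ord_2(a_i)$ is controlled by the data tabulated at depth $\leq 2$.

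Then I would mount a case analysis parallel to the six-case split in the non-dyadic proof, sorting $t_{\lambda(1)} \leq t_{\lambda(2)} \leq \cdots$ and comparing $\td$ with $\mathfrak{t}_n$. In each case I plug into Theorem \ref{thm::formulalocaldensity2} and observe that once at least four indices satisfy $t_i + 1 < t$, the exponent $\tau_2(t)$ decays geometrically in $t$, while the contribution of $D_2$- and $E_2$-indices accelerates that decay further. The upper bound $I_2(2n;\phi) \leq 5/4$ then follows from $|\delta_2(t)| \leq 1$ together with a geometric tail estimate, and the lower bound $I_2(2n;\phi) \geq 2^{-\max_i \ord_2(a_i) - 3}$ follows from isolating the $t = 0$ term along with a small explicit window of $t$-values and dominating the remainder by the same tail. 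Multiplying back by the prefactor $2^{2 + \ord_2(\Lambda) - \ord_2(\prod_i 2(m_i-2))}$ from (\ref{eqn::nodelocaldensitystartup}) then yields the stated inequality.

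The main obstacle is bookkeeping: Theorem \ref{thm::formulalocaldensity2} carries three dyadic quirks absent from the odd-prime analogue --- the indicator $\chi_{4\ZZ_2}(\xi_2(t))$, the three-unit shift in the upper limit $\min(\td, \mathfrak{t}_n + 3)$, and the omission of $t$-values of the form $t_i + 1$ for $i \in N_2$ --- each of which forces the case tree to branch further. The lower bound is the more delicate of the two inequalities, particularly in sub-cases where several $t_i$ are positive and one must extract a contribution from a larger $t$ whose sign is controlled by the interaction between $\chi_{4\ZZ_2}(\xi_2(t))$ and the dyadic Kronecker symbol appearing in $\delta_2(t)$.
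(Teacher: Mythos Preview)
Your overall strategy is the paper's: apply Theorem~\ref{thm::formulalocaldensity2} with $r_2=r_3=0$, read off the partition $D_2,E_2,N_2$ from $m_i\pmod 4$ and the values $t_i$ as you describe, use Table~\ref{tbl::truant2} together with Lemma~\ref{thm::dyadicgrowth} to pin down $t_{\lambda(1)},\dots,t_{\lambda(4)}$, and then bound the sum over $t$ by geometric tails once enough indices are active. The paper executes this with a \emph{seven}-way case split (not six) on $\td$ and $(t_{\lambda(1)},\dots,t_{\lambda(4)})$; it also separates off the $t=1$ term, writing $I_2(2n;\phi)=2+R_2(2n;\phi)$ whenever $\td\geq 1$, and in two of the seven cases (its conditions (6) and (7)) it relies on a short computer verification over $m_{\lambda(1)},m_{\lambda(2)},m_{\lambda(3)}\pmod{16}$ and $n\pmod 8$ to control the sign of a couple of low-$t$ terms. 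Your sketch does not anticipate this finite check, and it is genuinely needed in those cases.

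There is, however, a concrete obstruction to your stated target $I_2(2n;\phi)\leq 5/4$. Your reduction from (\ref{eqn::nodelocaldensitystartup}) is arithmetically correct: the quotient in the statement equals $4\,I_2(2n;\phi)$. But $I_2(2n;\phi)\leq 5/4$ is false in the most common situation. If $m_1$ is odd then $1\in E_2$ with $t_1=0$, so $\td\leq 1$; when $\td=1$ the sum in Theorem~\ref{thm::formulalocaldensity2} has only the term $t=1$, and one checks $\delta_2(1)=1$, $\tau_2(1)=1$, hence $I_2(2n;\phi)=2$. More generally the paper's own case analysis produces $I_2=2+R_2$ with $R_2$ ranging up to about $3$. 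The mismatch is a typo in the printed statement: the intended denominator is $2^{\ord_2(4\Lambda)-\ord_2(\prod_i 2(m_i-2))}$, so that the quotient equals $I_2(2n;\phi)$ itself, and the inequalities to prove are
\[
2^{-\max_i\ord_2(a_i)-1}\leq I_2(2n;\phi)\leq 5,
\]
which is exactly what the paper's case-by-case bounds on $R_2$ establish. Aim for those targets rather than the ones you wrote; otherwise your plan is sound and parallels the paper closely.
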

\begin{proof}
We apply Theorem \ref{thm::formulalocaldensity2} to $I_2(2n;\phi)$ with $\phi$ defined in (\ref{eqn::localdensityphi}). Follow the notation in Theorem \ref{thm::formulalocaldensity2}. We have $t_i=\ord_2(a_i)+\varepsilon$ for any $1\leq i\leq r$ where 
$$
\varepsilon\coloneqq
\begin{dcases}
1,&\text{if }i\in D_2\cup N_2;\\
0,&\text{if }i\in E_2.
\end{dcases}
$$
Moreover, we have $i\in D_2$ if $m_i\equiv2\pmod{4}$, $i\in E_2$ if $m_i\equiv1\pmod{2}$, and $i\in N_2$ if $m_i\equiv0\pmod{4}$. Reindexing by a permutation $\lambda\in S_r$, we may assume that $t_{\lambda(1)}\leq t_{\lambda(2)}\leq\cdots\leq t_{\lambda(r)}$ and $\lambda(i)\leq\lambda(j)$ if $t_{\lambda(i)}=t_{\lambda(j)}$ for any $1\leq i\leq j\leq r$. Now, we show that one of the following condition:
\begin{enumerate}[leftmargin=*]
\item $\td\leq2$;
\item $\td\geq3$, $t_{\lambda(1)}=t_{\lambda(2)}=1,1\leq t_{\lambda(3)}\leq2$, $t_{\lambda(3)}\leq t_{\lambda(4)}\leq4$, $\lambda(3)\in N_2$, if $t_{\lambda(4)}=4$ then $\lambda(4)\in D_2\cup N_2$, if $t_{\lambda(4)}\geq3$ then $t_{\lambda(3)}=2$;
\item $\td=3$, $t_{\lambda(1)}=1,t_{\lambda(2)}=2$, $\lambda(2)\in N_2$; 
\item $\td\geq4$, $t_{\lambda(1)}=1,t_{\lambda(2)}=2,t_{\lambda(3)}=2,2\leq t_{\lambda(4)}\leq3$, $\lambda(4)\in N_2$;
\item $\td\geq4$, $t_{\lambda(1)}=1,t_{\lambda(2)}=2,t_{\lambda(3)}=3,3\leq t_{\lambda(4)}\leq4$, $\lambda(3)\in N_2$;
\item $\td\geq3$, $t_{\lambda(1)}=t_{\lambda(2)}=t_{\lambda(3)}=1,3\leq t_{\lambda(4)}\leq6$, if $t_{\lambda(4)}=6$ then $\lambda(4)\in D_2\cup N_2$, $\lambda(i)=i$ for $1\leq i\leq 3$, $(a_1,a_2,a_3)=(1,1,1),(1,1,3)$;
\item $\td\geq3$, $t_{\lambda(1)}=t_{\lambda(2)}=1,t_{\lambda(3)}=2,4\leq t_{\lambda(4)}\leq7$, if $t_{\lambda(4)}=4$ then $\lambda(4)\in E_2$, if $t_{\lambda(4)}=7$ then $\lambda(4)\in D_2\cup N_2$, $\lambda(i)=i$ for $1\leq i\leq 3$, $(a_1,a_2,a_3)=(1,1,2)$.
\end{enumerate}

If $\td\leq2$, then condition (1) holds. Suppose that $\td\geq3$ next. From this assumption, we have $t_{\lambda(1)}=1$ and $\lambda(1)=1$. Moreover, we have $1\leq a_2\leq 2$ from Table \ref{tbl::truant2}. Then it follows that $m_1,m_2\equiv0\pmod{4}$. We first deal with the case $a_2=2$. Since $t_2=2$, we have $t_{\lambda(2)}\in\{1,2\}$, depending on whether there exists $3\leq i\in N_2$ with $a_{i}$ odd, in which case $t_{\lambda(2)}=1$. If $t_{\lambda(2)}=1$, we can conclude that $\lambda(2)\neq2$. Since $2\leq a_3\leq 5$ by Table \ref{tbl::truant2}, $\lambda(2)=3$ when $a_3=3,5$. If $\lambda(2)=3$, we have $a_3\leq a_4\leq 12$ by Lemma \ref{thm::dyadicgrowth}. Thus, we have $1\leq t_{\lambda(3)}\leq 2$, $\lambda(3)\in N_2$, and $t_{\lambda(3)}\leq t_{\lambda(4)}\leq4$ and if $t_{\lambda(4)}=4$ then $\lambda(4)\in D_2\cup N_2$. Thus, condition (2) holds. If $\lambda(2)\neq3$, then $a_3=2,4$. In this case, condition (2) also holds. 

If $t_{\lambda(2)}=2$, then $\lambda(2)=2$ and $\lambda(2)\in N_2$ by our choice of $\lambda$. If $\td=3$, then condition (3) holds. If $\td\geq4$, we shall show that condition (4) or condition (5) holds. Since $m_1,m_2\equiv0\pmod{4}$, we have $a_3=2,4$ from Table \ref{tbl::truant2} and from our assumption $t_{\lambda(2)}=2$. If $a_3=2$, we have $t_{\lambda(3)}=2$, $\lambda(3)=3$, and $m_3\equiv0\pmod{4}$. By Lemma \ref{thm::dyadicgrowth}, we see that $2\leq a_4\leq7$. So, we have $2\leq t_{\lambda(4)}\leq3$ and $\lambda(4)\in N_2$. Therefore condition (4) holds. If $a_3=4$, we have $m_3\equiv0\pmod{4}$ since $\td\geq4$. It follows that $2\leq t_{\lambda(3)}\leq3$. If $t_{\lambda(3)}=2$, then condition (4) holds. If $t_{\lambda(3)}=3$, we see that $\lambda(3)=3$ and $\lambda(3)\in N_2$ by our choice of $\lambda$. Moreover, by Lemma \ref{thm::dyadicgrowth}, we have $4\leq a_4\leq 15$. Therefore we have $3\leq t_{\lambda(4)}\leq 4$. This shows that condition (5) holds. 

Next we deal with the case $a_2=1$. Then $t_{\lambda(2)}=1$ and $\lambda(2)=2$. From Table \ref{tbl::truant2}, we notice that $a_3\leq3$. Thus we have $1\leq t_{\lambda(3)}\leq 2$ and $\lambda(3)\in N_2$ by our choice of $\lambda$. If $t_{\lambda(3)}=1$, then Lemma \ref{thm::dyadicgrowth} (1) and (3) implies that $t_{\lambda(3)}\leq t_{\lambda(4)}\leq 6$, with $t_{\lambda(4)}=6$ only possible if $\lambda(4)=4\in D_2\cup N_2$ and $a_4=32$, giving condition (6) in that case. If $t_{\lambda(4)}\leq 2$, then condition (2) holds. If $3\leq t_{\lambda(4)}\leq 6$, then condition (6) holds. 

Now suppose that $t_{\lambda(3)}=2$. By Lemma \ref{thm::dyadicgrowth}, we have $2=t_{\lambda(3)}\leq t_{\lambda(4)}\leq 7$, with $t_{\lambda(4)}=7$ only possible if $t_4=64$ and $\lambda(4)=4\in D_2\cup N_2$, giving condition (7) in that case. If $2\leq t_{\lambda(4)}\leq 3$, then condition (2) holds. If $5\leq t_{\lambda(4)}\leq 7$, then condition (7) holds. Finally, if $t_{\lambda(4)}=4$, then either condition (2) holds or condition (7) holds, depending on whether $\lambda(4)\in D_2\cup N_2$ or $\lambda(4)\in E_2$.

Then we bound $I_2(2n;\phi)$ using the explicit formula from Theorem \ref{thm::formulalocaldensity2}. If $\td=0$, then $I_2(2n;\phi)=1$. Otherwise, we have $I_2(2n;\phi)=2+R_2(2n;\phi)$, where we define
$$
R_2(2n;\phi)\coloneqq\sum_{\substack{2\leq t\leq\min(\td,\mathfrak{t}_n+3)\\t\neq t_i+1\text{ for }i\in N_2}}\delta_2(t)2^{\tau_2(t)-1}.
$$
If condition (1) holds, then $R_2(2n;\phi)=0$. Suppose that condition (2) holds. If $\lambda(4)\in D_2\cup E_2$, then $|R_2(2n;\phi)|\leq1$. If $\lambda(4)\in N_2$ and $t_{\lambda(4)}\leq t_{\lambda(3)}+1$, we have $\delta_2(t)=0$ for $2\leq t\leq t_{\lambda(4)}+1$ and $\tau_2(t)-1\leq-t+\frac{t_{\lambda(3)}+t_{\lambda(4)}+4}{2}$ for $t\geq t_{\lambda(4)}+2$. If $\lambda(5)\in D_2\cup E_2$, then
$$
|R_2(2n;\phi)|\leq\sum_{t_{\lambda(4)}+2\leq t\leq t_{\lambda(5)}+1}2^{\tau_2(t)-1}\leq2-2^{-t_{\lambda(5)}+\frac{t_{\lambda(3)}+t_{\lambda(4)}+2}{2}}.
$$
If $\lambda(5)\in N_2$, then $\tau_2(t)-1\leq-\frac{3t}{2}+\frac{t_{\lambda(3)}+t_{\lambda(4)}+t_{\lambda(5)}+5}{2}$ for $t\geq t_{\lambda(5)}+2$. We have
\begin{align*}
|R_2(2n;\phi)|&\leq\sum_{t_{\lambda(4)}+2\leq t\leq t_{\lambda(5)}}2^{\tau_2(t)-1}+\sum_{t_{\lambda(5)}+2\leq t\leq\infty}2^{\tau_2(t)-1}\\
&\leq\max\left(0, 2-2^{-t_{\lambda(5)}+\frac{t_{\lambda(3)}+t_{\lambda(4)}+4}{2}}\right)+2^{-t_{\lambda(5)}+\frac{t_{\lambda(3)}+t_{\lambda(4)}+1}{2}}\\
&\leq2-2^{-t_{\lambda(5)}+\frac{t_{\lambda(3)}+t_{\lambda(4)}+2}{2}}.
\end{align*}
Similarly if $\lambda(4)\in N_2$ and $t_{\lambda(4)}=t_{\lambda(3)}+2$ then $t_{\lambda(3)}=2$ and $t_{\lambda(4)}=4$. Then, we see that,
$$
|R_2(2n;\phi)|\leq|\delta_2(4)|\cdot2^{\tau_2(4)-1}+\sum_{t_{\lambda(4)}+2\leq t\leq t_{\lambda(5)}+1}2^{\tau_2(t)-1}\leq2-2^{-t_{\lambda(5)}+\frac{t_{\lambda(4)}+4}{2}}.
$$
If condition (3) holds, we have $R_2(2n;\phi)=0$. If condition (4) or condition (5) holds, arguing as the case of condition (2), we see that
$$
|R_2(2n;\phi)|\leq2-2^{-t_{\lambda(5)}+\frac{t_{\lambda(4)}+4}{2}}.
$$
Suppose that condition (6) holds. If $t_{\lambda(4)}=3$ with $\lambda(4)\in D_2$, we have $|R_2(2n;\phi)|\leq1$. If $t_{\lambda(4)}=4$ with $\lambda(4)\in D_2$ or $t_{\lambda(4)}=3$ with $\lambda(4)\in E_2$, then 
$$
R_2(2n;\phi)=\delta_2(3)2^{\tau_2(3)-1}+\epsilon\cdot\delta_2(4)2^{\tau_2(4)-1},
$$
where $\epsilon\coloneqq0$ if $t_i=3$ and $i\in N_2$ for some $1\leq i\leq r$ and $\epsilon\coloneqq1$ otherwise. Notice that the right hand side is determined by $\epsilon$, $m_{\lambda(1)},m_{\lambda(2)},m_{\lambda(3)}\pmod{16}$ and $n\pmod{8}$. By a computer program and by using Lemma \ref{thm::dyadicgrowth} to get rid of certain choices of $m_{\lambda(1)},m_{\lambda(2)},m_{\lambda(3)}$, we can conclude that $-1\leq R_2(2n;\phi)\leq2$. If $5\leq t_{\lambda(4)}\leq 6$ with $\lambda(4)\in D_2$ or $4\leq t_{\lambda(4)}\leq 5$ with $\lambda(4)\in E_2$, we can argue in a similar manner to conclude that $-1\leq R_2(2n;\phi)\leq2$. If $\lambda(4)\in N_2$, we have $\tau_2(t)-1=-t+\frac{5+t_{\lambda(4)}}{2}$ for any integer $t\geq t_{\lambda(4)}+2$. Thus, for $3\leq t_{\lambda(4)}\leq6$, we have
$$
\left|\sum_{t_{\lambda(4)}+2\leq t\leq \td}2^{\tau_2(t)-1}\right|\leq1-2^{-t_{\lambda(5)}+\frac{t_{\lambda(4)}+3}{2}}.
$$
Combining with the calculation of the cases $\lambda(4)\in D_2\cup E_2$, we have
$$
2^{-t_{\lambda(5)}+\frac{t_{\lambda(4)}+3}{2}}-2\leq R_2(2n;\phi)\leq3-2^{-t_{\lambda(5)}+\frac{t_{\lambda(4)}+3}{2}}.
$$
The case of condition (7) is similar to that of condition (6). Using Lemma \ref{thm::dyadicgrowth}, we can prove that $-\frac{3}{2}\leq R_2(2n;\phi)\leq\frac{5}{2}$. Combining the calculations with (\ref{eqn::nodelocaldensitystartup}), we obtain the desired results.
\end{proof}
\fi

Now we can prove a lower bound on the Eisenstein part.

\begin{proposition}
\label{thm::nodelowerbound}
Suppose that $F=\sum_{i=1}^{r}a_iP_{m_i}$ is a node of depth $r\geq5$ in the escalator tree $T_{\infty}$. Let $X,\mu,\rho$ be the shifted lattice and the integers constructed in Definition \ref{thm::defineshiftedlattice} corresponding to $F$. For any real number $\varepsilon>0$, we have
$$
a_{E_X}(\mu n+\rho)\gg_r\frac{(\mu n+\rho)^{\frac{r}{2}-1}}{\Lambda^{r-1+\varepsilon}\cdot\Big(\prod_{i}a_i\Big)^{\frac{3}{2}+\varepsilon}},
$$
for any integer $n\geq0$, where $\Lambda\coloneqq\lcm(m_1-2,\ldots,m_r-2)$.
\end{proposition}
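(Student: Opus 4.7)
The plan is to assemble the Siegel--Minkowski formula (\ref{eqn::siegelminkowski}) together with the three local-density estimates already proved (Propositions~\ref{thm::almostallbound}, \ref{thm::nondyadicbound}, \ref{thm::dyadicbound}) and then do bookkeeping. First I compute the discriminant. Since the Gram matrix of $L_{X}$ is diagonal with entries $\alpha_{i}=a_{i}\Lambda/(m_{i}-2)$, the dual lattice is $L_{X}^{\dual}=\bigoplus_{i}\ZZ\langle \tfrac{1}{2\alpha_{i}}e_{i}\rangle$, so
\[
[L_{X}^{\dual}:L_{X}]=2^{r}\prod_{i}\alpha_{i}=\frac{2^{r}\Lambda^{r}\prod_{i}a_{i}}{\prod_{i}(m_{i}-2)}.
\]
Plugging this into (\ref{eqn::siegelminkowski}) contributes the factor $\sqrt{\prod_{i}(m_{i}-2)}/(2^{r/2}\Lambda^{r/2}\sqrt{\prod_{i}a_{i}})$ to $a_{E_{X}}(\mu n+\rho)$, with all purely $r$-dependent quantities (including $\Gamma(r/2)$) absorbed into the implicit constant.

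Next I bound the global Euler product $\prod_{p}\beta_{p}(\mu n+\rho;X)$ by splitting the primes into three classes. For primes $p\nmid 2\prod_{i}a_{i}(m_{i}-2)$ Proposition~\ref{thm::almostallbound} contributes a factor $\sim 1$. For odd primes $p\mid\prod_{i}a_{i}(m_{i}-2)$ Proposition~\ref{thm::nondyadicbound} contributes at least
\[
2\bigl(1-p^{-1/2}\bigr)p^{-\max_{i}\ord_{p}(a_{i})}\cdot p^{\ord_{p}(\Lambda)-\ord_{p}(\prod_{i}(m_{i}-2))},
\]
and for $p=2$ Proposition~\ref{thm::dyadicbound} gives an analogous estimate with an extra $2^{-r}$ factor (from the discrepancy $\prod_{i}2(m_{i}-2)$ versus $\prod_{i}(m_{i}-2)$) that is absorbed into $\gg_{r}$. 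Telescoping the exponents over all primes:
\[
\prod_{p}p^{\ord_{p}(\Lambda)-\ord_{p}(\prod_{i}(m_{i}-2))}=\frac{\Lambda}{\prod_{i}(m_{i}-2)},\qquad \prod_{p}p^{-\max_{i}\ord_{p}(a_{i})}=\frac{1}{\lcm(a_{1},\ldots,a_{r})}.
\]
The defect $\prod_{p\mid 2\prod a_{i}(m_{i}-2)}(1-p^{-1/2})$ will be handled by the standard estimate $\prod_{p\mid N}(1-p^{-1/2})\gg_{\varepsilon}N^{-\varepsilon}$, which follows from the divisor bound $\omega(N)\ll_{\varepsilon}N^{\varepsilon}$; this is the source of the $\varepsilon$-loss in the statement.

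Putting the two pieces together, $a_{E_{X}}(\mu n+\rho)$ is bounded below, up to a constant depending on $r$ and $\varepsilon$, by
\[
(\mu n+\rho)^{\frac{r}{2}-1}\cdot\frac{\sqrt{\prod_{i}(m_{i}-2)}}{\Lambda^{r/2}\sqrt{\prod_{i}a_{i}}}\cdot\frac{\Lambda}{\lcm(a_{i})\,\prod_{i}(m_{i}-2)}\cdot\bigl(\Lambda\textstyle\prod_{i}a_{i}\bigr)^{-\varepsilon}.
\]
Simplifying the $a$-part via $\sqrt{\prod a_{i}}\cdot\lcm(a_{i})\leq (\prod a_{i})^{3/2}$ and the $(m_{i}-2)$-part via the crude bound $\prod_{i}(m_{i}-2)\leq\Lambda^{r}$ (applied inside $1/\sqrt{\prod(m_{i}-2)}$, where it lowers the expression by a factor of at most $\Lambda^{r/2}$), the $\Lambda$-exponent collapses to $-(r/2-1)-r/2=-(r-1)$ and the $a$-exponent to $-3/2$. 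Absorbing the $\varepsilon$-losses yields exactly the stated bound.

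The main technical point is making sure the exponents balance correctly after the Euler-product telescoping; the only genuinely non-trivial estimate used (beyond the three preceding propositions) is $\prod_{p\mid N}(1-p^{-1/2})\gg_{\varepsilon}N^{-\varepsilon}$, which is standard. Everything else is arithmetic bookkeeping.
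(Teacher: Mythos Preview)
Your proof is correct and follows essentially the same approach as the paper: compute the discriminant of $L_X$, plug into the Siegel--Minkowski formula (\ref{eqn::siegelminkowski}), apply Propositions~\ref{thm::almostallbound}, \ref{thm::nondyadicbound}, and \ref{thm::dyadicbound} to the three classes of primes, and then do the telescoping bookkeeping. The paper's proof is much more terse (it leaves the telescoping as ``it is not hard to obtain the desired bound'') and invokes Robin's bound on the divisor function where you invoke $\prod_{p\mid N}(1-p^{-1/2})\gg_{\varepsilon}N^{-\varepsilon}$, but these are the same input; your version simply makes the exponent-balancing explicit.
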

\begin{proof}
By our construction of the base lattice $L_X$ in Definition \ref{thm::defineshiftedlattice}, we have 
$$
[L_X^{\dual}\colon L_X]\ll_r\Lambda^r\prod_ia_i(m_i-2)^{-1}.
$$
Plugging this upper bound and Proposition \ref{thm::almostallbound} into (\ref{eqn::siegelminkowski}), we have
$$
a_{E_X}(\mu n+\rho)\gg_r\frac{(\mu n+\rho)^{\frac{r}{2}-1}}{\Lambda^{r-1}\cdot\Big(\prod_{i}a_i\Big)^{\frac{1}{2}}}\prod_{p\mid\prod_ia_i(m_i-2)}\beta_p(\mu n+\rho;X).
$$
Using Proposition \ref{thm::nondyadicbound} and Proposition \ref{thm::dyadicbound} to bound the remaining local densities and using Robin's bound \cite[Theorem 11]{robin1983} on the divisor function, it is not hard to obtain the desired bound.
\end{proof}

\section{Upper Bounds on the Cuspidal Part}\label{sec::cuspidal}

In this section, we prove upper bounds on the Fourier coefficient $a_{G_X}(\mu n+\rho)$ of the cuspidal part $G_X$ appearing in (\ref{eqn::splitting}). This requires the theory of modular forms. We will use the following congruence subgroups of $\SL_2(\ZZ)$:
$$
\Gamma_0(N)=\left\{\begin{pmatrix}a & b\\ c & d\end{pmatrix}\in\SL_2(\ZZ)\Biggm\vert c\equiv0\pmod{N}\right\},
$$
$$
\Gamma_1(N)=\left\{\begin{pmatrix}a & b\\ c & d\end{pmatrix}\in\Gamma_0(N)\Biggm\vert a\equiv1\pmod{N}\right\},
$$
and
$$
\Gamma(N)=\left\{\begin{pmatrix}a & b\\ c & d\end{pmatrix}\in\Gamma_1(N)\Biggm\vert b\equiv0\pmod{N}\right\}.
$$
Fix $k\in\frac{1}{2}\ZZ$. Assume that $\Gamma$ is a congruence subgroup such that $\Gamma\subseteq\Gamma_0(4)$ when $k\in\frac{1}{2}\ZZ-\ZZ$ and $\chi$ is a Dirichlet character. Letting $\theta(\tau)\coloneqq\sum_{n\in\ZZ} e^{2\pi i n^2 \tau}$ be the standard unary theta function of weight $\frac{1}{2}$, we say that a function $f:\HH\to\CC$ is a \begin{it}(holomorphic) modular form of weight $k$ and Nebentypus character $\chi$ (for the $\theta$-multiplier) on $\Gamma$\end{it} if for every $\gamma=\left(\begin{smallmatrix}a&b\\c&d\end{smallmatrix}\right)\in\Gamma$ we have 
\[
f(\gamma\tau)= \chi(d)\left(\frac{\theta(\gamma \tau)}{\theta(\tau)}\right)^{2k} f(\tau)
\]
and $f(\gamma\tau)$ grows at most polynomially in $\imag(\tau)$ as $\tau\to i\infty$; we furthermore call $f$ a \begin{it}cusp form\end{it} if $f(\gamma\tau)$ vanishes as $\tau\to i\infty$ for all $\gamma\in\Gamma$. We let $M_k(\Gamma,\chi)$ denote the space of modular forms of weight $k$ for the congruence subgroup $\Gamma$ with the Nebentypus $\chi$ and let $S_k(\Gamma,\chi)$ denote the subspace of cusp forms in $M_k(\Gamma,\chi)$.

In order to tie together the theory of modular forms and the algebraic theory of shifted lattices, we require the Siegel--Weil formula, which rewrites the Fourier coefficients of the Eisenstein series as the number of representations by the genus. To be precise, suppose that we have a shifted lattice $X$ in a quadratic space $V$. Let $O(V),O_{\AA}(V),O(X)$ denote the orthogonal group of $V$, the adelization of $O(V)$, and the subgroup of $O(V)$ consisting of isometries $\sigma$ such that $\sigma(X)=X$, respectively. By \cite[Lemma 1.2]{sun2016class}, $O(V)$ and $O_{\AA}(V)$ act on the set of shifted lattices in a quadratic space $V$. The \textit{genus} of a shifted lattice $X$ is the orbit of $X$ under the action of $O_{\AA}(V)$, denoted by $\genus(X)$. The \textit{isometry class} of a shifted lattice $X$ is the orbit of $X$ under the action of $O(V)$, denoted by $\class(X)$. By \cite[Corollary 2.3]{sun2016class}, the genus of $X$ splits into a finite number of isometry classes. So we take a set of representatives $X_1,\ldots,X_g$ of isometry classes in $\genus(X)$. The Siegel--Weil formula states that the Fourier coefficient $a_{E_X}(n)$ is a weighted sum of the number of representations by shifted lattices in $\genus(X)$ as follows,
$$
a_{E_X}(n)=\mass(X)^{-1}\sum_{i=1}^{g}\frac{r_{X_i}(n)}{|O(X_i)|},
$$
where the quantity $\mass(X)$ is the \begin{it}mass\end{it} of $X$, defined by
$$
\mass(X)\coloneqq\sum_{i=1}^{g}\frac{1}{|O(X_i)|}.
$$

Let $X$ be a positive-definite integral shifted lattice of rank $r$, level $N$, and conductor $M$. By \cite[Proposition 2.1]{shimura1973modular}, for any shifted lattice $Y$ in the genus of $X$, the theta series $\Theta_Y$ is a modular form in $M_{\frac{r}{2}}(\Gamma_0(M^2N)\cap\Gamma_1(M),\chi)$ for some Dirichlet character $\chi$. Therefore we see that $G_X=\in S_{\frac{r}{2}}(\Gamma_0(M^2N)\cap\Gamma_1(M),\chi)$. 

We decompose the cuspidal part $G_X$ by \cite[Theorem 2.5]{cho2018} as follows,
\begin{equation}
\label{eqn::chodecompostion}
G_X=\sum_{\psi}G_{X,\psi},
\end{equation}
where the sum runs over all Dirichlet character $\psi$ modulo $M$. For each Dirichlet character $\psi$ modulo $M$, we have $G_{X,\psi}\in S_{\frac{r}{2}}(\Gamma_0(M^2N),\chi\psi)$. We call $G_{X,\psi}$ the \textit{$\psi$-component} of the shifted lattice $X$. Thus, we reduce to the cases when the cusp forms are on the congruence subgroup $\Gamma_0(M^2N)$. 

We shall use the following approach to give upper bounds on the Fourier coefficients of the cusp forms. We can normalize a cusp form with respect to the Petersson norm, which is induced by the Petersson inner product. For two modular forms $f,g\in M_k(\Gamma,\chi)$ such that $fg$ is a cusp form, the Petersson inner product $\langle f,g\rangle_{\Gamma}$ is defined as
$$
\langle f,g\rangle_{\Gamma}\coloneqq\int_{\Gamma\backslash\HH}f(\tau)\overline{g(\tau)}v^k\frac{\mathrm{d}u\mathrm{d}v}{v^2},
$$
where we write $\tau=u+iv$ with $u,v\in\RR$. For a cusp form $f\in S_k(\Gamma,\chi)$, the \textit{Petersson norm} of $f$ is defined as
$$
\lVert f\rVert_{\Gamma}\coloneqq\sqrt{\langle f,f\rangle_{\Gamma}}.
$$
We note that while $\langle f,g\rangle_{\Gamma}$ depends on the choice of $\Gamma$, the normalization $\frac{\langle f,g\rangle_{\Gamma}}{[\SL_2(\ZZ)\colon\Gamma]}$ is independent of the choice of $\Gamma$; both normalizations appear in the literature and prove useful in different settings, so we keep the dependence on $\Gamma$ in our notation to avoid confusion with the normalization that is independent of $\Gamma$.

The next proposition reduces the problem of bounding the Fourier coefficients of a cusp form to bounding its Petersson norm.

\begin{proposition}
\label{thm::roughupperbound}
Fix $k\in\frac{1}{2}\ZZ$ such that $k\geq 2$. Let $N\geq1$ be an integer such that $4\mid N$ if $k\in\frac{1}{2}\ZZ-\ZZ$ and $\chi$ be a Dirichlet character modulo $N$. Let $f$ be a cusp form in $S_k(\Gamma_0(N),\chi)$. If $k\in\frac{1}{2}\ZZ-\ZZ$, for any real number $\varepsilon>0$, we have
$$
|a_f(n)|\ll_{\varepsilon,k}\lVert f\rVert_{\Gamma_0(N)}N^{\varepsilon} n^{\frac{k}{2}-\frac{1}{4}+\varepsilon}\left(\frac{(n,N)}{N}\right)^{\frac{1}{16}},
$$
for any integer $n\geq0$. If $k\in\ZZ$, for any real number $\varepsilon>0$, we have
$$
|a_f(n)|\ll_{\varepsilon,k}\lVert f\rVert_{\Gamma_0(N)}N^{\frac{1}{2}+\varepsilon}n^{\frac{k}{2}-\frac{1}{2}+\varepsilon}
$$
for any integer $n\geq0$. Moreover, both the implied constants are effective.
\end{proposition}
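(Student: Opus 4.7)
The strategy is to reduce to known bounds on Fourier coefficients of Hecke eigenforms via an expansion against an orthonormal basis. First I fix an orthonormal basis $\{f_1,\ldots,f_d\}$ of $S_k(\Gamma_0(N),\chi)$ with respect to $\langle\cdot,\cdot\rangle_{\Gamma_0(N)}$, chosen to be compatible with the Atkin--Lehner decomposition, so that each basis element is of the form $f_i=\alpha_i\, g_i(d_i\tau)$ for some newform $g_i$ of level $M_i\mid N$ and some divisor $d_i\mid N/M_i$. Writing $f=\sum_i c_i f_i$ with $c_i=\langle f,f_i\rangle_{\Gamma_0(N)}$ and applying the Cauchy--Schwarz inequality,
\[
|a_f(n)|\leq \|f\|_{\Gamma_0(N)}\Bigl(\sum_{i=1}^{d}|a_{f_i}(n)|^2\Bigr)^{1/2},
\]
so the task is reduced to a uniform estimate on $\sum_i|a_{f_i}(n)|^2$.

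The normalization scalar $\alpha_i$ is controlled by $\|g_i\|_{\Gamma_0(M_i)}^{-1}\ll_\varepsilon M_i^{-1/2+\varepsilon}$; in the integral-weight setting this follows from the identity $\|g_i\|_{\Gamma_0(M_i)}^2\asymp M_i\cdot L(1,\mathrm{sym}^2 g_i)$ combined with the Hoffstein--Lockhart lower bound $L(1,\mathrm{sym}^2 g_i)\gg_\varepsilon M_i^{-\varepsilon}$, and an analogous estimate holds in the half-integral-weight setting via the Shimura correspondence together with Goldfeld--Hoffstein--Lieman. Combined with the standard dimension bound $d\leq\dim S_k(\Gamma_0(N),\chi)\ll_k N^{1+\varepsilon}$ and with $a_{f_i}(n)=\alpha_i a_{g_i}(n/d_i)$ (vanishing unless $d_i\mid n$), the problem is reduced to bounding coefficients of individual newforms.

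For integral weight $k\geq 2$, Deligne's theorem gives $|a_{g_i}(n)|\ll_\varepsilon n^{(k-1)/2+\varepsilon}$ for every newform $g_i$. Plugging this in, using the normalization estimate above, and summing over $i$ yields the claimed bound $|a_f(n)|\ll_{\varepsilon,k}\|f\|_{\Gamma_0(N)}N^{1/2+\varepsilon}n^{k/2-1/2+\varepsilon}$. For half-integral weight $k\in\tfrac{1}{2}\ZZ-\ZZ$, I invoke the Blomer--Harcos subconvex bound for Fourier coefficients of half-integral-weight Hecke eigenforms, which after normalization takes the shape
\[
|a_{g_i}(n)|\ll_\varepsilon n^{k/2-1/4+\varepsilon}\bigl((n,M_i)/M_i\bigr)^{1/16};
\]
this is the analytic input responsible for the exponent $\tfrac{1}{16}$ in the final statement.

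The main obstacle is handling the Petersson normalization with care so that the bounds depend uniformly on $N$ rather than on the individual levels $M_i$, and relating the gcd factor $(n,M_i)$ at the newform level to the factor $(n,N)$ appearing in the final statement when passing from $g_i$ to $f_i=g_i(d_i\tau)$. These are technical but routine modifications; the essential limitation in the final exponent comes from the use of subconvexity rather than the (still open) half-integral-weight Ramanujan conjecture.
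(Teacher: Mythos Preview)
The paper's own proof is a two-line citation: the integral-weight case is quoted directly as \cite[Theorem~12]{schulze2018petersson} and the half-integral-weight case as \cite[Theorem~1]{waibel2018fourier}. Your sketch is, in outline, the strategy those references (particularly the first) carry out, so you are not taking a genuinely different route so much as reconstructing the content of the citations.

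That said, several points you label ``routine'' are exactly where the real work in those papers lies. First, the forms $g_i(d_i\tau)$ for varying $d_i$ attached to a fixed newform $g_i$ are \emph{not} mutually orthogonal, so an orthonormal basis compatible with the Atkin--Lehner decomposition does not consist of pure scalar multiples of such forms; one must orthonormalize within each oldclass and control the resulting Gram matrix (its inverse has entries $\ll N^{\varepsilon}$, as in Iwaniec--Luo--Sarnak). Second, your normalization estimate $\alpha_i\ll M_i^{-1/2+\varepsilon}$ should really be stated for the $\Gamma_0(N)$-norm of $g_i(d_i\cdot)$, not the $\Gamma_0(M_i)$-norm of $g_i$, and the dependence on $N$, $M_i$, $d_i$ must be tracked carefully to land on the claimed power of $N$. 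Third, in half-integral weight the newform theory is considerably more delicate than you indicate, and recovering the precise saving factor $((n,N)/N)^{1/16}$ uniformly in $N$ from an eigenform-by-eigenform argument requires genuinely relating $(n/d_i,M_i)/M_i$ back to $(n,N)/N$; this is the substance of Waibel's paper rather than a routine afterthought. None of this invalidates your approach, but a complete proof along your lines would essentially reprove the cited results, which is why the paper simply invokes them.
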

\begin{proof}
If $k\in\ZZ$, it is exactly \cite[Theorem 12]{schulze2018petersson}. If $k\in\frac{1}{2}\ZZ-\ZZ$, it follows from \cite[Theorem 1]{waibel2018fourier}. 
\end{proof}

By Proposition \ref{thm::roughupperbound}, it suffices to bound the Petersson norm of a cusp form. To do so, we adopt the method of Blomer in \cite{blomer2004uniform}. In the paper \cite{blomer2004uniform}, the method was designed for lattices. But it can be generalized to the cases of shifted lattices. Here we imitate the proof of \cite[Lemma 3.2]{kamaraj2022universal}.

\begin{proposition}
\label{thm::peterssonbound}
Let $G$ be the $\psi$-component of the cuspidal part of the theta series associated to an integral positive-definite shifted lattice $X$ of rank $r\geq4$, discriminant $D$, level $N$, and conductor $M$. For any real number $\varepsilon>0$, we have
$$
\lVert G\rVert_{\Gamma_0(M^2N)}\ll_{\varepsilon,r}\frac{(M^2N)^{r+1+\varepsilon}}{M},
$$
where the implied constant is effective.
\end{proposition}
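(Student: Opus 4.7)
The plan is to adapt Blomer's method from \cite{blomer2004uniform} along the lines of \cite[Lemma 3.2]{kamaraj2022universal}, with the additional ingredients needed to handle the conductor $M$ and the $\psi$-component structure. The key idea is to write $G_{X,\psi}$ as a character average over the cuspidal parts of theta series of the shifted lattices $\shiftinverse{X}{d}$ related to $X$ by the shift-inversion operation, and then apply Blomer's Petersson-norm bound to each summand.

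First, using the construction of the $\psi$-component from \cite[Theorem 2.5]{cho2018}, I would express
\[
G_{X,\psi} = \frac{1}{\phi(M)}\sum_{\substack{d\bmod M\\ (d,M)=1}}\overline{\psi(d)}\,G_{\shiftinverse{X}{d}},
\]
where each $G_{\shiftinverse{X}{d}}$ is the cuspidal part of the theta series of a shifted lattice with the same base lattice, discriminant $D$, level $N$, and conductor $M$ as $X$. Applying the triangle inequality on the Petersson norm, and using the standard scaling $\|f\|_{\Gamma'}^2=[\Gamma:\Gamma']\cdot\|f\|_{\Gamma}^2$ for congruence subgroups $\Gamma'\subseteq\Gamma$, I reduce the problem to bounding $\|G_{\shiftinverse{X}{d}}\|_{\Gamma_0(M^2N)\cap\Gamma_1(M)}$ uniformly in the residue class of $d$ modulo $M$.

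Next, for a single shifted lattice $Y=\shiftinverse{X}{d}$, I would write $G_Y = \Theta_Y-E_Y$. Since Eisenstein and cuspidal parts are Petersson-orthogonal, $\|G_Y\|^2 = \langle \Theta_Y - E_Y,\,G_Y\rangle$. Expanding $E_Y$ via the Siegel--Weil formula as a weighted average of theta series $\Theta_{Y_i}$ running over isometry class representatives of $\genus(Y)$, and bounding each Petersson inner product $\langle \Theta_{Y_i},\Theta_{Y_j}\rangle$ by the unfolding technique of Blomer, yields an estimate controlled by the mass of $Y$, the discriminant $D$, and the index $[\SL_2(\ZZ):\Gamma_0(M^2N)\cap\Gamma_1(M)]$. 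The rank assumption $r\geq 4$ ensures that the Eisenstein-type series appearing in the unfolded expression converges.

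Finally, I would combine these ingredients: the index is bounded by a constant multiple of $M^2 N\cdot\phi(M)$, while the mass is controlled by the local-density estimates from Section \ref{sec::Eisenstein} combined with Robin's divisor bound, all of which contribute only $(M^2N)^{\varepsilon}$-type factors. Collecting the powers of $M$ and $N$ (with sub-polynomial contributions absorbed into $\varepsilon$) yields the stated bound $(M^2N)^{r+1+\varepsilon}/M$. The main obstacle is the careful bookkeeping of the exponent of $M$: the character averaging in Step 1 contributes $\phi(M)^{-1}$, the conversion of Petersson norms between $\Gamma_0(M^2N)$ and $\Gamma_0(M^2N)\cap\Gamma_1(M)$ contributes $\phi(M)^{1/2}$, and Blomer's bound itself contributes further $M$-dependence; achieving exactly the single factor $M^{-1}$ in the denominator will likely require exploiting cancellation in the character sum (rather than the crude triangle inequality) at key steps, rather than naive term-by-term estimation.
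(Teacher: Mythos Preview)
Your plan diverges from the paper's argument in its core mechanism, and it carries a gap that you yourself flag but do not close.

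The paper does open with the same $\psi$-component formula from \cite{cho2018}, but it does \emph{not} proceed by bounding each $\lVert G_{\shiftinverse{X}{d}}\rVert$ separately via inner products $\langle\Theta_{Y_i},\Theta_{Y_j}\rangle$. Instead it unfolds the Petersson integral for $G$ over a fundamental domain for $\Gamma(M^2N)$, reducing the problem to bounding the Fourier coefficients of $G$ at \emph{every} cusp of $\Gamma(M^2N)$. Those coefficients are controlled uniformly by applying Shimura's modular transformation law for theta functions together with the Gauss-sum estimate $|\Phi|\leq c^{r/2}\prod_j\gcd(M^2b_j,c)^{1/2}$ from \cite{kamaraj2022universal}, leading to
\[
|a_{Y,\gamma}(n)|\leq\frac{(M^2N)^{r/2}}{M}\,n^{r/2}
\]
at every cusp $\gamma(i\infty)$. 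The single factor $M^{-1}$ in the final bound comes from exactly this step, via the ratio $M^r/(\mu_1\cdots\mu_r)$ that counts shift residues $h\bmod M$ and the subsequent lattice-point count; no character cancellation is used or needed.

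In your outline, by contrast, the triangle inequality over the $\phi(M)$ terms together with the index rescaling between $\Gamma_0(M^2N)$ and $\Gamma_0(M^2N)\cap\Gamma_1(M)$ yields only a saving of $\phi(M)^{-1/2}\approx M^{-1/2}$, as you compute. You then say the missing $M^{-1/2}$ ``will likely require exploiting cancellation in the character sum,'' but provide no mechanism for this; as it stands the argument does not reach the stated exponent. A secondary issue: you invoke the local-density estimates of Section~\ref{sec::Eisenstein} to control the mass, but those estimates are proved only for the specific shifted lattices arising from nodes of $T_\infty$, whereas this proposition is stated for an arbitrary positive-definite integral shifted lattice, so that appeal is not available in the generality required here.
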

\begin{proof}
Let $\mathcal{F}$ denote the usual fundamental domain of $\SL_2(\ZZ)$ acting on the upper half-plane $\HH$. Let $\{\gamma_1,\ldots,\gamma_s\}$ be a set of representatives of the cosets $\Gamma(M^2N)\backslash\SL_2(\ZZ)$. Let $\Gamma_{\infty}$ denote the parabolic subgroup of $\Gamma(M^2N)\backslash \SL_2(\ZZ)$ generated by $T\coloneqq\left(\begin{smallmatrix}1 & 1\\0 & 1\end{smallmatrix}\right)$, namely 
$$
\Gamma_{\infty}\coloneqq\left\{\begin{pmatrix}1 & 0\\0 & 1\end{pmatrix},\begin{pmatrix}1 & 1\\0 & 1\end{pmatrix},\ldots,\begin{pmatrix}1 & M^2N-1\\0 & 1\end{pmatrix}\right\}.
$$
Finally, let $\{\rho_1,\ldots,\rho_t\}$ be a set of representative of the double cosets $\Gamma(M^2N)\backslash\SL_2(\ZZ)\slash \Gamma_{\infty}$.

Set
$$
C_{M,N}\coloneqq\frac{[\SL_2(\ZZ)\colon\Gamma_0(M^2N)]}{[\SL_2(\ZZ)\colon\Gamma(M^2N)]}.
$$
By definition of the Petersson norm, we have
\begin{align*}
\lVert G\rVert_{\Gamma_0(M^2N)}^2=&~C_{M,N}\int_{\Gamma(M^2N)\backslash\HH}|G(\tau)|^2v^{\frac{r}{2}}\frac{\mathrm{d}u\mathrm{d}v}{v^2}=C_{M,N}\sum_{i=1}^{s}\int_{\gamma_i\mathcal{F}}|G(\tau)|^2v^{\frac{r}{2}}\frac{\mathrm{d}u\mathrm{d}v}{v^2}\\
=&~C_{M,N}\sum_{i=1}^{s}\int_{\mathcal{F}}\Big|G|_{k}\gamma_i(\tau)\Big|^2v^{\frac{r}{2}}\frac{\mathrm{d}u\mathrm{d}v}{v^2}=C_{M,N}\sum_{j=1}^{t}\int_{\bigcup_{\gamma\in \Gamma_{\infty}}\gamma\mathcal{F}}\Big|G|_{k}\rho_j(\tau)\Big|^2v^{\frac{r}{2}}\frac{\mathrm{d}u\mathrm{d}v}{v^2},
\end{align*}
where $k\coloneqq\frac{r}{2}$ and we denote
\[
f|_{k}\gamma(\tau)\coloneqq\left(\frac{\Theta(\gamma\tau)}{\Theta(\tau)}\right)^{-2k}f(\gamma\tau).
\]
The cusp associated to the matrix $\rho_j$ is $\rho_j(i\infty)$. Recall that every cusp of $\Gamma(M^2N)$ has cusp width $M^2N$, so the Fourier expansion of $G$ at $\rho_j(i\infty)$ may be written in the form (for some $a_j(n)\in\CC$)
$$
G|_{k} \rho_j(\tau)=\sum_{n=1}^{\infty}a_{j}(n)e^{\frac{2\pi i\tau n}{M^2N}}.
$$
Plugging in the Fourier expansions, we have
\begin{align*}
\int_{\bigcup_{\gamma\in \Gamma_{\infty}}\gamma\mathcal{F}}\Big|G|_{k}\rho_j(\tau)\Big|^2v^{\frac{r}{2}}&\frac{\mathrm{d}u\mathrm{d}v}{v^2}\leq \int_{\frac{\sqrt{3}}{2}}^{\infty} \int_{-\frac{1}{2}}^{M^2N-\frac{1}{2}}\Big|G|_{k}\rho_j(\tau)\Big|^2v^{\frac{r}{2}}\frac{\mathrm{d}u\mathrm{d}v}{v^2}\\
&=\sum_{n=1}^{\infty}\sum_{m=1}^{\infty}a_{j}(n)\overline{a_{j}(m)}\int_{\frac{\sqrt{3}}{2}}^{\infty}e^{-\frac{2\pi (n+m)v}{M^2N}}v^{\frac{r}{2}-2}\int_{-\frac{1}{2}}^{M^2N-\frac{1}{2}}e^{\frac{2\pi i(n-m)u}{M^2N}}\mathrm{d}u\mathrm{d}v\\
&=M^2N\sum_{n=1}^{\infty}|a_{j}(n)|^2\int_{\frac{\sqrt{3}}{2}}^{\infty}e^{-\frac{4\pi nv}{M^2N}}v^{\frac{r}{2}-2}\mathrm{d}v\\
&\ll_{r}(M^2N)^{\frac{r}{2}}\sum_{n=1}^{\infty}|a_j(n)|^2n^{1-\frac{r}{2}}\Gamma\left(\frac{r}{2}-1,\frac{2\sqrt{3}\pi n}{M^2N}\right)\\
&\ll_{r}(M^2N)^{2}\sum_{n=1}^{\infty}\frac{e^{-\frac{2\sqrt{3}\pi n}{M^2N}}}{n}|a_j(n)|^2,
\end{align*}
where $\Gamma(s,y)$ is the incomplte gamma function defined as
$$
\Gamma(s,y)\coloneqq\int_{y}^{\infty}e^{-t}t^{s-1}\mathrm{d}t
$$
for a complex variable $s\in\CC$ and a real variable $y\in\RR$ such that $\real(s)>0$ and $y>0$, and for the last step we use an asympotic upper bound $\Gamma(s,y)\ll_se^{-y}y^{s-1}$ as $y\to\infty$. Therefore, it remains to bound the absolute value of the Fourier coefficients $a_j(n)$ of $G$ at each cusp.

Then, by \cite[Theorem 2.5]{cho2018} and Siegel--Weil formula, we see that the $\psi$-component $G$ is constructed explicitly as follows,
\begin{equation*}
\begin{aligned}
G&=\frac{1}{\phi(M)}\sum_{d\pmod{M}}\overline{\psi(d)}\cdot\overline{\chi_{\Delta}(d)}\cdot G_X|\gamma_d\\
&=\frac{1}{\phi(M)}\sum_{d\pmod{M}}\frac{\overline{\psi(d)}}{\mass(\shiftinverse{X}{d})}\sum_{\class(Y)\subseteq\genus(\shiftinverse{X}{d})}\frac{\Theta_{\shiftinverse{X}{d}}-\Theta_Y}{|O(Y)|},
\end{aligned}
\end{equation*}
where $\gamma_d\in\Gamma_0(M^2N)$ is a matrix whose lower right entry is congruent to $d$ modulo $M$. Thus, by the triangle inequality and the linearity of the slash operator, it suffices to bound the Fourier coefficient of the theta series $\Theta_Y$ at the cusp $\rho_j(i\infty)$ for each class $\class(Y)\subseteq\genus(\shiftinverse{X}{d})$.

For any shifted lattice $Y=K+\mu\in\genus(\shiftinverse{X}{d})$, we are going to estimate the Fourier coefficient $a_j(n)$. There exists a base lattice $L_Y$ of $Y$ such that $Y$ is of level $N$ and conductor $M$ relative to $L_Y$. We choose a basis $\{e_1,\ldots,e_r\}$ of $L_Y$ such that $\{\mu_1e_1,\ldots,\mu_re_r\}$ is a basis of $K$ for integers $\mu_1,\ldots,\mu_r\geq1$ and $\mu=\rho_1e_1+\cdots+\rho_re_r$ with integers $\rho_1,\ldots,\rho_r\in\ZZ$. Let $A$ be the Hessian matrix of the bilinear form $B$ with respect to the basis $\{e_1,\ldots,e_r\}$. We need a modular transformation formula for the shifted lattice $Y$. Using Shimura's formulation of theta series \cite[(2.0)]{shimura1973modular} with the trivial spherical function $P$, we put
$$
\theta(\tau;A,h,N)\coloneqq\sum_{x\equiv h\pmod{N}}e^{2\pi i\tau\frac{x^{\mathsf{t}}Ax}{2N^2}},
$$
where $A$ is a $r\times r$ real symmetric matrix, $h$ is a vector of dimension $r$, and $N$ is a positive integer. The theta series $\Theta_Y$ is related to Shimura's theta series as follows,
\begin{align*}
\Theta_Y(\tau)=&\sum_{x_i\equiv\rho_i\mod{\mu_i}}e^{2\pi i\tau\frac{x^{\mathsf{t}}Ax}{2}}=\sum_{\substack{h\pmod{M}\\h_i\equiv\rho_i\pmod{\mu_i}}}\sum_{x\equiv h\pmod{M}}e^{2\pi i\tau\frac{x^{\mathsf{t}}Ax}{2}}\\
=&\sum_{\substack{h\pmod{M}\\h_i\equiv\rho_i\pmod{\mu_i}}}\theta(\tau;M^2A,MNh,M^2N).
\end{align*}
It is easy to verify that for Shimura's theta series $\theta(\tau;M^2A,MNh,M^2N)$, a stronger assumption than that in \cite[(2.1)]{shimura1973modular} is satisfied since the matrix $M^2A$ is even integral. Under such a stronger assumption, we can imitate the arguments in \cite[Pages 454--455]{shimura1973modular} to see that, for any matrix $\gamma=\left(\begin{smallmatrix}a & b\\c & d\end{smallmatrix}\right)\in\SL_2(\ZZ)$ with $c>0$, we have
$$
\Theta_Y(\gamma(\tau))=\frac{(-i(c\tau+d))^{\frac{r}{2}}}{c^{\frac{r}{2}}M^r\det(A)^{\frac{1}{2}}}\sum_{\substack{h\pmod{M}\\h_i\equiv\rho_i\pmod{\mu_i}}}\sum_{\substack{k\pmod{M^2N}\\Ak\equiv0\pmod{N}}}\Phi(MNh,M^2A,k,M^2N)\theta(\tau;M^2A,k,M^2N),
$$
where $\Phi(h,S,k,m)$ is defined as follows,
$$
\Phi(h,S,k,m)\coloneqq\sum_{\substack{g\pmod{cm}\\g\equiv h\pmod{m}}}e^{2\pi i\left(a\frac{g^{\mathsf{t}}Sg+2k^{\mathsf{t}}Sg+dk^{\mathsf{t}}Sk}{2cm^2}\right)}.
$$
By \cite[Page 15]{kamaraj2022universal}, we have $|\Phi(MNh,M^2A,k,M^2N)|\leq c^{\frac{r}{2}}\prod_{i=1}^{r}\gcd(M^2b_j,c)^{\frac{1}{2}}$, where $b_1,\ldots,b_r\in\ZZ$ are the diagonal entries of the Smith normal form of the Hessian matrix $A$. Hence, plugging it back to the transformation formula of $\Theta_Y$, we have
\begin{equation*}
\begin{aligned}
|a_{Y,\gamma}(n)|&\leq\frac{M^r}{\mu_1\cdots\mu_r}\sum_{\substack{k\pmod{M^2N}\\Ak\equiv0\pmod{N}}}\left|\{x\in\ZZ^r\mid x^{\mathsf{t}}Ax=2Nn,x\equiv k\pmod{M^2N}\}\right|\\
&\leq\frac{M^{r}}{\mu_1\cdots\mu_r}\left|\{x\in\ZZ^r\mid x^{\mathsf{t}}Ax\leq2Nn\}\right|\leq\frac{(M^2N)^{\frac{r}{2}}}{M}n^{\frac{r}{2}},
\end{aligned}
\end{equation*}
where $a_{Y,\gamma}(n)$ is the $n$-th Fourier coefficient of $\Theta_Y$ at the cusp $\gamma(i\infty)$ for a matrix $\gamma\in\SL_2(\ZZ)$. Since the explicit construction of the $\psi$-component $G$ is a weighted average of the theta series $\Theta_Y$, the same bound holds for the Fourier coefficient $a_j(n)$ for every $1\leq j\leq t$. Finally, plugging it back to the Petersson norm of $G$, we obtain
$$
\lVert G\rVert_{\Gamma_0(M^2N)}^2\ll_rC_{M,N}\sum_{j=1}^{t}\frac{(M^2N)^{r+2}}{M^2}\sum_{n=1}^{\infty}e^{-\frac{2\sqrt{3}\pi n}{M^2N}}n^{r-1}\ll_{r,\varepsilon}\frac{(M^2N)^{2r+2+\varepsilon}}{M^2}.
$$
Taking the square root, we obtain the desired result.
\end{proof}

Now we can state the main results of this section.

\begin{theorem}
\label{thm::cuspidalupperbound}
Let $G_X$ be the cuspidal part of the theta series associated to a positive-definite integral shifted lattice $X$ of rank $r\geq4$, level $N$, and conductor $M$. For any real number $\varepsilon>0$ and any integer $n\geq1$, we have the following upper bounds:
\begin{enumerate}[leftmargin=*]
\item If $r$ is odd, then we have
$$
|a_{G_X}(n)|\ll_{\varepsilon,r}\frac{\phi(M)(M^2N)^{r+1+\varepsilon}}{M}n^{\frac{r}{4}-\frac{1}{4}+\varepsilon}.
$$
\item If $r$ is even, then we have
$$
|a_{G_X}(n)|\ll_{\varepsilon,r}\frac{\phi(M)(M^2N)^{r+\frac{3}{2}+\varepsilon}}{M}n^{\frac{r}{4}-\frac{1}{2}+\varepsilon}.
$$
\end{enumerate}
Here the implied constants are effective and $\phi$ denotes Euler's totient function. 
\end{theorem}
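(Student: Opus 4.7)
The plan is to combine the decomposition (\ref{eqn::chodecompostion}) with the Petersson-norm bound of Proposition \ref{thm::peterssonbound} and the Fourier-coefficient bound of Proposition \ref{thm::roughupperbound}. Since each $\psi$-component $G_{X,\psi}$ lies in $S_{r/2}(\Gamma_0(M^2N),\chi\psi)$, I would apply Proposition \ref{thm::roughupperbound} to each $G_{X,\psi}$ with weight $k=r/2$ and level $M^2N$, feed in the Petersson-norm estimate from Proposition \ref{thm::peterssonbound}, and finally use the triangle inequality to sum the resulting bound over the $\phi(M)$ Dirichlet characters modulo $M$.

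When $r$ is odd, the weight $k=r/2$ lies in $\tfrac{1}{2}\ZZ-\ZZ$, so the half-integral weight case of Proposition \ref{thm::roughupperbound}, applied with level $M^2N$, yields
$$
|a_{G_{X,\psi}}(n)|\ll_{\varepsilon,r}\lVert G_{X,\psi}\rVert_{\Gamma_0(M^2N)}\,(M^2N)^{\varepsilon}\,n^{r/4-1/4+\varepsilon},
$$
where I have trivially bounded the factor $((n,M^2N)/(M^2N))^{1/16}\leq 1$. Inserting the bound of Proposition \ref{thm::peterssonbound} then gives $|a_{G_{X,\psi}}(n)|\ll_{\varepsilon,r}(M^2N)^{r+1+\varepsilon}M^{-1}n^{r/4-1/4+\varepsilon}$. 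When $r$ is even, the integral weight case of Proposition \ref{thm::roughupperbound} produces instead $|a_{G_{X,\psi}}(n)|\ll_{\varepsilon,r}\lVert G_{X,\psi}\rVert_{\Gamma_0(M^2N)}(M^2N)^{1/2+\varepsilon}n^{r/4-1/2+\varepsilon}$, which after inserting Proposition \ref{thm::peterssonbound} becomes $|a_{G_{X,\psi}}(n)|\ll_{\varepsilon,r}(M^2N)^{r+3/2+\varepsilon}M^{-1}n^{r/4-1/2+\varepsilon}$.

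To finish, I apply the triangle inequality in (\ref{eqn::chodecompostion}) and sum the uniform bound over the $\phi(M)$ Dirichlet characters $\psi$ modulo $M$, producing the two stated inequalities. There is essentially no substantive obstacle here once Propositions \ref{thm::roughupperbound} and \ref{thm::peterssonbound} are in hand: the only minor bookkeeping points are discarding the $((n,M^2N)/(M^2N))^{1/16}$ factor in the half-integral case (since it is at most $1$) and correctly tracking the weight parity when invoking Proposition \ref{thm::roughupperbound}. Effectiveness of the implied constants is inherited directly from the two input propositions.
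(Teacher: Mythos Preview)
Your proposal is correct and follows exactly the same approach as the paper, which simply states that the theorem follows from Proposition \ref{thm::roughupperbound}, Proposition \ref{thm::peterssonbound}, and \eqref{eqn::chodecompostion}. You have in fact supplied more detail than the paper itself, correctly tracking the exponents in the odd and even cases and noting the harmless discard of the $\bigl((n,M^2N)/(M^2N)\bigr)^{1/16}$ factor.
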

\begin{proof}
It follows from Proposition \ref{thm::roughupperbound}, Proposition \ref{thm::peterssonbound}, and \eqref{eqn::chodecompostion}.
\end{proof}

Applying Theorem \ref{thm::cuspidalupperbound} to the shifted lattice constructed in Definition \ref{thm::defineshiftedlattice}, we obtain the following upper bounds.

\begin{proposition}
\label{thm::nodeupperbound}
Suppose that $F=\sum_{i=1}^{r}a_iP_{m_i}$ is a node of depth $r\geq4$ in the escalator tree $T_{\infty}$. Let $X,\mu,\rho$ be the shifted lattice and the integers constructed in Definition \ref{thm::defineshiftedlattice} corresponding to $F$. Set $\Lambda\coloneqq\lcm(m_1-2,\ldots,m_r-2)$. For any real number $\varepsilon>0$ and any integer $n\geq0$, the following inequalities hold:
\begin{enumerate}[leftmargin=*]
\item If $r$ is odd, then we have
$$
|a_{G_X}(\mu n+\rho)|\ll_{\varepsilon,r}\Big(\prod_{i}a_i\Big)^{r+1+\varepsilon}\Lambda^{3r+3+\varepsilon}(\mu n+\rho)^{\frac{r}{4}-\frac{1}{4}+\varepsilon}.
$$
\item If $r$ is even, we have
$$
|a_{G_X}(\mu n+\rho)|\ll_{\varepsilon,r}\Big(\prod_{i}a_i\Big)^{r+\frac{3}{2}+\varepsilon}\Lambda^{3r+\frac{9}{2}+\varepsilon}(\mu n+\rho)^{\frac{r}{4}-\frac{1}{2}+\varepsilon}.
$$
\end{enumerate}
In both cases, the implied constants are effective.
\end{proposition}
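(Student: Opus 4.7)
The plan is to apply Theorem \ref{thm::cuspidalupperbound} directly to the shifted lattice $X$ and base lattice $L_X$ from Definition \ref{thm::defineshiftedlattice}, after determining the conductor $M$ and the level $N$ in terms of $\Lambda$ and $\prod_i a_i$. The statement will then follow by routine substitution and bookkeeping of exponents.

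First I would read off the conductor. Since $L_X = \bigoplus_{i=1}^r \ZZ e_i$ and $L = \bigoplus_{i=1}^r \ZZ\langle \mu_i e_i\rangle$ with $\mu_i = 2(m_i-2)$, the least positive integer with $M L_X \subseteq L$ is
$$
M = \lcm(\mu_1,\ldots,\mu_r) = 2\lcm(m_1-2,\ldots,m_r-2) = 2\Lambda.
$$
Next I would compute the level. The Gram matrix of $L_X$ being diagonal with (integer) entries $\alpha_i = a_i\Lambda/(m_i-2)$, one checks that
$$
L_X^{\dual} = \bigoplus_{i=1}^r \tfrac{1}{2\alpha_i}\ZZ e_i,
$$
so a generic element $v = \sum_i c_i e_i/(2\alpha_i)$ with $c_i\in\ZZ$ has $Q(v) = \sum_i c_i^2/(4\alpha_i)$. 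Hence $NQ(v)\in\ZZ$ for all such $v$ iff $4\alpha_i \mid N$ for every $i$, giving $N = 4\lcm(\alpha_1,\ldots,\alpha_r)$. Using $\alpha_i \mid a_i\Lambda$, this yields the clean bound
$$
N \leq 4\Lambda\cdot\lcm(a_1,\ldots,a_r) \leq 4\Lambda\prod_{i=1}^r a_i.
$$

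Finally I would substitute $M = 2\Lambda$ and $N \ll \Lambda\prod_i a_i$ into Theorem \ref{thm::cuspidalupperbound}, which requires the cusp form to live on $\Gamma_0(M^2 N)$ with the right conductor; our shifted lattice $X$ was set up so that $G_X$ decomposes as in \eqref{eqn::chodecompostion} and each $\psi$-component satisfies the hypotheses of that theorem. One has $M^2 N \ll \Lambda^3\prod_i a_i$ and $\phi(M)/M \leq 1$. For odd $r$ this gives
$$
(M^2N)^{r+1+\varepsilon} \ll \Lambda^{3r+3+\varepsilon}\Bigl(\prod_i a_i\Bigr)^{r+1+\varepsilon},
$$
and the variable $n$ of Theorem \ref{thm::cuspidalupperbound} is played by $\mu n+\rho$, yielding claim (1). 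For even $r$ one similarly gets $(M^2N)^{r+3/2+\varepsilon} \ll \Lambda^{3r+9/2+\varepsilon}(\prod_i a_i)^{r+3/2+\varepsilon}$, giving claim (2).

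There is no real obstacle here beyond bookkeeping: all the analytic and modular-forms input is already contained in Theorem \ref{thm::cuspidalupperbound}, and the only mildly subtle point is verifying the level computation $N = 4\lcm(\alpha_i)$ via the explicit description of $L_X^{\dual}$. The effectivity of the implied constants is inherited directly from that of Theorem \ref{thm::cuspidalupperbound}.
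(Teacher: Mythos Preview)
Your proposal is correct and follows essentially the same approach as the paper: bound the conductor by $M\leq 2\Lambda$ and the level by $N\leq 4\Lambda\prod_i a_i$, then substitute into Theorem \ref{thm::cuspidalupperbound}. You supply more detail than the paper (computing $M=2\Lambda$ and $N=4\lcm(\alpha_i)$ exactly via the explicit dual lattice), but the argument is the same.
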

\begin{proof}
Suppose that the level and the conductor of the shifted lattice $X$ are denoted by $N$ and $M$, respectively. The conductor $M$ is bounded above by $2\Lambda$ and the level $N$ is bounded above by $4\Lambda\prod_ia_i$. Plugging them into Theorem \ref{thm::cuspidalupperbound}, we obtain the desired bounds.
\end{proof}

\section{Proof of the Main Results}\label{sec::main}

For any integers $\mathfrak{M}\geq\mathfrak{m}-2\geq1$, the existence of the constants $\Constant_{\mathfrak{M}}$ and $\Constant_{\mathfrak{m},\mathfrak{M}}$ follows from the following theorem.

\begin{theorem}
\label{thm::finitetree}
For any integer $\mathfrak{M}\geq1$, the tree $T_{\mathfrak{M}}$ is finite.
\end{theorem}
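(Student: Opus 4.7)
The plan is to establish two properties: (i) every node of $T_{\mathfrak{M}}$ has only finitely many children, and (ii) every branch has bounded depth. Together these force $T_{\mathfrak{M}}$ to be finite. Property (i) is immediate from the construction, since a non-leaf node $F$ of depth $r$ has children indexed by pairs $(a_{r+1},m_{r+1})$ with $a_r\le a_{r+1}\le t(F)<\infty$ and $3\le m_{r+1}\le \mathfrak{M}+2$, each ranging over a finite set.

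For (ii), the key tool is the following subsum inequality: if $F'$ is obtained from $F$ by dropping some summands, then $r_F(n)\ge r_{F'}(n)$, achieved by setting the omitted variables to $0$ (valid since $P_m(0)=0$). First, Propositions~\ref{thm::truant2}, \ref{thm::truant3}, and~\ref{thm::truant4} together bound the truants of all non-leaf nodes at depth at most $4$ in $T_{\mathfrak{M}}$, where the potentially universal exceptional nodes are either leaves or carry finite (if ineffective) truants. Combined with $a_i\le t(F_{i-1})$ and $m_i\le \mathfrak{M}+2$, this forces the set of depth-$5$ nodes in $T_{\mathfrak{M}}$ to be finite. For each such $F_5$, Proposition~\ref{thm::nodelowerbound} yields $a_{E_X}(\mu n+\rho)\gg n^{3/2}$ while Proposition~\ref{thm::nodeupperbound} yields $|a_{G_X}(\mu n+\rho)|\ll n^{1+\varepsilon}$; since $\tfrac{3}{2}>1+\varepsilon$, there is a threshold $N(F_5)<\infty$ with $r_{F_5}(n)>0$ for all $n>N(F_5)$. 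Setting $B\coloneqq\max_{F_5}N(F_5)$, a finite maximum over a finite set, I conclude that every depth-$5$ node in $T_{\mathfrak{M}}$ represents every integer exceeding $B$.

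For any node $F_r$ of depth $r\ge 5$ with depth-$5$ ancestor $F_5$, the subsum inequality gives $r_{F_r}(n)\ge r_{F_5}(n)>0$ whenever $n>B$, so the set $\mathcal{N}(F_r)$ of positive integers not represented by $F_r$ is contained in $\{1,\ldots,B\}$. Moreover, the escalator construction forces $t(F_r)\in\mathcal{N}(F_r)\setminus\mathcal{N}(F_{r+1})$: writing $t(F_r)=(t(F_r)-a_{r+1})+a_{r+1}P_{m_{r+1}}(1)$ with $0\le t(F_r)-a_{r+1}<t(F_r)$, the first summand is represented by $F_r$, hence $t(F_r)$ is represented by $F_{r+1}$. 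Therefore $|\mathcal{N}(F_{r+1})|<|\mathcal{N}(F_r)|$ from depth $5$ onwards, so every branch terminates by depth at most $5+B$. The main obstacle is the ineffectiveness introduced by the $3052$ potentially universal ternary sums in Proposition~\ref{thm::truant3}, which renders $B$ ineffective; however, only the existence (not the effectivity) of a finite bound on truants is needed here, so the argument still goes through.
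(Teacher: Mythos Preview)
Your proposal is correct and follows essentially the same approach as the paper: finitely many nodes at each depth, analytic bounds (Propositions~\ref{thm::nodelowerbound} and~\ref{thm::nodeupperbound}) at depth $5$ to get a uniform threshold beyond which everything is represented, and then the observation that each child represents at least one more integer than its parent to force bounded depth. Your write-up is more explicit in two places---you spell out the subsum inequality $r_{F_r}(n)\ge r_{F_5}(n)$ and you exhibit the concrete representation $t(F_r)=(t(F_r)-a_{r+1})+a_{r+1}P_{m_{r+1}}(1)$---but these are exactly the ingredients the paper's one-line ``any child node represents at least one more integer than its parent'' is summarizing. One minor remark: your appeal to Propositions~\ref{thm::truant2}, \ref{thm::truant3}, \ref{thm::truant4} to conclude there are finitely many depth-$5$ nodes is not needed; the paper obtains this by the simpler inductive observation that $a_{r+1}\le t(F_r)$ and $m_{r+1}\le\mathfrak{M}+2$ already bound the parameters at every depth, so each level is finite without invoking those explicit truant computations.
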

\begin{proof}
By induction on the depth, we see that the parameters $a_i$ and $m_i$ are bounded for any $i\geq1$ and there are finitely many nodes of a fixed depth in $T_{\mathfrak{M}}$. It suffices to show that the depth of the tree is finite. Using Proposition \ref{thm::nodelowerbound} and Proposition \ref{thm::nodeupperbound} to compare the Eisenstein part and the cuspidal part in (\ref{eqn::splitting}), the integers not represented by a node of depth $5$ is bounded below by a uniform constant only depending on $\mathfrak{M}$. Since any child node represents at least one more integer than its parent node, the depth of every subtree rooted at a node of depth $5$ must be finite. This implies that the depth of the tree is finite.
\end{proof}

Now we prove the main theorems.

\begin{proof}[Proof of Theorem \ref{thm::mainfinitenesstheorem1} and Theorem \ref{thm::mainfinitenesstheorem2}]
The existence of the constant $\Constant_{\mathfrak{M}}$ follows from Theorem \ref{thm::finitetree}. When $\mathfrak{M}=1$, the calculation in \cite{bosma2013triangular} reveals that $\Constant_{\mathfrak{M}}=8$. For the upper bound on $\Constant_{\mathfrak{M}}$, by Proposition \ref{thm::truant2}, Proposition \ref{thm::truant3}, and Proposition \ref{thm::truant4}, we can bound the truants of the non-leaf nodes of depth $r\leq4$ in $T$ by an implicit constant. So we arrive at depth $5$. The parameters $a_1,\ldots,a_5$ are also bounded by this implicit constant. Thus, by comparing the bounds in Proposition \ref{thm::nodelowerbound} and Proposition \ref{thm::nodeupperbound}, we see that any node of depth $5$ in $T_{\mathfrak{M}}$ represents every positive integer $n$ such that
$$
n\gg_{\varepsilon}\mathfrak{M}^{43+\varepsilon}.
$$
This implies that the truant of any non-leaf node of depth $5$ in $T_{\mathfrak{M}}$ is bounded above by the right hand side. Since any non-leaf node of depth $r\geq6$ will represent more integers than its ancestor of depth $5$, its truant must satisfy the same bound. This finishes the proof of Theorem \ref{thm::mainfinitenesstheorem1}. The proof of Theorem \ref{thm::mainfinitenesstheorem2} is clear from the above proof, because the ineffectiveness is due to the implicit upper bound for the truants of nodes of depth $r\leq4$.
\end{proof}

\begin{proof}[Proof of Theorem \ref{thm::mainfinitenesstheorem3}]
Since we have $\Constant_{\mathfrak{m},\mathfrak{M}}\leq\Constant_{\mathfrak{M}}$, the existence of the constant $\Constant_{\mathfrak{m},\mathfrak{M}}$ is clear for any integers $\mathfrak{m},\mathfrak{M}$ such that $\mathfrak{M}\geq\mathfrak{m}-2\geq1$. If $\mathfrak{m}\geq20$, any node of depth $r\leq4$ with $m_i\geq\mathfrak{m}$ for $1\leq i\leq r$ can not represent $16$. Thus the constant $\Constant_{\mathfrak{m},\mathfrak{M}}$ is effective and it satisfies the same bound in Theorem \ref{thm::mainfinitenesstheorem1} with an effective implied constant under the assumption. If $\mathfrak{m}\geq36$, any node of depth $r\leq5$ with $m_i\geq\mathfrak{m}$ for $1\leq i\leq r$ can not represent $32$. For nodes of depth $r=6$, we can use a stronger upper bound for the cuspidal part when $r$ is even. Therefore we have
$$
\Constant_{\mathfrak{m},\mathfrak{M}}\ll_{\varepsilon}\mathfrak{M}^{27+\varepsilon},
$$
as desired.
\end{proof}

\begin{proof}[Proof of Theorem \ref{thm::ternaryuniversal}]
The first statement follows from Proposition \ref{thm::truant3} and a computer search. For the second statement, there are $125$ cases that are proved to be universal. For the reference, see Table \ref{tbl::confirmedcases}. For the remaining $72$ cases, it is not hard to show that the genus of its corresponding shifted lattice contains only one class. Hence the cuspidal part is trivial by Siegel-Weil formula and the universality can be verified easily via local computations.
\end{proof}

\bibliographystyle{plain}
\bibliography{reference}

\end{document}